\newcommand{\Z}{{\mathbb {Z}}} 
\newcommand{\R}{{\mathbb {R}}} 
\newcommand{\C}{{\mathbb {C}}} 
\newcommand{\Q}{{\mathbb {Q}}} 
\newcommand{\cat}{{\sf {cat}}} 
\newcommand{\secat}{{\sf {secat}}} 
\newcommand{\tc}{{\sf {TC}}} 
\newcommand{\zcl}{{\sf {zcl}}} 
\theoremstyle{plain}
\newtheorem{theorem}{Theorem}[section]
\newtheorem{proposition}[theorem]{Proposition}
\newtheorem{lemma}[theorem]{Lemma}
\newtheorem{cor}[theorem]{Corollary}  
\theoremstyle{definition}
\newtheorem{definition}[theorem]{Definition}
\newtheorem{question}[theorem]{Question}
\theoremstyle{remark}
\newtheorem{remark}[theorem]{Remark}
\newtheorem{example}[theorem]{Example}
\numberwithin{equation}{section}
\thanks{The author thanks Michael Farber and Jes\'{u}s Gonz\'{a}lez for the invitation to contribute this survey to a forthcoming book project  as well as Arturo Espinosa Baro, Maximilian Stegemeyer and the anonymous reviewer for helpful comments on an earlier draft of the manuscript.}
\begin{document}
\title[Geometric and topological properties of manifolds in robot motion planning]{Geometric and topological properties of manifolds in robot motion planning}
\author{Stephan Mescher}
\address{Institut f\"ur Mathematik \\ Martin-Luther-Universit\"at Halle-Wittenberg \\ Theodor-Lieser-Strasse 5 \\ 06120 Halle (Saale) \\ Germany}
\email{stephan.mescher@mathematik.uni-halle.de}
\maketitle

\begin{abstract}
	Manifolds occur naturally as configuration spaces of robotic systems. They provide global descriptions of local coordinate systems that are common tools in expressing positions of robots. The purpose of this survey is threefold. Firstly, we present an overview over various results on topological complexities of manifolds and related topics. Several constructions for manifolds, e.g. symplectic structures and connected sums, can be used to compute or estimate topological complexities. Secondly, we take a look at geodesic motion planning in Riemannian manifolds. In this setting, results from Riemannian geometry are employed to estimate the complexity of motion planning along shortest paths in manifolds. Thirdly, we will discuss results on connections between critical point theory and the topology of manifolds that are related to motion planning problems. Here, we consider the role of navigation functions for topological complexity and outline their relations to newer numerical homotopy invariants, namely spherical complexities. \end{abstract}



\section*{Introduction}

In the mathematical treatment of robotics, the first important task is to find a mathematical representation of the configuration space of the robotic system. This task usually amounts to finding a suitable set of generalized coordinates, i.e. real-valued parameters, which uniquely determine the state of the system. To find the minimal number of coordinates required by the state of the system, one determines the number of degrees of freedom, call it $k$, of the mechanic system via Gr\"ubler's formula, see \cite[Proposition 2.2.2]{LynchPark}. Then one establishes $k$ real-valued coordinates that determine the positions of the links of the system at least locally, e.g. the length of a prismatic joint, the angle of rotation of a crank or other observable quantities. An elementary overview of this approach is given in \cite[Chapter 2]{LynchPark}. In many situations, the local coordinates can be assembled in such a way that the configuration space or at least a large part of it forms a smooth manifold. A basic example is given by a planar linkage consisting of $n$ links such that each is connected to the next one by a crank. Assuming there are no obstructions and that all bars can be rotated freely, the configuration space of this system is given by the $n$-torus $T^n=(S^1)^n$, which is a closed $n$-dimensional manifold. Here, we represent the linkage by the $n$ rotational angles of the cranks. In general configuration spaces will have singularities, which usually occur in the form of critical points of the maps which encode the constraints of the system. One might still work one's way around these singularities by just considering the \emph{regular} configuration space, i.e. the complement of the singularities, and make use of the manifold structure of the complement. See e.g. \cite{Pfalzgraf} for a discussion of related questions or \cite{HaugPavesic} for a recent approach in this direction.

Surprisingly, it has been shown by M. Kapovich and J. Millson that \emph{every} manifold can occur in the study of topological robotics: in \cite[Corollary C]{KapoMillson} it is shown that for \emph{any} closed smooth manifold $M$ there exists a planar linkage whose configuration space is given by a disjoint union of copies of $M$. This emphasizes the connections between topological robotics and the geometry and topology of manifolds, which gives reasons to study connections between the two fields.\medskip 

Motivated by these observations, we want to present topics of current research in the geometry and topology of motion planning in smooth manifolds. Our presentation is divided into three sections, which can be studied mostly independently from one another. Simultaneously, we want to give an overview of the literature and of further results that are not elaborated upon here. A reader who is not too familiar with smooth manifolds is recommended to consider \cite{LeeSmooth} or a comparable textbook to look up the basic facts. We further assume familiarity with basic results on topological complexity.\medskip 

In the first section of this article, we survey results on the topological complexity of manifolds, focussing on those cases in which explicit use of their topological properties is made. After considering frame bundles as models for work spaces in robotics, we discuss how cohomology rings of manifolds can be employed to obtain lower bounds on topological complexity. Here, results like Poincar\'{e} duality that are peculiar to manifolds come in handy in various ways. In Section 2 we discuss how one can model motion planning tasks along paths of shortest length in Riemannian manifolds. More precisely, we outline some constructions and results on geodesic complexity, which is defined in the more general context of metric spaces. In the case of a Riemannian manifold the behaviour of geodesics can to a large extent be observed from its tangent bundle - another notion that is peculiar to smooth manifolds. Afterwards, we study connections between critical points of differentiable functions and manifold topology in the context of topological motion planning in Section 3. We first elaborate upon navigation functions, which have been introduced by M. Farber as a method to obtain upper bounds on the complexity of motion planning in a closed manifold and to construct explicit motion planners from gradient flows. Afterwards, we discuss how a similar approach can be used in Riemannian geometry to tackle the closed geodesics problem via so-called spherical complexities that have recently been introduced by the author.

\section{Topological complexity of manifolds}

 
An elementary upper bound on the topological complexity (TC) of a manifold has already been shown by M. Farber in his seminal article \cite{FarberTC}. The TC of an $r$-connected manifold $M$, where $r \in \mathbb{N}_0$, satisfies 
\begin{equation}
\label{EqUpperTCdimconn}
\tc(M) \leq \frac{2\dim M}{r+1}.
\end{equation}
 Most \emph{lower} bounds on topological complexity, or more generally on sectional categories of fibrations, are derived from properties of cohomology rings of the spaces involved. A well-known lower bound for TC is the one by the zero-divisor cup length of the space, see \cite[Theorem 7]{FarberTC} or \cite[Corollary 4.40]{FarberBook}.
 
We will discuss in this section how these established bounds can be improved in one way or another if the space is a smooth manifold, where in several situations, topological properties of manifolds like Poincar\'{e} duality will come in handy.

\subsection{Lower bounds via cohomology} Before considering more elaborate constructions, we want to show the reader an explicit example of how cohomological properties can be used to estimate TC from below. For this purpose, we include the following computation of the TC of certain homogeneous manifolds, which slightly generalizes the one of the topological complexity of simply connected closed symplectic manifolds from \cite[Lemma 28.1]{FarberSurveyTC}. To the author's best knowledge, its assertion is new to the literature.

\begin{theorem} 
\label{ThmTCGT}
Let $G$ be a compact connected Lie group and let $T\subset G$ be a maximal torus. Then 
$$\tc(G/T) =\dim G - \mathrm{rk}\, G \ , $$
where $\mathrm{rk}\, G=\dim T $ denotes the rank of $G$, i.e. the dimension of any maximal torus of $G$.
\end{theorem}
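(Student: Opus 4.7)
The plan is to identify $G/T$ as a simply connected closed symplectic manifold of dimension $\dim G - \mathrm{rk}\,G$, so that the equality $\tc(M) = \dim M$ established for this class of manifolds in \cite[Lemma 28.1]{FarberSurveyTC} applies directly. Since $\dim T = \mathrm{rk}\,G$, the dimension formula is immediate, and this number is automatically even, being twice the number of positive roots of $(G,T)$.

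Simple connectedness would follow from the homotopy long exact sequence of the principal $T$-bundle $T \to G \to G/T$: as $T$ is connected and the inclusion $T \hookrightarrow G$ induces a surjection on $\pi_1$ for every compact connected Lie group, one reads off $\pi_1(G/T)=0$. For the symplectic structure I would realise $G/T$ as the coadjoint orbit of a regular element $\xi \in \mathfrak{t}^* \subset \mathfrak{g}^*$ and endow it with the Kirillov--Kostant--Souriau form $\omega$; alternatively, the identification $G/T \cong G_\mathbb{C}/B$ with a complex flag variety produces a K\"ahler form serving the same purpose.

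With these structural properties in place, the upper bound $\tc(G/T) \leq \dim G - \mathrm{rk}\,G$ is an instance of (\ref{EqUpperTCdimconn}) with $r=1$. Setting $n := (\dim G - \mathrm{rk}\,G)/2$, I would deduce the matching lower bound from the zero-divisor
$$A := 1 \otimes \omega - \omega \otimes 1 \in H^2(G/T \times G/T;\mathbb{R}).$$
Since both summands lie in even degree they commute, and in the binomial expansion of $A^{2n}$ every term $\omega^k \otimes \omega^{2n-k}$ with $k \neq n$ vanishes for dimension reasons, leaving only $\binom{2n}{n}(-1)^n\,\omega^n \otimes \omega^n$, which is nonzero by the non-degeneracy of $\omega$. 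Hence $\zcl(G/T) \geq 2n$, forcing $\tc(G/T) \geq \dim G - \mathrm{rk}\,G$ and thereby the asserted equality. The only conceptually non-trivial step in all of this is the existence of the symplectic form on $G/T$, which however is a classical feature of generalised flag varieties; the rest is the standard symplectic zero-divisor cup length argument applied to this particular family.
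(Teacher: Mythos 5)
Your argument is correct, and it reaches both bounds in a way that is parallel to, but not identical with, the paper's proof. The upper bound is the same in both cases: simple connectedness of $G/T$ plus the estimate \eqref{EqUpperTCdimconn} with $r=1$ (your fibration argument $T\to G\to G/T$, using surjectivity of $\pi_1(T)\to\pi_1(G)$, is a legitimate substitute for the citation to \cite{HallLie}). The difference lies in how the lower bound is fed into the zero-divisor cup-length machine. You equip $G/T$ with a symplectic (indeed K\"ahler) form via a regular coadjoint orbit or the identification with $G_{\C}/B$, and then expand $\bar{\omega}^{2n}$ to get $\zcl_{\R}(G/T)\geq \dim(G/T)$; this is exactly the argument of \cite[Lemma 28.1]{FarberSurveyTC}, so in effect you reduce the theorem to the symplectic special case that the paper states it slightly generalizes. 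The paper instead avoids any geometric structure: it quotes Singhof's estimate ${\sf cl}_{\R}(G/T)\geq \frac12\dim(G/T)$ together with Borel's theorem that $H^*(G/T;\R)$ is concentrated in even degrees, and then squares the zero-divisors of an arbitrary maximal-length product $u_1\cup\dots\cup u_n$ of even-degree classes. Your route buys concreteness (a single degree-two class $[\omega]$ whose top power is a volume class, so the surviving binomial term is transparent) at the cost of importing the classical symplectic geometry of flag manifolds; the paper's route is purely cohomological and exhibits the more general principle that any space with even-degree cohomology and cup length at least half its dimension has maximal zero-divisor cup length. Two small points worth making explicit in your write-up: the stabilizer of the chosen $\xi\in\mathfrak{t}^*$ must be exactly $T$, which requires $\xi$ regular, and the nonvanishing of $\omega^n\times\omega^n$ uses the K\"unneth theorem over $\R$; both are standard but should be said.
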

\begin{proof}
As shown e.g. in \cite[Proposition 13.36]{HallLie}, the quotient space $G/T$ is simply connected. Thus, the upper bound \eqref{EqUpperTCdimconn} yields
\begin{equation}
\label{TCleq}
\tc(G/T) \leq \dim (G/T)= \dim G - \mathrm{rk}\, G .
\end{equation}
To obtain the same number as a lower bound on $\tc(G/T)$, we observe from \cite[p. 114]{SinghofLieI} that 
\begin{equation}
\label{clineq}
{\sf {cl}}_{\R}(G/T) \geq \frac12 \dim (G/T)\ ,
\end{equation} 
where ${\sf {cl}}_{\R}$ denotes the cup length of $G/T$ with real coefficients. By a classical theorem of Borel, see \cite[Lemma 26.1]{Borel}, the cohomology groups $H^*(G/T;\R)$ are concentrated in even degrees.

Put $n:= {\sf {cl}}_{\R}(G/T)$ and let $u_1,\dots,u_n \in \widetilde{H}^*(G/T;\R)$ be given with $$u_1\cup u_2\cup \dots \cup u_n \neq 0.$$ For each $i \in \{1,2,\dots,n\}$ we let $\bar{u}_i \in H^*(G/T\times G/T;\R)$, 
$$\bar{u}_i := 1 \times u_i - u_i \times 1,$$
be its associated zero-divisor. Since $u_i$ is of even degree, it holds that 
$$\bar{u}_i^2 = 1 \times u_i^2 - 2 u_i \times u_i + u_i^2 \times 1. $$
From this computation, we observe that 
$$\bar{u}_1^2 \cup \dots \cup \bar{u}_n^2 = (-2)^n (-1)^n (u_1 \cup u_2 \cup \dots \cup u_n) \times (u_1 \cup u_2 \cup \dots \cup u_n) \neq 0, $$
since all other summands that occur in the product will have cup products of at least $n+1$ elements of $\widetilde{H}^*(G/T;\R)$ in one of its factors. We derive that
$$\zcl_{\R}(G/T) \geq 2n \stackrel{\eqref{clineq}}{\geq} \dim (G/T) \quad \Rightarrow \quad \tc(G/T) \geq \dim G/T.$$
Combining this observation with \eqref{TCleq} shows that
$$\tc(G/T)=  \dim (G/T)= \dim G - \dim T = \dim G - \mathrm{rk}\, G \ . $$
\end{proof}

The proof of the previous theorem shows how knowledge about the cohomology ring of a space  can be employed to derive lower bounds on topological complexity. However, there are more sophisticated ways of making use of cohomological properties, in particular if the space under investigation is a manifold. 

In \cite{CostaFarber}, A. Costa and M. Farber have singled out a certain cohomology class of cell complexes, henceforth called \emph{canonical class},  which plays a particularly important role in the study of TC. Given a cell complex $X$ with fundamental group by $\pi:=\pi_1(X)$, it is given as a class $\mathfrak{v}_X \in H^1(X\times X;I)$, where $I$ denotes the augmentation ideal in the integer group ring $\Z[\pi]$, seen as a left $\Z[\pi\times\pi]$-module via the $(\pi \times \pi)$-action given by $(g,h)\cdot x= gxh^{-1}$. 

This class can be used to obtain lower bounds for TC. Among other results on canonical classes, Costa and Farber have shown in \cite[Theorem 7]{CostaFarber} that for an $n$-dimensional finite cell complex, it holds that
\begin{equation}
\label{EqTCmaxequiv}
\tc(X)=2n \quad \Leftrightarrow \quad \mathfrak{v}^{2n}_X \neq 0 \in H^{2n}(X\times X;I^{\otimes 2n}),
\end{equation}
where $\mathfrak{v}^{2n}_X$ denotes the $2n$-fold cup product of $\mathfrak{v}_X$ with itself.

 It is known from classical Morse theory, see \cite{MilMorse}, that each closed manifold has the homotopy type of a finite cell complex. Thus, in the case that $X$ is a closed manifold, we can apply \eqref{EqTCmaxequiv} and derive a more explicit criterion on the non-vanishing of $\mathfrak{v}^{2n}_X$ by means of Poincar\'{e} duality. 
 \begin{theorem}
\label{Theoremvv2n}
Let $M$ be a closed $n$-dimensional manifold, where $n \in \mathbb{N}$. Then $\tc(M)=2n$ if and only if 
$$ \mathfrak{v}_M^{2n} \cap [M\times M] \neq 0 \in H_0(M\times M;I^{\otimes n}\otimes o_{M \times M}),$$
where $o_{M \times M}$ denotes the orientation bundle of $M \times M$.
 \end{theorem}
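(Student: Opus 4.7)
The plan is to combine the criterion \eqref{EqTCmaxequiv} with twisted Poincar\'e duality applied to the closed $2n$-dimensional manifold $M\times M$. Since a closed manifold has the homotopy type of a finite CW complex, and $M$ is of dimension $n$, the criterion \eqref{EqTCmaxequiv} applies and reduces the assertion $\tc(M)=2n$ to the non-vanishing of the class $\mathfrak{v}_M^{2n}\in H^{2n}(M\times M;I^{\otimes 2n})$. The task is therefore to translate this cohomological non-vanishing statement into the homological statement about the cap product with $[M\times M]$.

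The key observation is that $M\times M$ is itself a closed manifold of dimension $2n$, possibly non-orientable even when $M$ is. Hence there is a twisted fundamental class $[M\times M]\in H_{2n}(M\times M;o_{M\times M})$, and for any local coefficient system $\mathcal{L}$ on $M\times M$, Poincar\'e duality provides an isomorphism
\begin{equation*}
H^{2n}(M\times M;\mathcal{L})\;\xrightarrow{\;\cdot\,\cap\,[M\times M]\;}\;H_{0}(M\times M;\mathcal{L}\otimes o_{M\times M}).
\end{equation*}
Applying this with $\mathcal{L}=I^{\otimes 2n}$, viewed as a local system on $M\times M$ via the $\mathbb{Z}[\pi_1(M)\times\pi_1(M)]$-action on $I$ described by Costa and Farber, we conclude that $\mathfrak{v}_M^{2n}\neq 0$ in cohomology if and only if $\mathfrak{v}_M^{2n}\cap[M\times M]\neq 0$ in $H_0(M\times M;I^{\otimes 2n}\otimes o_{M\times M})$. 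Combined with the equivalence \eqref{EqTCmaxequiv}, this gives the claim.

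The main technical point to verify, and essentially the only real step beyond bookkeeping, is the validity of Poincar\'e duality in this twisted setting. One must check that the general version of Poincar\'e duality for closed manifolds with arbitrary local coefficients applies to $I^{\otimes 2n}$, which follows from the standard formulation once one recognizes that the twist by $o_{M\times M}$ is precisely what makes the statement coefficient-free with respect to orientability hypotheses on $M$. No further structure on $M$ is needed, so this verification is the expected obstacle rather than any genuinely delicate argument.
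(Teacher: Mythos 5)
Your argument is exactly the paper's: apply the Costa--Farber criterion \eqref{EqTCmaxequiv} (valid since a closed manifold has the homotopy type of a finite complex) and then translate the non-vanishing of $\mathfrak{v}_M^{2n}$ via Poincar\'e duality with local coefficients on the closed $2n$-manifold $M\times M$, capping with the twisted fundamental class $[M\times M]\in H_{2n}(M\times M;o_{M\times M})$. Note that your target group $H_0(M\times M;I^{\otimes 2n}\otimes o_{M\times M})$ is the one dictated by duality, so the exponent $I^{\otimes n}$ in the stated theorem appears to be a typographical slip rather than a discrepancy in your argument.
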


Thus, the question if the TC of a manifold is maximal reduces to the question if a certain degree-zero homology class vanishes or not. This observation was used by D. Cohen and L. Vandembroucq in several ways and the interested reader is referred to L. Vandembroucq's contribution to this volume, in which some ingenious computations are carried out. Here, we only want to mention that Cohen and Vandembroucq have computed the TC of the Klein bottle $K$ in \cite{CVKlein} as $\tc(K)=2 \dim K = 4$ and have shown in \cite{CVabelian} that $\tc(M)<2 \dim M$ in various cases of manifolds with abelian fundamental groups.  \medskip

We want to focus on a different cohomological technique for lower bounds on TC. We recall that TC is obtained as a special case of the notion of \emph{sectional category of a fibration}. Here, the sectional category of a fibration $p:E \to B$, denoted by $\secat(p\colon E\to B)$, or simply $\secat(p)$, is the minimal value of $r \in \mathbb{N}$ for which $B$ admits an open cover $U_0,U_1,\dots,U_r$, such that $p$ admits a continuous local section over each of the $U_i$. For any path-connected space $X$, we let $PX=C^0([0,1],X)$ be its path space and consider the path fibration $$\pi: PX \to X \times X, \qquad \pi(\gamma)=(\gamma(0),\gamma(1)).$$ 
Then  
$$\tc(X)=\secat(\pi\colon PX \to X \times X).$$
In \cite{FarberGrantSymm}, M. Farber and M. Grant have introduced the notion of \emph{sectional category weight} of a cohomology class of a fibration. It generalizes notions of category weight introduced by E. Fadell and S. Husseini in \cite{FadellHusseini} and by Y. Rudyak in \cite{RudyakWeight}. We give a definition that is slightly different from the one of Farber and Grant, but which is shown in \cite{FarberGrantSymm} to coincide with the original definition. We recall that the fiberwise join of two fibrations $p_1\colon E_1 \to B$ and $p_2\colon E_2 \to B$ over the same base space is another fibration $p_1 *_f p_2: E_1 *_f E_2 \to B$, such that the fiber of $p_1 *_f p_2$ has the homotopy type of the join of the fibers of $p_1$ and $p_2$, see e.g. \cite[p. 11]{CrabbJames}. 
\begin{definition}
Given a fibration $p\colon E \to B$, we let $p_k\colon E_k \to B$, denote the $k$-fold fiberwise join of $p$ with itself for each $k \in \mathbb{N}$. Given $u \in \widetilde{H}^*(B;A)$ with $u \neq 0$, where $A$ is an abelian group, the \emph{sectional category weight of $u$ with respect to $p$} is given by 
$$\mathrm{wgt}_p(u):= \sup \{ k \in \mathbb{N} \ | \ p_k^*(u) =0\}. $$
\end{definition}
The following theorem summarizes the most important properties of sectional category weights. For simplicity, we restrict the result to coefficient groups and only want to mention that the whole setting of sectional category weights extends to local coefficient systems in a straightforward way. Its assertions are shown in \cite[Section 6]{FarberGrantSymm}.
\begin{theorem}
\label{TheoremWeights}
Let $p\colon E \to B$ be a fibration, let $A$ be an abelian group and let $u \in \widetilde{H}^*(B;A)$ with $u \neq 0$.
\begin{enumerate}[a)]
	\item If $\mathrm{wgt}_p(u)=k$, then $\secat(p:E \to B)\geq k$.
	\item If $f\colon X\to B$ is continuous and $f^*u \neq 0$, then $\mathrm{wgt}_{f^*p}(f^*u)\geq \mathrm{wgt}_p(u)$. 
	\item If $R$ is a commutative ring and $u_1,u_2\dots,u_r \in \widetilde{H}^*(B;R)$ satisfy $u_1 \cup u_2 \cup \dots \cup u_r \neq 0$, then
$$\mathrm{wgt}_p(u_1 \cup u_2 \cup \dots \cup u_r) \geq \sum_{i=1}^r \mathrm{wgt}_p(u_i). $$
\end{enumerate}
\end{theorem}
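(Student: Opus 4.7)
The plan for proving Theorem~\ref{TheoremWeights} is to address the three parts in order of increasing subtlety. Parts (a) and (b) will be essentially formal consequences of, respectively, Schwarz's characterization of sectional category via iterated fiberwise joins and the naturality of this construction under pullback. Part (c), the cup-product subadditivity, will be the genuinely technical step and the main obstacle.

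For part (a), I would invoke Schwarz's classical theorem, which states that $\secat(p\colon E\to B) < k$ if and only if the $k$-fold fiberwise join $p_k\colon E_k\to B$ admits a global section. A section $s$ of $p_k$ would satisfy $s^*\circ p_k^* = \mathrm{id}$ on $H^*(B;A)$, so $p_k^*$ would be split injective. Consequently $p_k^*(u)=0$ with $u\neq 0$ rules out such a section, forcing $\secat(p)\geq k$. Passing to the supremum in the definition of weight concludes this part.

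For part (b), the key input is that the fiberwise join commutes with pullback, so that $(f^*p)_k$ is canonically identified with $f^*(p_k)$. This produces a commutative diagram
\begin{equation*}
\begin{CD}
	H^*(B;A) @>{p_k^*}>> H^*(E_k;A) \\
	@V{f^*}VV @VV{\tilde f_k^*}V \\
	H^*(X;A) @>{(f^*p)_k^*}>> H^*(f^*(E_k);A).
\end{CD}
\end{equation*}
Thus $p_k^*(u)=0$ immediately yields $(f^*p)_k^*(f^*u)=0$, and taking the supremum over $k$ gives the desired inequality on weights.

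Part (c) requires more care, and I expect this to be the main obstacle. My plan is to use the relative-cohomology reformulation of vanishing pullbacks: for $k=\mathrm{wgt}_p(u_1)$, the condition $p_k^*(u_1)=0$ means that $u_1$ lifts to a class $\widetilde{u}_1\in H^*(M_{p_k}, E_k;R)$, where $M_{p_k}$ denotes the mapping cylinder of $p_k$ (homotopy equivalent to $B$), and analogously for each $u_i$. Taking $r=2$ first, the external cup product $\widetilde{u}_1\times \widetilde{u}_2$ lies in the relative group $H^*(M_{p_k}\times M_{p_l},\, M_{p_k}\times E_l\cup E_k\times M_{p_l};R)$. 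The crucial geometric ingredient is that, under the join identification $F^{*(k+l)}\cong F^{*k}*F^{*l}$ on fibers, the diagonal $\Delta\colon B\to B\times B$ admits a canonical lift of pairs $(M_{p_{k+l}}, E_{k+l})\to (M_{p_k}\times M_{p_l},\, M_{p_k}\times E_l\cup E_k\times M_{p_l})$. Pulling $\widetilde{u}_1\times\widetilde{u}_2$ back along this lift gives a relative class that maps to $u_1\cup u_2$ in $H^*(B;R)$ and whose restriction to $E_{k+l}$ vanishes, proving $p_{k+l}^*(u_1\cup u_2)=0$. An induction on $r$ then yields the general statement. The delicate point I expect to wrestle with is the existence of the diagonal lift to the \emph{union} rather than only to the full product on the level of joins; this geometric fact is precisely what converts cohomological vanishing into the additive weight inequality.
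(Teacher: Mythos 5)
Your proposal is essentially correct; note, though, that the survey itself does not prove Theorem \ref{TheoremWeights} but defers to \cite[Section 6]{FarberGrantSymm}, so the comparison is really with Farber--Grant's argument. Your parts (a) and (b) are exactly the standard reasoning: Schwarz's theorem (a section of $p_k$ splits $p_k^*$, so $p_k^*(u)=0$ with $u\neq 0$ forbids a section) and the compatibility of fiberwise joins with pullbacks; the only caveat is that the direction of Schwarz's theorem you use in (a) -- from an open cover with $k$ local sections to a global section of the $k$-fold join -- needs a partition of unity, i.e.\ a paracompact (or normal) base, a standing hypothesis in this circle of results that you should state. For part (c), your route differs from Farber--Grant's: they define $\mathrm{wgt}_p(u)\geq k$ by the vanishing of $u$ on every open $A\subset B$ over which $p$ admits $k$ local sections, in which case the product inequality is almost immediate (split a cover of $A$ by $k+l$ such sets into a $k$-block and an $l$-block, lift $u_1,u_2$ to relative classes, and multiply into $H^*(A,A)=0$), and they separately prove the equivalence with the fiberwise-join description that the survey takes as the definition. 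You instead argue directly with the join definition, Schwarz-style, via relative classes on the mapping-cylinder pairs $(M_{p_k},E_k)$, their external product, and a diagonal lift of pairs out of $(M_{p_{k+l}},E_{k+l})$. That is a valid and self-contained alternative; the ``delicate point'' you flag is handled by the classical fact that a (fiberwise) join maps into the union $(CX\times Y)\cup(X\times CY)$ inside $CX\times CY$: on $E_{k+l}=E_k*_B E_l$ send the join point with parameter $t$ to cylinder coordinates such as $\bigl(\max(0,1-2t),\max(0,2t-1)\bigr)$, so that one factor always lies in the subspace $E_k$ or $E_l$, and extend over the mapping cylinder using the cylinder coordinate; one also needs the pair $\{M_{p_k}\times E_l,\,E_k\times M_{p_l}\}$ to be excisive (thicken the ends if necessary) for the relative external product to exist. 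With these details supplied, your induction on $r$ gives the stated subadditivity, so the approach buys a proof adapted to the survey's definition at the cost of more point-set bookkeeping, whereas Farber--Grant's open-cover formulation makes (c) trivial but requires the equivalence of definitions as a separate step.
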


In the case of topological complexity, i.e. of $p=\pi:PX\to X \times X $, we shall employ the notation 
$$\mathrm{wgt}_{\tc}(u):= \mathrm{wgt}_{\pi}(u)$$
and call it \emph{the TC-weight of $u$}. 

There are various techniques for finding classes whose TC-weight exceeds one. Here, we want to discuss a method that was first suggested by Mark Grant and used by Grant and the author in \cite{GrantMescher} to compute the topological complexity of certain symplectic manifolds as we shall elaborate upon below, see also \cite[Section 4.2]{MescherSC}.

Consider again a fibration $p\colon E \to B$. Cohomology classes whose weight with respect to $p$ is at least two lie in the kernel of $p_2^*$, where $p_2\colon E_2 \to B$ denotes the fiberwise join of $p$ with itself. In terms of homotopy theory, this fiberwise join is  given as the homotopy pushout of the pullback $Q$ of $E \stackrel{p}{\rightarrow} B \stackrel{p}{\leftarrow} E$. As such, its cohomology with coefficients in an abelian group $A$ admits a Mayer-Vietoris sequence of the form
\begin{equation*}
	\dots \longrightarrow H^{k-1}(Q;A) \stackrel{\delta}{\longrightarrow} H^k(E_2;A) \stackrel{i_1^*\oplus i_2^*}{\longrightarrow} H^k(E;A)\oplus H^k(E;A) \longrightarrow H^k(Q;A) \longrightarrow \dots 
\end{equation*}
Here, $i_1$ and $i_2$ denote the whisker maps of the homotopy pushout diagram, see \cite[Section 2]{GrantMescher} for details.
In the case of the path fibration $\pi:PX \to X \times X$, it is easy to see that $Q \approx LX$, the free loop space $LX=C^0(S^1,X)$ of $X$, where each loop is imagined as the concatenation of two paths with the same endpoints. Moreover, the following diagram, whose upper row is the above Mayer-Vietoris sequence, commutes
\begin{equation*}
{\begin{tikzcd}
	\dots \arrow[r] & H^{k-1}(LX;A) \arrow[r, "\delta"] &  H^k(E_2;A) \arrow[r, "i_1^*\oplus i_2^*"]& H^k(PX;A)\oplus H^k(PX;A) \arrow[r] & \dots \\
	& & H^k(X\times X;A)\arrow[u,"p_2^*"] \arrow[ur, "\pi^* \oplus \pi^*"']
	\end{tikzcd}}
\end{equation*}
Consider a class $\sigma \in H^k(X \times X;A)$. Then $\sigma$ is a zero-divisor iff $\sigma \in \ker \pi^*$. Using this property and the exactness of the top row in the above diagram, we derive that 
$$p_2^*\sigma \in \mathrm{im}\left[\delta:H^{k-1}(LX;A) \to H^k(P_2X;A)\right].$$ 
Thus, a method of showing that $p_2^*\sigma=0$, i.e. that $\mathrm{wgt}_{\tc}(\sigma)\geq 2$, would be to 
\begin{itemize}
	\item Find a class $\alpha_\sigma \in H^{k-1}(LX;A)$ with $\delta(\alpha_\sigma) = p_2^*\sigma$. 
	\item If $\alpha_\sigma=0$, then $p_2^*\sigma= \delta(0)=0$, hence $\mathrm{wgt}_{\tc}(\sigma) \geq 2$.
\end{itemize}

In \cite[Section 3]{GrantMescher} and \cite[Section 4]{MescherSC}, we discuss how to find such a class $\alpha_\sigma$ if $\sigma$ is of the form 
$$\sigma = \bar{u} := 1 \times u - u \times 1$$ 
for some $u \in H^k(X;A)$. Here, $\times$ denotes the cohomology cross product and a simple computation shows that each class of the form $\bar{u}$ is indeed a zero-divisor. One obtains that the class $\alpha_\sigma$ is for $\sigma = \bar{u}$ given by 
$$\alpha_\sigma= \mathrm{ev}^*u/[S^1], $$
 where $\mathrm{ev}\colon LX \times S^1 \to X$, $\mathrm{ev}(\alpha,t)=\alpha(t)$, where $[S^1]$ is the fundamental homology class of $S^1$ and where $\cdot / \cdot$ denotes the slant product, see \cite[Section 6.1]{Spanier}. 

The final ingredient to derive a vanishing criterion for this class is a celebrated result by R. Thom who has shown in \cite{ThomQuelques} that every degree-$k$ \emph{rational} homology class is obtained as a pushforward of a fundamental class of a suitable $k$-dimensional closed manifold. Using the above description, Thom's result and Kronecker duality, one establishes the following result.
 
\begin{theorem}[{\cite[Proposition 5.3]{MescherSC}}]
\label{TheoremPullbackWeight}
	Let $X$ be a path-connected Hausdorff space, let $k \geq 2$ and let $u \in H^k(X;\Q)$. If $f^*u=0$ for all continuous maps of the form $f\colon P \times S^1 \to X$, where $P$ is a $k$-dimensional closed oriented manifold, then 
	$$\mathrm{wgt}_{\tc}(\bar{u})\geq 2.$$
\end{theorem}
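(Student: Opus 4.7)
The plan is to follow the strategy outlined in the paragraph immediately preceding the theorem: exhibit $\alpha_{\bar{u}} \in H^{k-1}(LX;\Q)$ with $\delta(\alpha_{\bar u})=p_2^*\bar u$, then verify $\alpha_{\bar u}=0$ under the stated hypothesis. The discussion before the theorem already identifies the candidate class as
$$\alpha_{\bar u} = \mathrm{ev}^* u / [S^1] \in H^{k-1}(LX;\Q),$$
where $\mathrm{ev}\colon LX \times S^1 \to X$ is the evaluation. So what really needs to be proved is: under the hypothesis on $u$, this slant product vanishes.

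The idea is to test the class against homology via Kronecker duality. Since we are working with rational coefficients, $\mathrm{ev}^*u/[S^1]=0$ if and only if $\langle \mathrm{ev}^*u/[S^1],\beta\rangle=0$ for every $\beta \in H_{k-1}(LX;\Q)$. I would first invoke Thom's theorem, cited in the excerpt, to write an arbitrary degree-$(k-1)$ rational homology class as $\beta = g_*[N]$, where $N$ is a closed oriented $(k-1)$-manifold and $g\colon N \to LX$ is continuous. The standard adjointness between slant product and cross product gives
$$\langle \mathrm{ev}^*u/[S^1],\, g_*[N]\rangle \;=\; \langle \mathrm{ev}^*u,\, g_*[N]\times [S^1]\rangle,$$
see \cite[Section 6.1]{Spanier}. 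Setting $f := \mathrm{ev}\circ (g\times \mathrm{id}_{S^1})\colon N \times S^1 \to X$, naturality of the cap/evaluation pairing and of the cross product on fundamental classes turns the right-hand side into
$$\langle u,\, f_*[N \times S^1]\rangle \;=\; \langle f^* u,\, [N\times S^1]\rangle.$$
Because $N\times S^1$ is a closed oriented $k$-manifold, the hypothesis of the theorem applies and forces $f^*u = 0$, so the pairing vanishes. As $\beta$ was arbitrary, Kronecker duality yields $\alpha_{\bar u}=0$, hence $p_2^*\bar u = \delta(\alpha_{\bar u}) = 0$, which is exactly the statement $\mathrm{wgt}_{\tc}(\bar u)\geq 2$.

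The routine but delicate step, and the one I would double-check carefully, is the compatibility between the slant product $/[S^1]$ and the Kronecker pairing with $g_*[N]$; the sign and the orientation of $N\times S^1$ have to be consistent so that $f_*[N\times S^1]= \mathrm{ev}_*(g_*[N]\times [S^1])$ holds with the right sign. Everything else (Thom's theorem, the description of $\alpha_{\bar u}$, and the Mayer-Vietoris argument producing it) has already been set up in the text, so the proof reduces to applying these ingredients in the indicated order. The necessity of rational coefficients enters precisely because Thom's representability theorem fails integrally.
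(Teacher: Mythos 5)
Your argument is exactly the proof the paper has in mind: take the Mayer--Vietoris preimage $\alpha_{\bar u}=\mathrm{ev}^*u/[S^1]$ of $p_2^*\bar u$, then kill it by Kronecker duality over $\Q$ together with Thom's rational representability, pairing against classes $g_*[N]$ and rewriting the pairing as $\langle f^*u,[N\times S^1]\rangle$ with $f=\mathrm{ev}\circ(g\times\mathrm{id}_{S^1})$; your closing remarks about signs and about why $\Q$-coefficients are needed are also the right ones (only vanishing up to a nonzero rational factor is required). The one point to repair is the dimension bookkeeping in your final step: as printed, the hypothesis concerns maps $f\colon P\times S^1\to X$ with $P$ a closed oriented manifold of dimension $k$, whereas your $N$ has dimension $k-1$, so ``the hypothesis applies because $N\times S^1$ is a closed oriented $k$-manifold'' is not literally an instance of the stated condition. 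The intended hypothesis (and the one in \cite[Proposition 5.3]{MescherSC}) is $\dim P=k-1$ --- this is what the symplectic application uses ($k=2$, $P=S^1$, $P\times S^1=T^2$, atoroidality) and what the remark on domination uses ($\dim P=\dim X-1$) --- so your reading is the correct one and your proof is then the paper's proof. If instead you insist on the statement verbatim, its hypothesis is formally stronger and still suffices: given $f\colon N\times S^1\to X$ with $\dim N=k-1$, apply the hypothesis to $F:=f\circ\mathrm{pr}\colon (N\times S^1)\times S^1\to X$ with $P:=N\times S^1$ of dimension $k$; since $\mathrm{pr}$ admits a section, $F^*u=\mathrm{pr}^*(f^*u)=0$ forces $f^*u=0$, and the rest of your argument goes through unchanged.
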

\begin{remark}
In the manifold case, the pullback condition occurring in Theorem \ref{TheoremPullbackWeight} can be derived from other, more geometric properties. More precisely, if we assume that $X$ is a smooth manifold, then the hypothesis of Theorem \ref{TheoremPullbackWeight} will be satisfied if one of the following holds:
\begin{itemize}
	\item  $X$ admits a metric of negative sectional curvature, see \cite[Theorem 3.4]{Neofyt},
	\item $u$ lies in the image of the comparison map $H^k_b(X;\R) \to H^k(X;\R)$, where we see $u$ as a real cohomology class and where $H^k_b(X;\R)$ denotes bounded cohomology, see \cite[Lemma 5 and Theorem 13]{KKM},
\item	$X$ is a closed oriented manifold, $k=\dim X$, $u=[X]^*$ is the dual of the fundamental class and $X$ is not dominated by any product of the form $P \times S^1$, where $P$ is a closed oriented manifold with $\dim P = \dim X-1$.
\end{itemize}
Concerning the last bullet, we recall that a closed manifold $N$ is \emph{dominated by $M$} if there exists a smooth map $f:M \to N$ of non-zero degree,  where $M$ is a closed manifold of the same dimension.
Results on the domination of manifolds by products have been obtained by D. Kotschick and C. L\"oh in \cite{KotschickLoeh} and by Kotschick and C. Neofytidis in \cite{KotschickNeofyt}, see also \cite[Section 4]{delaHarpe}.
\end{remark}

Theorem \ref{TheoremPullbackWeight} can now be used to derive lower bounds on topological complexity. The following result was obtained in this way by Neofytidis, improving previous results by the author from \cite[Section 6]{MescherSC}.

\begin{theorem}[{\cite[Theorem 1.1]{Neofyt}}]
\label{TheoremNeofyt}
Let $M$ be a closed oriented manifold of dimension $n \geq 3$. Assume that there exists a non-zero cohomology class $u \in H^k(M \times M;\Q)$ for some even $k$ with $1<k<n$, such that $f^*u=0$ for all continuous maps $f\colon P \times S^1 \to M$ for which $P$ is a closed oriented manifold with $\dim P=n-1$. Then
\begin{enumerate}[a)]
	\item $\tc(M)\geq 5$ if $n$ is odd,
	\item $\tc(M) \geq 6$ if $n$ is even,
	\item $\tc(M) \geq 8$ if $n=2k$ and $u\cup u \neq 0$.
\end{enumerate}
\end{theorem}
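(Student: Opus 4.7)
The plan is to exhibit, in each case, an explicit non-zero cup product of zero-divisors in $H^*(M\times M;\Q)$ whose sectional-category weight matches the claimed lower bound on $\tc(M)$. Theorem \ref{TheoremPullbackWeight} converts the pullback hypothesis into $\mathrm{wgt}_{\tc}(\bar u)\geq 2$; should its hypothesis be formulated with $\dim P=k$ rather than $\dim P=n-1$, one simply replaces any test map $g\colon Q\times S^1\to M$ with $\dim Q=k$ by $g\circ\mathrm{pr}\colon Q\times T^{n-1-k}\times S^1\to M$ and invokes the K\"unneth formula. Poincar\'e duality then supplies $v\in H^{n-k}(M;\Q)$ with $u\cup v\neq 0\in H^n(M;\Q)$, so $\bar v$ is a non-zero zero-divisor of weight at least $1$. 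Every bound below follows from the subadditivity of $\mathrm{wgt}_{\tc}$ under cup products (Theorem \ref{TheoremWeights}c).

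For case (a), I would verify $\bar u^2\cup\bar v\neq 0$. A direct expansion — using $k$ even and the degree vanishing $u^2v=0$ — gives
$$\bar u^2\cup\bar v = -v\times u^2 - 2\,u\times uv + 2\,uv\times u + u^2\times v.$$
For $n$ odd and $1<k<n$ even, the four bi-degrees $(n-k,2k),(k,n),(n,k),(2k,n-k)$ are pairwise distinct (by parity, since $n\neq 2k$ and $n\neq 3k$), and the middle summands $-2\,u\times uv$ and $2\,uv\times u$ are already non-zero, ruling out cancellation. Hence $\tc(M)\geq 2+2+1 = 5$.

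For case (b), I would compute $(\bar u\bar v)^2$. Writing $\bar u\bar v = A-B-C+D$ with $A=1\times uv$, $B=v\times u$, $C=u\times v$, $D=uv\times 1$ (all Koszul signs trivial since $k$ and $n-k$ are both even), the sixteen products in the square collapse to only $AD,DA,BC,CB$: every other term is killed either by one of the degree vanishings $u^2v=uv^2=(uv)^2=0$, or by the hypothesis $n\neq 2k$ (outside of case (c)), which forces at least one of $u^2,v^2$ to vanish. Each of the four survivors equals $uv\times uv$ with sign $+$, so $(\bar u\bar v)^2 = 4\,uv\times uv\neq 0$, giving $\tc(M)\geq 2(2+1) = 6$.

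For case (c), with $n=2k$ and $u^2\neq 0$, I would use $\bar u^4$. Since $k$ is even, $1\times u$ and $u\times 1$ commute, and the binomial theorem yields $\bar u^4=\sum_{j=0}^{4}\binom{4}{j}(-1)^{4-j}u^{4-j}\times u^j$; the degree bound $3k>n$ kills every term except $j=2$, leaving $\bar u^4 = 6\,u^2\times u^2\neq 0$. Hence $\tc(M)\geq 4\cdot 2 = 8$. The main technical obstacle throughout is the Koszul sign bookkeeping in $(a\times b)\cup(c\times d)=(-1)^{|b||c|}(ac)\times(bd)$: the evenness of $k$ (and of $n-k$ in case (b)) is precisely what forces the surviving cross terms to combine with identical sign, and indeed $(\bar u\bar v)^2$ vanishes identically in case (a), where $n-k$ is odd, explaining why a different product is needed there.
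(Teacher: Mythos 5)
The survey itself gives no proof of this theorem (it only cites Neofytidis and indicates that the result is obtained from Theorem \ref{TheoremPullbackWeight}), and your argument follows exactly that indicated route: you correctly read the hypothesis as concerning a class $u\in H^k(M;\Q)$ (the statement's ``$H^k(M\times M;\Q)$'' must be a typo, since $f$ maps into $M$), you reconcile the dimension of $P$ in Theorem \ref{TheoremPullbackWeight} with the dimension $n-1$ in the hypothesis by the torus-padding trick, which is valid because the projection $Q\times T^{n-1-k}\times S^1\to Q\times S^1$ is injective on cohomology, and you then combine $\mathrm{wgt}_{\tc}(\bar u)\geq 2$ with a Poincar\'e-dual class $v$ and Theorem \ref{TheoremWeights}. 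Your computations in cases (a) and (c) are correct: in (a) the four bidegrees $(n-k,2k),(k,n),(n,k),(2k,n-k)$ are pairwise distinct precisely because $n$ is odd and $k$ even, and over $\Q$ the K\"unneth theorem guarantees $u\times uv\neq 0$; in (c) the degree bound $3k>2k=n$ leaves only $6\,u^2\times u^2\neq 0$.

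The one genuine (though small) gap is in case (b). Statement (b) covers \emph{every} even $n$, including $n=2k$, whereas your justification that only $AD,DA,BC,CB$ survive in $(\bar u\bar v)^2$ explicitly invokes ``$n\neq 2k$ (outside of case (c))''; but case (c) carries the extra hypothesis $u^2\neq 0$, so the subcase $n=2k$, $u^2=0$ of (b) is not covered by what you wrote. This matters because for $n=2k$ the terms $B^2=v^2\times u^2$ and $C^2=u^2\times v^2$ sit in the same bidegree $(n,n)$ as $4\,uv\times uv$ and can genuinely cancel it: with intersection form $\langle 1,1,-1\rangle$ and $u^2=\mu$, $v^2=-2\mu$, $uv=\mu$ one gets $v^2\times u^2+u^2\times v^2+4\,uv\times uv=0$. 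The repair is immediate from your own computation: if $n=2k$ and $u^2=0$, then $B^2=C^2=0$ and $(\bar u\bar v)^2=4\,uv\times uv\neq 0$ as before; if $n=2k$ and $u^2\neq 0$, case (c) gives $\tc(M)\geq 8\geq 6$. With that one sentence added, the proof is complete and matches the strategy the survey attributes to Neofytidis.
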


\subsection{Symplectic manifolds}
 Given a smooth manifold $M$, we recall that a \emph{symplectic form} is a closed and non-degenerate differential two-form  $\omega \in \Omega^2(M)$. Here,  \emph{closed} means that $d\omega = 0$ with respect to the exterior differential, while \emph{non-degenerate} means that $\omega_x\colon T_xM \times T_xM \to \R$ is a non-degenerate bilinear form for each $x \in M$.  A \emph{symplectic manifold} $(M,\omega)$ is a smooth manifold $M$ with a symplectic form $\omega \in \Omega^2(M)$. It is a basic result from symplectic geometry that every symplectic manifold is even-dimensional. In the case that $M$ is closed and $2n$-dimensional, it follows from the non-degeneracy of $\omega$ that the $n$-fold wedge product $\omega^n= \omega \wedge \dots \wedge \omega \in \Omega^{2n}(M)$ is a volume form on $M$. For the de Rham cohomology class of $\omega$, this particularly yields that its $n$-th power
$ [\omega]^n = [\omega^n] \in H^{2n}(M;\R)$ is non-zero. Thus, \emph{any} closed $2n$-dimensional symplectic manifold satisfies ${\sf {cl}}_{\R}(M)\geq n$. 

In the main result of \cite{RudyakOprea}, it is shown by Y. Rudyak and J. Oprea that if a closed symplectic manifold $(M,\omega)$ is \emph{symplectically aspherical}, i.e. if $\int_{S^2} f^*\omega =0$ for all continuous $f\colon S^2 \to M$, then its Lusternik-Schnirelmann category satisfies 
$$\cat (M) = \dim M.$$
In general, the Lusternik-Schnirelmann category of a manifold is only bounded from above by its dimension, so for symplectically aspherical manifolds, this upper bound is indeed attained.

However, the topological complexity of a symplectically aspherical manifold does not necessarily need to attain the dimensional bound given by \eqref{EqUpperTCdimconn}, as the $2$-torus is symplectically aspherical with respect to its standard volume form, but satisfies $\tc(T^2)=2<4$. In \cite{GrantMescher}, M. Grant and the author were able to define a slightly stricter condition which provides an analogue of \cite{RudyakOprea} for TC.

\begin{theorem}[{\cite[Theorem 1.2]{GrantMescher}}]
	Let $(M,\omega)$ be a closed symplectic manifold. If $\omega$ is \emph{symplectically atoroidal}, i.e. if $\int_{T^2}f^*\omega=0$ for each continuous $f\colon T^2 \to M$, then 
	$$\tc(M)=2\dim M.$$
\end{theorem}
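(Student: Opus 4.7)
The upper bound $\tc(M) \le 2\dim M$ is immediate from \eqref{EqUpperTCdimconn} with $r=0$, so the task reduces to proving the matching lower bound. Writing $\dim M = 2n$, my plan is to bound $\tc(M)$ from below by the TC-weight of the $2n$-fold cup power of the canonical zero-divisor $\overline{[\omega]} := 1 \times [\omega] - [\omega] \times 1 \in H^2(M \times M;\R)$, combining Theorem \ref{TheoremWeights} with the pullback criterion of Theorem \ref{TheoremPullbackWeight}.

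The crux of the argument is to show $\mathrm{wgt}_{\tc}(\overline{[\omega]}) \ge 2$ by applying Theorem \ref{TheoremPullbackWeight} to $u = [\omega] \in H^2(M;\R)$. Its hypothesis in degree $k = 2$ reduces exactly to the vanishing of $f^*[\omega] \in H^2(T^2;\R)$ for every continuous $f \colon T^2 \to M$, which is precisely the symplectic atoroidality assumption: since $H^2(T^2;\R)$ is one-dimensional, the condition $\int_{T^2} f^*\omega = \langle f^*[\omega], [T^2]\rangle = 0$ forces $f^*[\omega] = 0$. Although Theorem \ref{TheoremPullbackWeight} is stated over $\Q$ whereas $[\omega]$ is naturally a real class, the extension to real coefficients is routine, since checking vanishing of a real cohomology class in $H^1(LM;\R)$ on $H_1(LM;\Q) \subset H_1(LM;\R)$ suffices and the Thom realization step applies verbatim there; this is addressed in \cite{GrantMescher}.

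With the weight-$\ge 2$ estimate in hand, Theorem \ref{TheoremWeights}(c) iterated $2n$ times yields $\mathrm{wgt}_{\tc}\bigl(\overline{[\omega]}^{\,2n}\bigr) \ge 4n$, so by Theorem \ref{TheoremWeights}(a) it suffices to check that $\overline{[\omega]}^{\,2n}$ is nonzero in $H^{4n}(M \times M;\R)$. This is a direct cross-product calculation in the spirit of Theorem \ref{ThmTCGT}: expanding $\overline{[\omega]}^{\,2n} = \bigl(1 \times [\omega]^2 - 2\,[\omega] \times [\omega] + [\omega]^2 \times 1\bigr)^n$ via the multinomial theorem, only the terms whose two cross-product factors each carry exactly $n$ copies of $[\omega]$ can survive (all other summands contain a cup power of $[\omega]$ of degree exceeding $\dim M$ in one factor), and every surviving term contributes a nonzero multiple of $[\omega]^n \times [\omega]^n$ of common sign $(-1)^n$; the sum is therefore nonzero, since $[\omega]^n \ne 0$ because $\omega^n$ is a volume form. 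This yields $\tc(M) \ge 4n = 2\dim M$. The genuinely substantial step is the first one, where atoroidality is recognized as the precise pullback-vanishing hypothesis of Theorem \ref{TheoremPullbackWeight} for $[\omega]$; the rest is formal cohomological algebra.
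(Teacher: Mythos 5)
Your proof is correct and takes essentially the same route as the paper: atoroidality is fed into Theorem \ref{TheoremPullbackWeight} (applied with $P=S^1$, exactly as the paper does) to get $\mathrm{wgt}_{\tc}(\overline{[\omega]})\geq 2$, the non-vanishing of $\overline{[\omega]}^{\,2n}$ is the same cross-product computation (the paper obtains the coefficient $(-1)^n\binom{2n}{n}$ by expanding $\overline{[\omega]}^{\,2n}$ directly), and Theorem \ref{TheoremWeights} together with the dimensional upper bound \eqref{EqUpperTCdimconn} finishes the argument just as you describe. Your explicit remark on passing from rational to real coefficients addresses a point the survey's proof silently glosses over, and is consistent with how the original reference handles it.
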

\begin{proof}
	Put $u:= [\omega] \in H^2(M;\R)$ and $2n := \dim M$. Since $S^1$ is the only closed oriented one-dimensional manifold, it follows directly from Theorem \ref{TheoremPullbackWeight} that $\mathrm{wgt}_{\tc}(\bar{u})\geq 2$ if $(M,\omega)$ is symplectically atoroidal. Since, as mentioned above,  $u^n \neq 0 \in H^{2n}(M;\R)$, one computes that
	$$\bar{u}^{2n} = (1 \times \omega - \omega \times 1)^{2n} = (-1)^n \binom{2n}{n} u^n \times u^n \neq 0,$$
since all other terms that occur in the product contain terms of the form $u^k$ for some $k >n$, so they have to vanish for dimensional reasons. Thus, we derive from Theorem \ref{TheoremWeights}.c) that
$$\mathrm{wgt}_{\tc}(\bar{u}^{2n}) \geq 2n \cdot \mathrm{wgt}_{\tc}(\bar{u})=4n= 2 \dim M.$$
The claim follows immediately from Theorem \ref{TheoremWeights}.a).
\end{proof}

\begin{remark}
 Recently, R. Orita has shown in \cite{Orita} that the atoroidality condition on the symplectic form can be slightly weakened to still produce cohomology classes of TC-weight two in a completely analogous way. Orita studies so-called toroidally monotone symplectic manifolds $(M,\omega)$ of finite Kodaira dimension and shows that if additonally $\dim M =4$, then $\tc(M)=8$.
\end{remark}

	It seems a reasonable guess to the author that the values of Lusternik-Schnirelmann category and TC of symplectic manifolds are related to the existence of certain torus actions and we want to briefly discuss certain results undermining this guess. 
	
	 As seen in \cite[Proposition 1.3]{GrantMescher}, a symplectically atoroidal manifold does not admit any non-trivial \emph{symplectic} $S^1$-action, while by \cite[Theorem 4.16]{LuptonOpreaCsymp} a symplectically aspherical manifold does not admit any non-trivial \emph{Hamiltonian} $S^1$-action, but might admit more general symplectic ones. 
	 See \cite[Chapter 3]{AudinTorus} or \cite[Section 7.6]{FOT} for discussions of both notions.
	
	It was shown by J. Oprea and J. Walsh in \cite{OpreaWalsh} that if there is an effective Hamiltonian $T^k$-action on $M$, where $k \in \{1,2,\dots,\frac12\dim M\}$, such that each orbit of the action is contractible in $M$, then 
	$$\cat (M) \leq \dim M - k.$$
As far as the author is aware, there is no comparable upper bound for topological complexity in the presence of symplectic torus actions. However, it can be derived from \cite[Corollary 5.3]{GrantTCfibr} that whenever a manifold $M$ is equipped with a locally smooth and semi-free non-trivial $T^k$-action, then 
$$\tc(X) \leq 2 \dim M - k.$$

A reasonable question for future research thus seems to be the following.

\begin{question}
	Let $(M,\omega)$ be a closed symplectic manifold admitting an effective symplectic $T^k$-action, where $1 \leq k \leq \frac12 \dim M$.  Does it hold that $\tc(M) \leq 2 \dim M -k$?
\end{question}

 \subsection{Small values of TC for manifolds} Let $X$ be a topological space. An elementary argument shows that $\tc(X)=0$ holds if and only if $X$ is contractible. Moreover, it is a well-established result by M. Grant, G. Lupton and J. Oprea from \cite{GLO} that every topological space $X$ with $\tc(X)=1$ has the homotopy type of an odd-dimensional sphere. The obvious next question is what one can say about spaces of topological complexity two. We want to discuss this question under the additional assumption that $X$ is a closed manifold. 

The case of a closed surface $S$ has been fully understood and it is known that $\tc(S)=2$ if and only if $M=S^2$ or $M=T^2$. Therefore, we shall assume throughout the following that $M$ is a closed manifold with $\dim M \geq 3$.  If $\tc(M)= 2$, then necessarily $\cat(M) \leq 2$ as well, which implies by a theorem of A. Dranishnikov, M. Katz and Y. Rudyak on Lusternik-Schnirelmann categories of manifolds, see \cite[Theorem 1.1]{DKR}, that $\pi_1(M)$ is trivial or a free group. 

It turns out that the condition $\tc(M)\leq 2$ is much more restrictive as can be seen in  the cohomology rings of $M$. The following conditions on cohomology are a direct consequence of $\tc(M)\leq 2$  for arbitrary coefficient rings $R$:
\begin{itemize}
	\item All triple cup products of classes of positive degree must vanish: one checks that if there were classes $u_1,u_2,u_3\in \widetilde{H}^*(M;R)$ with $u_1\cup u_2 \cup u_3 \neq 0$, then the product $\bar{u}_1 \cup \bar{u}_2 \cup \bar{u}_3$ would be non-zero as well. 
Then $\mathrm{wgt}_{\tc}(\bar{u}_1 \cup \bar{u}_2 \cup \bar{u}_3) \geq 3$, which would yield $\tc(M) \geq 3$, hence a contradiction. 
	\item If there is a class of TC-weight two, then its cup product with every zero-divisor must vanish, as otherwise one would likewise obtain a class of TC-weight three.
\end{itemize}
The reader can easily see that this puts strong restrictions on the cup product structures of $M$. A more sophisticated analysis of the cohomological restrictions for closed \emph{oriented} manifolds whose TC is at most two was carried out by P. Pave\v{s}i\'{c} in \cite{PavesicManif}. In the following result, we use the usual convention in graded rings that $x_i$ shall always denote a generator of degree $i$. We further denote an exterior algebra by $\Lambda$ and let $\mathbb{F}_2$ be the field with two elements.
\begin{theorem}[{\cite{PavesicManif}}]
	Let $M$ be a closed oriented manifold with $\tc(M) \leq 2$. Then $\pi_1(M)$ is trivial or a free group and one of the following holds:
	\begin{enumerate}[(i)]
		\item $M$ has the homotopy type of $S^k$ for some $k \in \mathbb{N}$,
		\item  $H^*(M;\Z) \cong \Lambda (x_i,x_j)$ as rings for some odd numbers $i$ and $j$,
		\item there exists $k \in \{1,2,\dots,\dim M\}$, such that $H_i(M;\Z)=\{0\}$ if $i \notin \{0,k,\dim M\}$ and such that $H^*(M;\mathbb{F}_2) \cong \Lambda (x_k,x_{k+1}) \otimes \mathbb{F}_2$ as rings.
	\end{enumerate}
\end{theorem}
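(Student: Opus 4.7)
The plan is to split the argument into a fundamental-group statement and a cohomological classification.

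For $\pi_1$, one uses the general inequality $\cat(M) \leq \tc(M) \leq 2$ together with the Dranishnikov-Katz-Rudyak theorem \cite[Theorem 1.1]{DKR} quoted in the text to conclude that $\pi_1(M)$ is trivial or free.

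For the cohomological statement, set $n = \dim M$ and $V = \widetilde{H}^*(M;R)$ for $R \in \{\Q, \Z, \mathbb{F}_2\}$. The two bulleted consequences of $\tc(M) \leq 2$ immediately preceding the theorem give that $V \cdot V \cdot V = 0$ and that every triple product of zero-divisors vanishes. The engine for turning these into ring-theoretic information is Poincar\'{e} duality. If $u \in V^i$ and $v \in V^j$ satisfy $u \cup v \neq 0$ with $0 < i+j < n$, PD provides $w \in V^{n-i-j}$ with $u \cup v \cup w \neq 0$, contradicting the triple-vanishing. Consequently $V^i \cdot V^j = 0$ whenever $i + j \notin \{0, n\}$; in particular $u^2 = 0$ for every $u \in V^i$ with $2i \neq n$.

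With this structural information in hand, the classification reduces to casework on how many middle degrees support $V$. The single-middle-degree case $n = 2k$ with $u^2 \neq 0$ is excluded by the direct computation of $\bar{u}^3$, which is non-zero over $\Q$ and forces $\tc(M) \geq 3$. Otherwise, $V$ either consists only of the top-degree class, in which case PD together with the fundamental-group restriction and a standard Hurewicz/Whitehead argument produces $M \simeq S^k$ (case (i)), or $V$ has generators in two distinct middle degrees $i < n - i$. In the latter situation, the structural relations above force $H^*(M;R) \cong \Lambda(u,v)$ with $|u| = i$, $|v| = n - i$ and $uv = [M]^*$. Expanding $\bar{u}^2$ in the cross-product basis shows it vanishes when $i$ is odd but equals $-2 (u \times u)$ when $i$ is even; in the even case the product $\bar{u}^2 \cup \bar{v}$ then carries a non-zero cross-product component, forcing $\tc(M) \geq 3$ and contradicting the hypothesis. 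This pins both $i$ and $n-i$ as odd, giving case (ii). Over $\mathbb{F}_2$ the scalar $-2$ vanishes, the even-degree obstruction dies, and case (iii)---with consecutive generator degrees $k, k+1$ linked integrally by a Bockstein---can survive mod $2$ while being invisible over $\Q$.

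The main obstacle is the integral bookkeeping for case (iii): one has to explain how the rational vanishing imposed by the TC bound forces torsion in $H_k(M;\Z)$ connected by a Bockstein to the mod-2 generator in degree $k+1$, while simultaneously ensuring no additional integral cohomology class disturbs the exterior ring structure. The characteristic-2 collapse of $\bar{u}^2$ is the pivot between cases (ii) and (iii), and keeping the integral and mod-2 information consistent is the delicate technical point.
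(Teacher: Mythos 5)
The survey does not prove this theorem --- it is quoted from Pave\v{s}i\'{c} --- so your proposal has to stand on its own, and as written it has a genuine gap in the classification step. The $\pi_1$ part (via $\cat(M)\leq\tc(M)\leq 2$ and \cite[Theorem 1.1]{DKR}) is fine, as are the reduction ``products of two positive-degree classes land only in top degree,'' the exclusion of $n=2k$ with $u^2\neq 0$, and the parity computation $\bar{u}^2\cup\bar{v}\neq 0$ for $|u|$ even. The gap is the sentence ``the structural relations above force $H^*(M;R)\cong\Lambda(u,v)$'': vanishing of triple cup products of positive-degree classes together with Poincar\'{e} duality does \emph{not} force the middle cohomology to have rank one in each degree. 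For example, $M=(S^2\times S^4)\#(S^2\times S^4)$ satisfies all of your structural constraints (all triple products of positive-degree classes vanish, and products of two such classes are non-zero only in the top degree), yet its cohomology is not $\Lambda(u,v)$. It is excluded only because $\tc\leq 2$ bounds the cup length of \emph{arbitrary} zero-divisors, not merely those of the form $\bar{u}=1\times u-u\times 1$: since $a_1\cup a_2=0$, the class $a_1\times a_2$ is a zero-divisor, and $(a_1\times a_2)\cup\bar{b}_1\cup\bar{b}_2=\pm\, [M]^*\times[M]^*\neq 0$, so $\zcl(M)\geq 3$ and $\tc(M)\geq 3$. Your argument never uses zero-divisors outside the $\bar{u}$ family (nor the TC-weight refinement in the survey's second bullet), so it cannot rule out higher-rank middle cohomology, which is essential for reaching cases (ii) and (iii) at all.

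Beyond that, the actual substance of the dichotomy (ii)/(iii) is left unproven: that in case (ii) the \emph{integral} cohomology is torsion-free and exterior, and that in case (iii) the two mod-$2$ generators sit in consecutive degrees $k,k+1$ with integral homology concentrated in degrees $0,k,\dim M$. You explicitly defer this to ``delicate integral bookkeeping,'' but the rational analysis you do carry out would only show that $H^*(M;\Q)$ looks like that of a sphere or a product of two odd-dimensional spheres, which is far weaker than the stated theorem. So the proposal is an outline of the easier rational half plus an acknowledgement of the hard half; to close it one needs the full zero-divisor/sectional-category-weight analysis together with a universal-coefficient/Bockstein argument tying the mod-$2$ ring structure to the integral homology, which is the content of Pave\v{s}i\'{c}'s proof.
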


In Subsection 1.1 we have derived lower bounds for TC using certain pullbacks to products. Conversely, one can use the corresponding results to derive the domination of a manifold by a product if its topological complexity is sufficiently small.

\begin{theorem}[{\cite[Propositions 6.2 and 6.3, Theorem 6.6]{MescherSC}}] Let $M$ be a closed oriented manifold and assume that one of the following holds:
\begin{enumerate}[(i)]
\item $\tc(M)\leq 2$,
	\item  $M$ is even-dimensional and $\tc(M) \leq 3$,  
\item $\dim M = 3$ and $\tc(M) \leq 3$.
\end{enumerate}
Then $M$ is a rational homology sphere or $M$ is dominated by a manifold of the form $P \times S^1$, where $P$ is a closed oriented manifold with $\dim P = \dim M -1$.
\end{theorem}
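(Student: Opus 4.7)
The plan is to argue by contrapositive uniformly in all three cases. Suppose $M$ is neither a rational homology sphere nor dominated by any product $P \times S^1$ with $\dim P = n-1$, where $n := \dim M$. The central input is the third bullet of the remark following Theorem~\ref{TheoremPullbackWeight}: the non-domination assumption lets me take $u := [M]^* \in H^n(M;\Q)$ and apply Theorem~\ref{TheoremPullbackWeight} to conclude that the zero-divisor $\bar u = 1 \times u - u \times 1$ satisfies $\mathrm{wgt}_{\tc}(\bar u) \geq 2$. I will then cup $\bar u$ with additional zero-divisors, using the multiplicativity of weights from Theorem~\ref{TheoremWeights}.c), until the resulting weight exceeds the TC bound imposed in (i)--(iii).

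For case (i), the assumption that $M$ is not a rational homology sphere provides a non-zero class $v \in H^k(M;\Q)$ with $0 < k < n$. A direct cross-product computation reduces $\bar u \cup \bar v$ to $-(-1)^{nk}\,v \times u - u \times v$, since the term $uv$ lies in degree $>n$ and therefore vanishes. Because $k \neq n$, the two surviving summands live in distinct Künneth components, so the sum is non-zero. Theorem~\ref{TheoremWeights}.c) then yields $\mathrm{wgt}_{\tc}(\bar u \cup \bar v) \geq 2+1 = 3$, contradicting $\tc(M) \leq 2$.

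For case (ii), with $n$ even, graded commutativity gives $\bar u^2 = -2\,u \times u \neq 0$, so Theorem~\ref{TheoremWeights}.c) boosts this to $\mathrm{wgt}_{\tc}(\bar u^2) \geq 4$, contradicting $\tc(M) \leq 3$. Notably, the rational homology sphere alternative is not even needed here, giving the stronger conclusion that $M$ must be dominated by $P \times S^1$. For case (iii), I first observe that for odd $n$ the analogous formula yields $\bar u^2 = 0$, so two further zero-divisors must be cupped in. Since $M$ is not a rational homology sphere, both $H^1(M;\Q)$ and $H^2(M;\Q)$ are non-trivial, and the perfect Poincar\'{e} pairing provides non-zero $v \in H^1(M;\Q)$ and $w \in H^2(M;\Q)$ with $v \cup w = \lambda u$ for some $\lambda \neq 0$. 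I would then expand $\bar u \cup \bar v \cup \bar w$ in full, discarding each summand containing $uv$ or $uw$ (which vanish for dimensional reasons) and using $vw = \lambda u$ to turn the remaining factors into copies of $u \times u$; I expect the four surviving mixed terms to coalesce into a non-zero multiple of $u \times u$. Theorem~\ref{TheoremWeights}.c) then delivers $\mathrm{wgt}_{\tc}(\bar u \cup \bar v \cup \bar w) \geq 2+1+1 = 4$, once more contradicting $\tc(M) \leq 3$.

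The main technical obstacle is the sign-sensitive Künneth bookkeeping in case (iii): the four mixed cross-product terms must be tracked carefully and shown not to cancel. Cases (i) and (ii) are streamlined variants of the same calculation. The use of rational coefficients is essential throughout, both to invoke Thom's theorem through Theorem~\ref{TheoremPullbackWeight} and to ensure the vanishing of $v^2$ in odd degree in case (iii); the obstruction produced is therefore genuinely rational rather than integral.
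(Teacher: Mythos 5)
Your proposal is correct and follows essentially the same route as the paper's source argument: take $u=[M]^*$, use the non-domination hypothesis via the remark after Theorem \ref{TheoremPullbackWeight} to get $\mathrm{wgt}_{\tc}(\bar{u})\geq 2$, and cup with further zero-divisors (with Poincar\'{e} duality providing $v\cup w=\lambda u$ in the $3$-dimensional case) to exceed the assumed bound on $\tc(M)$. The expansion you left as an expectation in case (iii) does work out: all terms involving $uv$, $uw$ or $u^2$ vanish for degree reasons and $\bar{u}\cup\bar{v}\cup\bar{w}=-2\lambda\, u\times u\neq 0$, so the weight estimate $\geq 4$ goes through as you predicted.
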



\begin{remark}
CW complexes of topological complexity two have also been studied by A. Boudjaj and Y. Rami in \cite{BoudjajRami}. There the authors have worked out properties of minimal CW decompositions under certain additional conditions on the cohomology of the complex. 
\end{remark}

\subsection{Motion planning in work spaces} As mentioned in the introduction, manifolds are often used to model configuration spaces of robots or other mechanical systems. However, if one is only interested in the concrete task the robot is carrying out, one should study motion planning in the \emph{work space} of a robot, i.e. the space that formalizes all of the possible positions of its end effector (EE). The physical position of the EE is often represented by the position of a particular point on the end effector as a point in $\R^3$ plus a representation of its \emph{orientation}, seen as a positive orthonormal basis in $\R^3$ or, equivalently, a matrix in $SO(3)$, see \cite[Section 2]{Pfalzgraf}. Assume now that there are additional holonomic constraints on the position of the EE, which occur for example if it is moving on an oriented surface $S\subset\R^3$. Here we assume that the size of the robot is negligible. If $x \in S$ is the position and $(b_1,b_2,b_3)$ is the orthonormal basis associated with the EE, the condition that it is bound to the surface yields that  one of the basis vectors, say $b_3$, is normal to the surface. Then $b_1$ and $b_2$ are tangent to the surface, $b_1,b_2 \in T_xS$. If we equip $S$ with the metric it inherits from the Euclidean metric of $\R^3$, then $(b_1,b_2)$ indeed forms an orthonormal basis of $T_xS$ which fully determines the orientation of the EE, since the side of $S$ that the EE is on is considered as fixed. 

In \cite{TCframe}, the author has suggested the following abstraction of the situation: the position of the end effector should be represented by a point in a smooth manifold that is equipped with a Riemannian metric, thus allowing for measurements of distances and lengths, and a positive orthonormal basis of the tangent space at that point with respect to the inner product given by the Riemannian metric. This is reflected in a well-known construction from differential geometry, namely the \emph{special orthonormal frame bundle} or simply the \emph{frame bundle of $M$}, where $M$ is an oriented Riemannian manifold, given by 
$${\small F(M,g) = \{(x,b_1,\dots,b_n) \ | \ x \in M, \ (b_1,\dots,b_n)\text{ is a positive orthonormal basis of }T_xM\}.}$$	
The obvious projection $p:F(M,g) \to M$ is then a principal $SO(n)$-bundle over $M$. In terms of the frame bundle, we can formulate what we call the \emph{oriented motion planning problem}: \medskip 

\emph{Given $x,y \in M$, a positive orthonormal basis $B_1$ of $T_xM$ and a positive orthonormal basis $B_2$ of $T_yM$, find a continuous path $\gamma\colon [0,1] \to F(M,g)$ with $\gamma(0)=(x,B_1)$ and $\gamma(1)=(y,B_2)$.}\medskip 

As the reader will easily realize, this is nothing but the original topological motion planning problem suggested by Farber applied to a frame bundle, so to study the complexity of oriented motion planning, we have to study the value of $\tc(F(M,g))$.

To find an upper bound for this value, we note that given a principal $G$-bundle $p:P \to B$ for a smooth Lie group $G$, the fact that $P$ is equipped with a free $G$-action together with a result of M. Grant, see \cite[Corollary 5.3]{GrantTCfibr}, yields that the upper bound on $\tc(P)$ from \eqref{EqUpperTCdimconn} can be improved to 
$$\tc(P) \leq 2 \dim B + \dim G.$$
In the case of a frame bundle over an $n$-dimensional manifold we obtain
$$\tc(F(M,g)) \leq 2n + \dim SO(n) = \frac{n(n+3)}{2}.$$
 If $M$ is parallelizable, i.e. if its tangent bundle is a trivial vector bundle, then $F(M,g)$ will be trivial as well and the standard inequality for TC of product spaces yields
$$\tc(F(M,g))\leq \tc(M)+ \tc(SO(n)) = \tc(M)+\cat(SO(n)),$$
using that $\tc(G)=\cat(G)$ for any path-connected topological group $G$, see \cite[Lemma 8.2]{FarberTC2}. In fact, the exact value of $\cat(SO(n))$ is only known for $n \leq 10$, see \cite[p. 11]{TCframe} for details and references.

It turns out that under certain conditions, the TC of a frame bundle has a simple lower bound. 

\begin{theorem}[{\cite[Theorem 1]{TCframe}}]
\label{TheoremFrame}
Let $(M,g)$ be an oriented $n$-dimensional Riemannian manifold. If one of the following holds:
\begin{enumerate}[(i)]
	\item the inclusion of a fiber induces a surjection $H^*(F(M,g);K) \to H^*(SO(n);K)$ for some field $K$ whose characteristic is not two,
	\item $M$ admits a spin structure,
\end{enumerate}	
then $\tc(F(M,g)) \geq n-1$.
\end{theorem}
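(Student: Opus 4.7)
The plan is to invoke the cohomological lower bound $\tc(X)\geq \zcl_K(X)$, where $\zcl_K$ denotes zero-divisor cup length over a coefficient field $K$, and to exhibit $n-1$ zero-divisors in $H^*(F(M,g)\times F(M,g);K)$ whose cup product is nonzero; the choice of $K$ is dictated by which hypothesis we are exploiting.

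For case (ii) I would take $K=\mathbb{F}_2$. A spin structure on $M$ is equivalent to $w_2(M)=0\in H^2(M;\mathbb{F}_2)$. In the mod-$2$ Serre spectral sequence of the principal $SO(n)$-bundle $p\colon F(M,g)\to M$, the transgression of the degree-one generator $e_1\in H^1(SO(n);\mathbb{F}_2)$ is precisely $w_2(M)$, which vanishes by assumption. All further differentials out of the $E_r^{0,1}$-slot land in $E_r^{r,2-r}=0$ for $r\geq 3$, so $e_1$ survives to $E_\infty^{0,1}$ and lifts to some $u\in H^1(F(M,g);\mathbb{F}_2)$ restricting to $e_1$ under the fibre inclusion $\iota$. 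The classical ring structure of $H^*(SO(n);\mathbb{F}_2)$ gives $e_1^{n-1}\neq 0$, hence $u^{n-1}\neq 0$. For the zero-divisor $\bar u:=1\times u - u\times 1$, a K\"unneth expansion of $\bar u^{n-1}$ produces summands $u^{i}\times u^{n-1-i}$ sitting in pairwise distinct bidegrees, so cancellation is impossible; the $(n-1,0)$-summand equals $u^{n-1}\times 1\neq 0$, therefore $\bar u^{n-1}\neq 0$, and the lower bound follows.

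For case (i) the surjectivity of $\iota^*\colon H^*(F(M,g);K)\to H^*(SO(n);K)$ triggers the Leray--Hirsch theorem, which gives $H^*(F(M,g);K)\cong H^*(M;K)\otimes H^*(SO(n);K)$ as an $H^*(M;K)$-module. Since $\mathrm{char}\,K\neq 2$, the fibre cohomology is an exterior algebra on odd-degree generators $y_1,\dots,y_r$ with $r=\lfloor n/2\rfloor$. I would choose lifts $\tilde y_i\in H^*(F(M,g);K)$ and combine their zero-divisors with zero-divisors of pullbacks of base classes, notably the orientation class $p^*[M]^*$, whose non-triviality is guaranteed by the Leray--Hirsch injectivity of $p^*$, in order to assemble $n-1$ factors. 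Once more a K\"unneth analysis of the resulting product should isolate a nonvanishing summand in a top bidegree of $H^*(F(M,g)\times F(M,g);K)$.

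The main obstacle lies in case (i): over a field of characteristic not $2$ the fibre contributes only $\lfloor n/2\rfloor<n-1$ exterior generators, so fibre zero-divisors by themselves are not enough, and one must splice in base classes while carefully controlling the Leray--Hirsch ring structure, which generally twists the tensor-product algebra by bundle-theoretic correction terms. Poincar\'{e} duality for the closed oriented manifold $M$ is the crucial ingredient ensuring that $p^*[M]^*$ multiplies nontrivially against top fibre classes, thereby forcing the intended top-bidegree K\"unneth summand of the product of zero-divisors to survive.
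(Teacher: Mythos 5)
Your case (ii) argument is correct and complete: a spin structure on $M$ gives (equivalently, via the vanishing of the transgression $d_2(e_1)=w_2(M)$ in the mod $2$ Serre spectral sequence, or directly from the definition of a spin structure as a class on the frame bundle restricting to the generator on each fibre) a class $u\in H^1(F(M,g);\mathbb{F}_2)$ with $\iota^*u=e_1$; since $e_1^{n-1}\neq 0$ in $H^*(SO(n);\mathbb{F}_2)$, the ring map $\iota^*$ gives $u^{n-1}\neq 0$, and your bidegree argument correctly yields $\bar u^{n-1}\neq 0$, hence $\tc(F(M,g))\geq \zcl_{\mathbb{F}_2}(F(M,g))\geq n-1$. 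This is a different and more elementary packaging than the route indicated in the survey for (ii), which passes through the principal $\mathrm{Spin}(n)$-double cover and bounds $\tc(F(M,g))$ from below by the sectional category of an associated double cover of $F(M,g)\times F(M,g)$; the cohomological core (the degree-one mod $2$ class restricting to $e_1$ and its powers) is the same, but your zero-divisor version avoids the covering-space machinery, at the possible cost of being no sharper than $n-1$.

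Case (i), however, is only a plan, and as described it cannot be completed. Over a field $K$ with $\mathrm{char}\,K\neq 2$, $H^*(SO(n);K)$ is exterior on $\lfloor n/2\rfloor$ odd-degree generators; the ideal of zero-divisors of such an algebra is generated by the classes $\bar y_i$, which anticommute and square to zero, so no product of more than $\lfloor n/2\rfloor$ zero-divisors coming from the fibre can survive, exactly as you observe. The problem is that the base need not make up the difference, because hypothesis (i) places no lower bound on $\zcl_K(M)$: the hypothesis is satisfied by the trivial bundle over a contractible oriented $M$ (where $F(M,g)\simeq SO(n)$ and $\zcl_K(F(M,g))=\lfloor n/2\rfloor<n-1$ for $n\geq 4$), and also by the round sphere $S^{2m+1}$, whose frame bundle is $SO(2m+2)$ and is totally non-cohomologous to zero over $\Q$ (the base has cohomology only in odd degrees, so the rational spectral sequence collapses), yet $\zcl_{\Q}\big(SO(2m+2)\big)=m+1<2m=n-1$ for $m\geq 2$. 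So in these cases \emph{no} product of $n-1$ zero-divisors with $K$-coefficients is nonzero, and splicing in $p^*[M]^*$ cannot help; moreover that step silently assumes $M$ closed and Poincar\'{e} duality, which the statement does not grant. Whatever the precise argument of the cited source is (the survey only records that the Leray--Hirsch theorem and zero-divisor methods enter), it cannot reduce to assembling $n-1$ zero-divisor factors over the given field $K$ as you propose, so case (i) remains a genuine gap in your proof.
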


Condition (i) of Theorem \ref{TheoremFrame} is known under the term that the bundle $F(M,g) \to M$ is  \emph{totally non-cohomologous to zero}, see \cite{FOT}.  If this condition holds, then the Leray-Hirsch theorem will be  applicable to $F(M,g) \to M$. Using this theorem, one uses the lower bound for TC by zero-divisor cup length to derive the result. Condition (ii) implies that $F(M,g)$ is double-covered by a principal $\mathrm{Spin}(n)$-bundle. Using this bundle, one bounds $\tc(F(M,g))$ from below by the sectional category of a certain double cover of $F(M,g) \times F(M,g)$, which is in turn estimated from below by $\dim M$ using cohomological methods. 

The author suggests the following question, as he is not aware of any counterexample to the assertion.

\begin{question}
Does the special orthonormal frame bundle of every oriented Riemannian manifold satisfy $\tc(F(M,g))\geq \dim M-1$?
\end{question}

\subsection{Connected sums} Consider two $n$-dimensional manifolds $M_1$ and $M_2$, where $n \in \mathbb{N}$.  We first remove an open $n$-ball both from $M_1$ and from $M_2$  and then glue both manifolds along the boundaries of the removed balls. The resulting manifold, which is unique up to isotopy, is called the \emph{connected sum of $M_1$ and $M_2$} and usually denoted by $M_1\# M_2$, see \cite[Example 9.31]{LeeSmooth}. 
Iterating this construction, one can define the connected sum of $k$ manifolds of the same dimension $M_1,M_2,\dots,M_k$, as 
$$\#_{i=1}^k M_i = M_1 \#M_2 \# \dots \#M_k.$$
As an example, a closed oriented surface of genus $g \in \mathbb{N}$ is obtained by taking the connected sum $S_g = \#_{i=1}^g T^2$ of $g$ copies of $T^2$. In the context of motion planning, this construction may occur if the space in which a robot is working is extended or merged with another one.

A natural question is how the complexity of motion planning in a connected sum is related to the complexity of its summands. It seems a reasonable guess that motion planning does not become simpler after taking connected sums as one removes a contractible subset from a manifold and glues in another manifold instead.  In the case that both manifolds are simply connected, the hypothesis has been confirmed by A. Dranishnikov and R. Sadykov in \cite{DSconnectedsum}.

\begin{theorem}[{\cite[Theorem 2 and Proposition 18]{DSconnectedsum}}] 
\label{TheoremTCconnsum}
Let $M_1$ and $M_2$ be two closed oriented manifolds with $\dim M_1=\dim M_2$. If $M_1$ is simply connected, then 
$$\tc(M_1 \# M_2) \geq \tc(M_1).$$
In particular, if $M_2$ is simply connected as well, then 
 $$\tc(M_1 \# M_2) \geq \max\{\tc(M_1),\tc(M_2)\}.$$
\end{theorem}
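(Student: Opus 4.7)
The natural strategy is to exploit the \emph{pinch map} $q \colon M_1 \# M_2 \to M_1$ that collapses the complement of the connecting ball in the $M_2$-summand to a point. This yields a continuous map of degree one between closed oriented $n$-manifolds. My first step would be to observe that since $\deg(q) = 1$, Poincar\'{e} duality implies that $q^* \colon H^*(M_1;\mathbb{Q}) \to H^*(M_1 \# M_2;\mathbb{Q})$ is split injective, with left inverse given by the Gysin map $q_!$ satisfying $q_! \circ q^* = \deg(q) \cdot \operatorname{id}$. By the K\"unneth theorem this injectivity is inherited by $(q \times q)^* \colon H^*(M_1 \times M_1;\mathbb{Q}) \to H^*((M_1 \# M_2) \times (M_1 \# M_2);\mathbb{Q})$, so no nontrivial rational class on $M_1 \times M_1$ is annihilated by $q \times q$.

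Next, I would transport this cohomological information to the sectional category via the weight machinery of Theorem \ref{TheoremWeights}. Post-composition by $q$ defines a map $\Phi \colon P(M_1 \# M_2) \to (q \times q)^* PM_1$ of fibrations over $(M_1 \# M_2) \times (M_1 \# M_2)$, so any local section of $\pi_{M_1 \# M_2}$ pushes down along $\Phi$ to a local section of the pullback, yielding
$$\secat\bigl((q \times q)^* \pi_{M_1}\bigr) \leq \tc(M_1 \# M_2).$$
For every nonzero $u \in H^*(M_1 \times M_1;\mathbb{Q})$, the injectivity of $(q \times q)^*$ combined with Theorem \ref{TheoremWeights}.b) gives $\mathrm{wgt}_{(q\times q)^*\pi_{M_1}}\bigl((q\times q)^* u\bigr) \geq \mathrm{wgt}_{\tc}(u)$, and applying Theorem \ref{TheoremWeights}.a) then produces $\tc(M_1 \# M_2) \geq \mathrm{wgt}_{\tc}(u)$.

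The hard part is to upgrade from this weight-of-a-single-class bound to the full inequality $\tc(M_1 \# M_2) \geq \tc(M_1)$, since $\tc(M_1)$ need not in general be realized as the TC-weight of any single cohomology class. Using Theorem \ref{TheoremWeights}.c) with products of zero-divisors would only yield the weaker estimate $\tc(M_1 \# M_2) \geq \zcl(M_1)$. Closing this gap -- the crux of the argument -- presumably requires a more refined comparison between the iterated fiberwise joins of $\pi_{M_1 \# M_2}$ and $\pi_{M_1}$, using injectivity of $(q \times q)^*$ on the obstruction cohomology at each stage of a Ganea-type construction, to force $\secat\bigl((q \times q)^* \pi_{M_1}\bigr) = \tc(M_1)$. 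The second assertion of the theorem then follows immediately by symmetry, applying the first inequality once with the roles of $M_1$ and $M_2$ reversed.
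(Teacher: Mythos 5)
Your sketch stops exactly where the theorem actually lives, and you say so yourself: the chain ``degree-one pinch map $q$ $\Rightarrow$ $(q\times q)^*$ injective on rational cohomology $\Rightarrow$ weight bounds via Theorem \ref{TheoremWeights}'' only yields $\tc(M_1\#M_2)\geq \mathrm{wgt}_{\tc}(u)$ for individual classes $u$, i.e.\ at best the zero-divisor/weight lower bounds for $\tc(M_1)$, never $\tc(M_1)$ itself. The missing step, forcing $\secat\bigl((q\times q)^*\pi_{M_1}\bigr)=\tc(M_1)$, is not a technical refinement one can wave at: pulling back a fibration can strictly decrease sectional category, and proving it does not do so here is essentially equivalent to the theorem. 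A concrete symptom that the argument as written cannot be completed along these lines is that you never use the hypothesis that $M_1$ is simply connected -- rational injectivity of $(q\times q)^*$ holds for any degree-one map between closed oriented manifolds, yet the statement without simple connectivity is precisely the open question recorded immediately after the theorem in this survey. Simple connectivity is what makes an obstruction-theoretic comparison of the Ganea--Schwarz (fiberwise join) fibrations over $M_1\times M_1$ and $(M_1\#M_2)\times(M_1\#M_2)$ viable: the obstructions live in cohomology with coefficients in homotopy groups of the join fibers (not rational coefficients), untwisted only because $\pi_1(M_1)=1$, and one must also control higher obstructions and their indeterminacy. None of this is carried out in your proposal; ``presumably requires a more refined comparison'' is the entire content of the result.

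For context: this survey does not prove the theorem either -- it cites Dranishnikov and Sadykov \cite{DSconnectedsum} and explicitly declines to reproduce their argument, remarking only that it ``makes use of several results from homotopy theory.'' Your opening moves (the degree-one collapse map, injectivity on cohomology, comparison of sectioning problems over the two squares) do point in the direction of that argument, and your final reduction of the second inequality to the first by symmetry is fine. But as it stands the proposal establishes only the weaker weight-type estimates and leaves the crux unproven, so it is not a proof of the stated theorem.
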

The proof of this result makes use of several results from homotopy theory and shall not be reiterated here. Dranishnikov and Sadykov further apply Theorem \ref{TheoremTCconnsum} to show that $\tc(M)=4$ for any closed oriented simply connected manifold $M$ with $\dim M=4$ and $M \not\cong S^4$. The reader might now think of the following natural question which, to the author's best knowledge, has not been answered yet.

\begin{question}
	Does the assertion of Theorem \ref{TheoremTCconnsum} still hold without the hypothesis that the manifolds are simply connected?
\end{question}

\begin{remark}
C. Neofytidis has obtained explicit lower bounds for certain connected sums as consequences of the above Theorem \ref{TheoremNeofyt}.  We refer to \cite[Corollaries 1.2 and 1.3]{Neofyt} for the exact results.
\end{remark}
To conclude our overview of connected sums, we want to briefly mention a result by D. Cohen and L. Vandembroucq. We recall that the topological complexity of real projective spaces has been computed by M. Farber, S. Tabachnikov and S. Yuzvinsky in \cite{FTY}, see also \cite[Section 4.8]{FarberBook} for an outline of the proof and the explicit values of $\tc(\R P^n)$ for all $n \leq 23$. While these computations show that $\tc(\R P^n)$ lies strictly below its upper bound \eqref{EqUpperTCdimconn} for each $n \leq 23$, the connected sum of multiple copies of  $\R P^n$ behaves differently. 

\begin{theorem}[{\cite[Theorem 1.3]{CVconnected}}]
	Let $k,n \in \mathbb{N}$ with $k \geq 2$ and $n \geq 2$. Then 
	$$\tc\big(\#_{i=1}^k\R P^n\big) = 2n.$$
\end{theorem}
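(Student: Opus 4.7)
The plan is to match two bounds. The upper bound $\tc(M)\leq 2n$ for $M:=\#_{i=1}^k\R P^n$ is immediate from the standard dimensional estimate $\tc(X)\leq 2\dim X$ applied to the closed $n$-manifold $M$. The task therefore reduces to proving the sharp lower bound $\tc(M)\geq 2n$.

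I would first make the cohomology ring $H^*(M;\mathbb{F}_2)$ explicit. A Mayer--Vietoris computation along the $(n-1)$-spheres used in the connected sum, together with the observation that the pinch map $M\to \bigvee_{i=1}^k\R P^n$ induces an isomorphism in cohomological degrees below $n$, yields
\begin{equation*}
H^*(M;\mathbb{F}_2) \cong \mathbb{F}_2[x_1,\dots,x_k]/\bigl(x_ix_j,\; x_i^n-x_j^n,\; x_i^{n+1}\bigm| i\neq j\bigr),
\end{equation*}
with each $x_i\in H^1$ and common top class $\mu:=x_i^n\in H^n(M;\mathbb{F}_2)$. Writing $\bar{x}_i = 1\times x_i+x_i\times 1$ for the associated zero-divisors and exploiting the relation $x_ix_j=0$ for $i\neq j$, an easy induction produces the explicit formula $(\bar{x}_1\bar{x}_2)^m = x_1^m\times x_2^m+x_2^m\times x_1^m$ over $\mathbb{F}_2$. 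Multiplying once more by $\bar{x}_1$ yields the non-zero class
\begin{equation*}
(\bar{x}_1\bar{x}_2)^{n-1}\bar{x}_1 = \mu\times x_2^{n-1}+x_2^{n-1}\times\mu \neq 0,
\end{equation*}
so $\zcl_{\mathbb{F}_2}(M)\geq 2n-1$ and therefore $\tc(M)\geq 2n-1$.

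Improving the bound by one is the hard step and the main obstacle of the proof. Any direct attempt to extend the above product by a further $\bar{x}_j$ or $\bar{x}_j^2$ either yields a term containing a cross product $x_i^ax_j^b$ with $i\neq j$ and $a,b\geq 1$ (which vanishes) or produces a sum of the form $2\mu\times\mu\equiv 0\pmod 2$, so an ordinary $\mathbb{F}_2$-cup length argument cannot surpass $2n-1$. I would close the remaining unit by invoking Theorem \ref{Theoremvv2n}, aiming to prove that the Costa--Farber canonical class satisfies $\mathfrak{v}_M^{2n}\cap [M\times M]\neq 0 \in H_0(M\times M;I^{\otimes 2n}\otimes o_{M\times M})$, where $I$ is the augmentation ideal in $\Z[\pi_1(M)]$ and $\pi_1(M)\cong \Z/2*\cdots*\Z/2$ is the free product of $k$ copies of $\Z/2$. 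The substantive obstacle lies in the ensuing group-ring arithmetic: it is the non-abelian free-product structure of $\pi_1(M)$ that provides the extra weight invisible to $\mathbb{F}_2$-cup products, and carrying out the necessary computation with $I^{\otimes 2n}$ together with Poincar\'e--Lefschetz duality constitutes the technical heart of the argument, extending the Klein-bottle case $n=k=2$ treated in \cite{CVKlein} to arbitrary $n,k\geq 2$.
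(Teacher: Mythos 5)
Your upper bound and your $\mathbb{F}_2$-computation are fine: the ring presentation of $H^*(\#_{i=1}^k\R P^n;\mathbb{F}_2)$ obtained from the pinch map is correct, and the product $(\bar{x}_1\bar{x}_2)^{n-1}\bar{x}_1=\mu\times x_2^{n-1}+x_2^{n-1}\times\mu\neq 0$ does give $\zcl_{\mathbb{F}_2}\geq 2n-1$, hence $\tc(M)\geq 2n-1$. But the proposal stops exactly where the theorem begins. The whole content of the result is the final unit, and your text only states that one should prove $\mathfrak{v}_M^{2n}\cap[M\times M]\neq 0$ via Theorem \ref{Theoremvv2n} and that the ensuing computation with $I^{\otimes 2n}$ ``constitutes the technical heart of the argument''; no part of that computation is attempted, no candidate cocycle or dual homology cycle is produced, and no mechanism is identified for why the twisted coefficients see what $\mathbb{F}_2$-cup products provably cannot. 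Deferring the decisive step is a genuine gap: what you have written proves only $\tc(M)\geq 2n-1$, and the survey itself offers no proof to lean on (it merely cites Cohen--Vandembroucq), so the hard nonvanishing argument cannot be waved at.

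There is also a concrete error in the part you would need for that step. You assert $\pi_1(M)\cong\Z/2*\cdots*\Z/2$; this is true only for $n\geq 3$ (van Kampen on a connected sum needs simply connected gluing spheres of dimension $\geq 2$). For $n=2$ the manifold $\#_k\R P^2$ is the nonorientable surface with one-relator fundamental group $\langle a_1,\dots,a_k\mid a_1^2\cdots a_k^2\rangle$, which is torsion-free and in particular not a free product of copies of $\Z/2$ (for $k=2$ it is the Klein bottle group, not the infinite dihedral group). Since the group-ring arithmetic with the augmentation ideal is precisely the input to the criterion of Theorem \ref{Theoremvv2n}, setting it up over the wrong group would derail the argument in the surface case, which is historically the most delicate one. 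So the strategy you outline is the right one in spirit, but as written the proof is incomplete and its key ingredient is misidentified for $n=2$.
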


It seems to be a common phenomenon that the value of TC of a connected sum very often reaches its upper bound given by \eqref{EqUpperTCdimconn} even if the individual summands do not. A full explanation for this observation is still to be found.

\subsection{Further results} There is a plethora of results on the topological complexity of manifolds that is not considered in this survey and the choice of topics discussed here is obviously biased by the author's research interests. We feel obliged to at least mention some results that we did not explain in detail. The reader should keep in mind that the following list is by no means exhaustive.
\begin{itemize}
\item In \cite{HigherTCSeifert}, the authors compute lower bounds for the TC and for sequential TCs of large classes of three-dimensional Seifert manifolds. They show that for a large class of Seifert manifolds $M$, it holds that $\tc(M) \in \{5,6\}$ using weights of cohomology classes and computations of cohomology rings for Seifert manifolds.  
\item The authors of \cite{Goubault} introduce the notion of \emph{directed topological complextity} of d-spaces, which connects topological robotics with the field of directed algebraic topology. As discussed in \cite[Section 2]{Goubault}, any smooth manifold possesses a d-space structure.
Directed algebraic topology has various connections to the theory of concurrent processes and to control theory. 
\item Several considerations on connections between amenable category and topological complexity are discussed in \cite{CLM}. The amenable category $\cat_{\mathrm{Am}}(X)$ of a space $X$ is the minimal cardinality of an amenable cover of $X$. The authors show in \cite[Section 8]{CLM} that certain manifolds satisfy the estimate $\cat_{\mathrm{Am}}(X \times X) \leq \tc(X)$ and suggest to determine for which topological spaces this inequality holds.
\item In the introduction to \cite{BaryshPerplex}, Y. Baryshnikov suggests to study the relations between TC, which measures discontinuities of motion planning, and topological perplexity, which is introduced in loc. cit. and measures discontinuities of feedback stabilization in control systems.
\item Lower and upper bounds for the TC of real Grassmann manifolds are obtained by M. Radovanovi\'{c} in \cite{Radovanovic}. In particular, the author obtains the explicit values of certain zero-divisor cup lengths of real Grassmannians. 
\end{itemize}

\section{Length-minimizing motion planning in Riemannian manifolds}

The geodesic complexity of a metric space has been developed first by D. Recio-Mitter in \cite{RecioMitter} and is introduced and presented in the contribution of  D. Davis to this volume. We note that geodesic complexity is defined for an arbitrary geodesic space, i.e. a metric space in which any two points are connected by a length-minimizing path.

In this section, we want to study the geodesic complexity of \emph{complete Riemannian manifolds}, which has been investigated by M. Stegemeyer and the author in \cite{MSGC} and \cite{GCfiber}. By the Hopf-Rinow theorem, see e.g. \cite[Theorem 6.19]{LeeRiem} or \cite[Theorem 5.7.1]{Petersen}, complete Riemannian manifolds are examples of geodesic spaces. We want to discuss how methods from Riemannian geometry can be used to derive lower and upper bounds of their geodesic complexities.  A reader unfamiliar with some of the notions from Riemannian geometry used in this subsection can find detailed expositions in the textbooks  \cite{LeeRiem} and \cite{Petersen}.  

\subsection{Geodesics and cut loci in Riemannian manifolds} Let $(M,g)$ be a complete Riemannian manifold. A geodesic in $M$ is a curve $\gamma \in C^1(I,M)$, where $I$ is an open interval, which satisfies 
$$\nabla^g_{\gamma'(t)}(\gamma'(t))=0, $$
where $\nabla^g$ denotes the covariant derivative induced by $g$.

 The Hopf-Rinow theorem implies that every geodesic in $M$ might be extended as a geodesic to the whole real line, so we assume throughout the following that all geodesics are already given as curves $\R \to M$. 
 It is well-known that geodesics in $M$ are \emph{locally} length-minimizing curves, i.e. that their restrictions to sufficiently small intervals are length-minimizing, but they may lose this property when considered on arbitrarily large intervals. 
 
In the following, we call the restriction of a geodesic to $[0,1]$ a \emph{geodesic segment}. Given $p,q \in M$, we call a geodesic segment $\gamma:[0,1] \to M$ with $\gamma(0)=p$ and $\gamma(1)=q$  a \emph{minimal geodesic segment from $p$ to $q$} if 
$$L_g(\gamma) \leq L_g(\alpha) \qquad \forall \alpha \in C^{1}([0,1],M)\ \  \text{ with } \ \ \alpha(0)=p, \ \alpha(1)=q.$$ Here, $L_g:C^1([0,1],M) \to \R$, $L_g(\gamma) = \int_0^1 \|\gamma'(t)\|_{\gamma'(t)}\, dt$, denotes the length functional associated with the Riemannian metric $g$.

The key advantage of Riemannian manifolds over arbitrary geodesic spaces when it comes to geodesic motion planning is that \emph{the behaviour of geodesics in $M$ can be studied from the tangent bundle of $M$}, which is a smooth vector bundle. To make this more precise, we first introduce some additional notation from Riemannian geometry.

For each $p \in M$ we let $\exp_p:T_pM \to M$ denote the Riemannian exponential map of $g$ and we recall that for any $p \in M$ and $v \in T_pM$, the curve $\R \to M$, $t \mapsto \exp_p(tv)$, is a well-defined geodesic. 
We further consider the \emph{extended exponential map of $M$}, given as
$$\mathrm{Exp}\colon TM \to M \times M, \qquad \mathrm{Exp}(p,v) = (p, \exp_p(v)).$$
We next recall the definition of cut loci and tangent loci.

\begin{definition} Let $(M,g)$ be a Riemannian manifold.
\begin{enumerate}[a)]
	\item Let $p \in M$, let $\gamma:\R \to M$ be a geodesic with $\gamma(0)=p$ and put 
	$$t_*:= \sup\{ t \in (0,+\infty) \ | \ \gamma|_{[0,t]} \text{ is length-minimizing}\}.$$
	A point in $M$ of the form $\gamma(t_*)$ is then called a \emph{cut point of $p$}. The tangent vector $t_* \cdot \gamma'(0) \in T_pM$ is called a \emph{tangent cut point of $p$}.
	\item The \emph{cut locus of $p\in M$} is given by 
$$ \mathrm{Cut}_p(M)= \{q \in M \ | \ q \text{ is a cut point of } p\}$$
and the \emph{tangent cut locus of $p$} by 
$$\widetilde{\mathrm{Cut}}_p(M)= \{v \in T_pM \ | \ v \text{ is a tangent cut point of } p\}.$$
\item The \emph{total cut locus of $M$} and the \emph{total tangent cut locus of $M$} are given by 
 $$\mathrm{Cut}(M) = \bigcup_{p \in M} (\{p\} \times \mathrm{Cut}_p(M)) \subset M \times M, \qquad \widetilde{\mathrm{Cut}}(M) = \bigcup_{p \in M} \widetilde{\mathrm{Cut}}_p(M) \subset TM.$$ 	
 \end{enumerate}
\end{definition}
		
It holds by construction and by definition of Riemannian exponential maps that
$$\exp_p\big(\widetilde{\mathrm{Cut}}_p(M)\big) = \mathrm{Cut}_p(M) \qquad \forall p \in M,$$
from which one derives that
\begin{equation}
\label{EqExpCut}
\mathrm{Exp}\big(\widetilde{\mathrm{Cut}}(M)\big)=\mathrm{Cut}(M).
\end{equation}

\begin{remark}
\label{RemarkCutLoci}
\begin{enumerate}
	\item We note that the definitions of cut loci in metric and Riemannian geometry are slightly different from another. To specify, we briefly recall the Riemannian definition. Let $M$ be a Riemannian manifold and let $p \in M$. A point $q$ lies in $\mathrm{Cut}_p(M)$ if and only if one of the following holds:
	\begin{itemize}
		\item $q$ is a conjugate point of $p$, i.e. there is a minimal geodesic segment $\gamma$ from $p$ to $q$ along which there exists a non-zero Jacobi field which vanishes in $p$ and $q$, 
		\item there exists more than one minimal geodesic segment from $p$ to $q$. 
	\end{itemize}
In metric geometry the cut locus of a point is usually defined as the set of points with the second of these properties.
\item The cut locus of a point in a complete Riemannian manifold is always closed and of Lebesgue measure zero, see \cite[Theorem 10.34.(a)]{LeeRiem}. 
\item Unfortunately, these are the only general results about the structure of cut loci as they might be wildly different subsets of $M$ for different choices of Riemannian metrics, or even of two different points with respect to the same metric. A result which illustrates the possible complications was given by H. Gluck and D. Singer in \cite[Theorem A]{GluckSinger}. They have shown that for \emph{any} manifold $M$ with $\dim M \geq 2$ and for any $p \in M$ there exists a Riemannian metric on $M$ for which $\mathrm{Cut}_p(M)$ is not triangulable. 
\end{enumerate}
\end{remark}

\subsection{Geodesic complexity of Riemannian manifolds} Without further ado, we want to recall the definition of geodesic complexity as introduced by D. Recio-Mitter in \cite{RecioMitter}, but formulated in the Riemannian setting.
\begin{definition}[{\cite[Definition 1.7]{RecioMitter}}]
Let $(M,g)$ be a complete Riemannian manifold and let $GM \subset PM$ denote the subspace of minimal geodesic segments.
\begin{enumerate}[a)]
	\item Let $A \subset M \times M$. We call a map $s:A \to GM$ a \emph{geodesic motion planner on $A$} if 	$$(s(p,q))(0)=p \qquad \text{and} \qquad (s(p,q))(1)=q \qquad \forall (p,q) \in A.$$
	\item  The \emph{geodesic complexity of $(M,g)$} is denoted by ${\sf {GC}}(M,g)$ and defined as the minimal value of $r \in \mathbb{N}$, such that there is a decomposition $M \times M = A_0 \sqcup A_1 \sqcup \dots \sqcup A_r$ into locally compact subsets with the property that for each $i \in \{0,1,\dots,r\}$ there exists a continuous geodesic motion planner on $A_i$.
	\end{enumerate}
\end{definition}


We want to restrict our attention to \emph{closed} Riemannian manifolds, where we note that every closed Riemannian manifold is complete, see \cite[Corollary 6.22]{LeeRiem}. If $M$ is a closed manifold, then each ray in $T_pM$ emanating from the origin contains a unique element of $\widetilde{\mathrm{Cut}}_p(M)$. Moreover, these points assemble in such a way that $\widetilde{\mathrm{Cut}}_p(M)$ is homeomorphic to a $(\dim M-1)$-dimensional sphere and if we put
$$K_p =\{ t \cdot v \in T_pM \ | \ v \in \widetilde{\mathrm{Cut}}_p(M), \ t \in [0,1]\} \qquad \forall p \in M, $$
and set $K:= \bigcup_{p \in M} K_p$, then $K$ is a bundle over $M$ whose fiber is homeomorphic to a ($\dim M$)-dimensional ball. In terms of geodesic motion planning, the crucial observation underlying the results of this section lies in the following statement. 
\begin{lemma}
\label{LemmaGeod}
Let $M$ be a closed Riemannian manifold and let $A \subset M \times M$. The following statements are equivalent:
\begin{enumerate}[(i)]
	\item There exists a continuous geodesic motion planner $s: A \to GM$.
	\item There exists a continuous local section $\sigma: A \to K$ of $\mathrm{Exp}|_K: K \to M \times M$.
\end{enumerate}
\end{lemma}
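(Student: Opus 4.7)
The plan is to construct a homeomorphism $\Phi\colon K \to GM$ that intertwines $\mathrm{Exp}|_K$ with the endpoint map $\mathrm{ev}_{0,1}\colon GM \to M \times M$, $\gamma \mapsto (\gamma(0),\gamma(1))$. Once this is in place, (i)~$\Leftrightarrow$~(ii) becomes essentially tautological, since sections of $\mathrm{Exp}|_K$ and geodesic motion planners are simply the two sides of the homeomorphism over $A$.

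The natural candidate is
\[ \Phi(p,v)(t) := \exp_p(tv), \qquad (p,v) \in K, \ t \in [0,1]. \]
First I would check that $\Phi$ lands in $GM$: by definition of $K_p$, every $v \in K_p$ can be written $v = su$ with $u \in \widetilde{\mathrm{Cut}}_p(M)$ and $s \in [0,1]$, so $\Phi(p,v)$ is a reparametrization of the initial portion $r \mapsto \exp_p(ru)$ of a geodesic on $[0,s]$, which is length-minimizing as it terminates at or before the cut point. Uniqueness of geodesics given initial data then shows $\Phi$ is a bijection, with inverse $\Psi(\gamma) := (\gamma(0),\gamma'(0))$; the value $\gamma'(0)$ lies in $K_{\gamma(0)}$ precisely because $\gamma|_{[0,1]}$ is minimizing. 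A direct check using $\Phi(p,v)(0) = p$, $\Phi(p,v)(1) = \exp_p(v)$, and the formula $\mathrm{Exp}(p,v) = (p,\exp_p(v))$ yields the compatibility $\mathrm{Exp}|_K = \mathrm{ev}_{0,1} \circ \Phi$.

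Continuity of $\Phi$ is clear from smoothness of $\exp$ combined with the exponential law for mapping spaces, using local compactness of $[0,1]$. To promote $\Phi$ to a homeomorphism, I would note that $GM \subset PM$ inherits the Hausdorff property and argue that $K$ is compact, so that the continuous bijection $\Phi$ from a compact space to a Hausdorff space is automatically a homeomorphism. Compactness of $K$ will rest on the classical fact that on a \emph{closed} Riemannian manifold the cut-distance function $\tau\colon SM \to (0,\infty)$ is continuous and therefore bounded; this realises $K$ as a closed subset of a disk bundle of fixed radius in $TM$ over the compact base $M$. This is the only step that genuinely requires closedness (rather than mere completeness) of $M$, and appears to me to be the main technical point of the argument.

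With $\Phi$ established as a homeomorphism, the equivalence follows by transport. Given a continuous local section $\sigma\colon A \to K$ of $\mathrm{Exp}|_K$, the composition $s := \Phi \circ \sigma\colon A \to GM$ is continuous and satisfies $\mathrm{ev}_{0,1}(s(p,q)) = \mathrm{Exp}|_K(\sigma(p,q)) = (p,q)$, yielding (i). Conversely, given a continuous geodesic motion planner $s\colon A \to GM$, the composition $\sigma := \Phi^{-1} \circ s$ is a continuous local section of $\mathrm{Exp}|_K$ because $\mathrm{Exp}|_K \circ \sigma = \mathrm{ev}_{0,1} \circ s = \mathrm{id}_A$, yielding (ii).
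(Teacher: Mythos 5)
Your argument is correct, but it handles the key technical point differently from the paper. The paper's proof of (ii)~$\Rightarrow$~(i) is the same as yours (compose the section with $(p,v)\mapsto (t\mapsto \exp_p(tv))$), but for (i)~$\Rightarrow$~(ii) the paper works directly with the map $\nu\colon GM\to TM$, $\nu(\gamma)=\gamma'(0)$, whose continuity is established by an explicit computation (quoted from \cite[Proposition 3.1]{MSGC}), together with the observation $\nu(GM)\subset K$; the desired section is then $\nu\circ s$. You instead package everything into a single map $\Phi\colon K\to GM$, $\Phi(p,v)(t)=\exp_p(tv)$, check it is a continuous bijection intertwining $\mathrm{Exp}|_K$ with the endpoint map, and obtain continuity of $\Phi^{-1}$ (which is essentially $\nu$) for free from compactness of $K$ and Hausdorffness of $GM$. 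Both routes are sound: your compactness argument is more elementary and avoids the direct continuity estimate for $\gamma\mapsto\gamma'(0)$, but it genuinely uses that $M$ is closed (boundedness and continuity of the cut-time function on $SM$, so that $K$ is a compact subset of $TM$), whereas the paper's continuity-of-$\nu$ argument works for arbitrary complete manifolds and is reused later (e.g.\ in Proposition \ref{PropNonexist}), so it is the more modular choice. Two small points you should make explicit if writing this up: the closedness (or, more cleanly, the description of $K$ as the continuous image of the compact set obtained from the unit sphere bundle, the cut-time function and the interval $[0,1]$) that gives compactness of $K$, and the verification that $\gamma'(0)\in K_{\gamma(0)}$ for every minimal segment, which is what makes $\Phi$ surjective.
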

\begin{proof}
	(ii) $\Rightarrow$ (i): \enskip By definition of $\mathrm{Exp}$, it follows that $s(p,q) \in K_p$ for all $(p,q) \in A$. Thus, the following map is well-defined and by the properties of exponential maps its image indeed lies in $GM$:
	$$s\colon A \to GM, \qquad (s(p,q))(t)= \exp_p(t \cdot \sigma(p,q)) \quad \forall t \in [0,1], \ (p,q) \in A,$$
	Clearly, $s$ is continuous and we compute for all $(p,q) \in A$ that 
	$$(\pi \circ s)(p,q)= ((s(p,q))(0),(s(p,q))(1))= (\exp_p(0),\exp_p(\sigma(p,q)))= (p,q).$$
	Thus, $s$ is a continuous geodesic motion planner. 
	
	(i) $\Rightarrow$ (ii): \enskip It follows from a standard computation carried out in \cite[Proposition 3.1]{MSGC} that the map $\nu:GM \to TM$, $\nu(\gamma)=\gamma'(0)$, is continuous, and from the properties of minimal geodesics that $\nu(GM) \subset K$.  If we define $\sigma:A \to K$	, $\sigma := \nu \circ s$, then $\sigma$ is again continuous and we obtain that
\begin{align*}
(\mathrm{Exp}\circ \sigma)(p,q)&= \mathrm{Exp}(v(\sigma(p,q))= \mathrm{Exp}((s(p,q))'(0))\\
&= (p,\exp_p((s(p,q))'(0)))= (p,(s(p,q))(1))=(p,q),
\end{align*}
	where we have used the defining property of Riemannian exponential maps.
\end{proof}

This lemma immediately yields an alternative description of geodesic complexity.

\begin{cor}
\label{CorGCTM}
Let $(M,g)$ be a closed Riemannian manifold. Then 	${\sf {GC}}(M,g)$ is the minimal value of $n \in \mathbb{N}$, for which there exist $A_0,A_1,\dots,A_n \subset M \times M$ with 
$$A_0 \sqcup A_1 \sqcup \dots \sqcup A_n = M \times M,$$
such that for each $i \in \{0,1,\dots,n\}$ there exists a continuous local section $A_i \to K$ of $\mathrm{Exp}|_K$.
\end{cor}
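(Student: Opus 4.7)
The plan is to argue that the corollary is an immediate reformulation of the definition of ${\sf {GC}}(M,g)$ via the equivalence established in Lemma \ref{LemmaGeod}, with the decomposition of $M\times M$ kept fixed throughout. Since Lemma \ref{LemmaGeod} is a pointwise statement applied to a single subset $A \subset M \times M$ that produces a continuous geodesic motion planner from a continuous local section of $\mathrm{Exp}|_K$ and vice versa, the topology of the pieces $A_i$ (in particular, their local compactness) is never altered by passing back and forth. Hence both ``minimal $n$'' quantities are computed over the same class of decompositions.

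First I would assume that ${\sf {GC}}(M,g)=r$ and pick a witnessing decomposition $M\times M = A_0 \sqcup A_1 \sqcup \dots \sqcup A_r$ into locally compact subsets, together with continuous geodesic motion planners $s_i \colon A_i \to GM$. Applying the implication (i)$\Rightarrow$(ii) of Lemma \ref{LemmaGeod} to each $A_i$ produces continuous local sections $\sigma_i \colon A_i \to K$ of $\mathrm{Exp}|_K$. This exhibits a decomposition of the desired form with $n=r$, so the minimum on the right-hand side is at most ${\sf {GC}}(M,g)$.

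Conversely, I would start with a decomposition $M\times M = A_0 \sqcup A_1 \sqcup \dots \sqcup A_n$ into locally compact pieces equipped with continuous local sections $\sigma_i \colon A_i \to K$ of $\mathrm{Exp}|_K$. The implication (ii)$\Rightarrow$(i) of Lemma \ref{LemmaGeod}, applied to each $A_i$, gives continuous geodesic motion planners $s_i \colon A_i \to GM$ (defined by $(s_i(p,q))(t) = \exp_p(t\cdot \sigma_i(p,q))$), which fits the definition of ${\sf {GC}}(M,g)$ and yields ${\sf {GC}}(M,g) \leq n$. Combining the two estimates gives the claimed equality.

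I do not expect any serious obstacle, since Lemma \ref{LemmaGeod} has already done all the genuine work; the only point to keep straight is that the same subsets $A_i$ are used on both sides of the equivalence, so the regularity hypothesis (locally compact) built into the definition of ${\sf {GC}}(M,g)$ is preserved automatically. In the write-up I would therefore simply present the two short implications in succession and observe that neither modifies the underlying cover.
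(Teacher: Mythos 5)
Your proposal is correct and matches the paper exactly: the paper gives no separate argument, stating only that Corollary \ref{CorGCTM} follows immediately from Lemma \ref{LemmaGeod}, which is precisely your two-implication reformulation applied piecewise to a fixed (locally compact) decomposition of $M \times M$. Nothing further is needed.
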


Away from the total tangent cut locus  the restriction 
$$ \mathrm{Exp}|_{K \smallsetminus \widetilde{\mathrm{Cut}}(M)}: K \smallsetminus \widetilde{\mathrm{Cut}}(M) \to (M \times M)\smallsetminus \mathrm{Cut}(M),$$
is a well-defined homeomorphism, so its inverse is a continuous local section of $\mathrm{Exp}|_K$. Thus, by Lemma \ref{LemmaGeod}, there exists a continuous geodesic motion  planner on the complement of $\mathrm{Cut}(M)$. This motion planner was first studied and discussed by Z. B\l{}aszczyk and J. Carrasquel in \cite{BlaszCarras}. The difficulty in determining geodesic complexity lies in estimating the necessary number of local sections of $\mathrm{Exp}$ over $\mathrm{Cut}(M)$. 

\subsection{Fibered decompositions of cut loci} To derive upper bounds for the geodesic complexity of certain manifolds, we want to introduce a concept studied by M. Stegemeyer and the author in \cite{GCfiber}. \medskip 
By \eqref{EqExpCut}, we may consider the restriction 
$$F:= \mathrm{Exp}|_{\widetilde{\mathrm{Cut}}(M)}: \widetilde{\mathrm{Cut}}(M) \to \mathrm{Cut}(M).$$
Unfortunately, this restriction $F$ is not particularly well-behaved from a topological point of view. Since cut loci and tangent cut loci might be wildly different for two points in the same manifold, $F$ will in general not be a fibration. However, it turns out that we may occasionally decompose $F$ into a finite number of fibrations in the following sense. 
\begin{definition}[{\cite[Definition 3.1]{GCfiber}}]
A locally compact decomposition $\mathrm{Cut}(M) = A_1 \sqcup A_2 \sqcup \dots \sqcup A_r$ is called a \emph{fibered decomposition of $\mathrm{Cut}(M)$}, if the map $F_i:\widetilde{A}_i \to A_i$, $F_i := \mathrm{Exp}|_{\widetilde{A}_i}$, is a fibration for each $i \in \{1,2,\dots,r\}$, where we put 
$$\widetilde{A}_i := \widetilde{\mathrm{Cut}}(M) \cap \mathrm{Exp}^{-1}(A_i).$$
\end{definition}
Using Corollary \ref{CorGCTM}, we can derive the following statement, which shows the usefulness of fibered decompositions in our context. 

\begin{theorem}[{\cite[Theorem 3.2]{GCfiber}}]
\label{TheoremGCUpper}
Let $(M,g)$ be a complete Riemannian manifold and let $\{A_1,\dots,A_r\}$ be a fibered decomposition of $\mathrm{Cut}(M)$. Then 
$${\sf {GC}}(M,g) \leq \sum_{i=1}^r\secat(\mathrm{Exp}|_{\widetilde{A}_i}\colon \widetilde{A}_i \to A_i)+r. $$
\end{theorem}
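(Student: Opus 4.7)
The plan is to combine Corollary \ref{CorGCTM} with the definition of sectional category applied to each fibration $F_i$, producing enough locally compact pieces of $M\times M$ over which $\mathrm{Exp}|_K$ admits continuous local sections.

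First I would treat the complement of the cut locus as a single piece. As already observed before Subsection 2.3, the restriction $\mathrm{Exp}|_{K\smallsetminus \widetilde{\mathrm{Cut}}(M)}\colon K\smallsetminus \widetilde{\mathrm{Cut}}(M) \to (M\times M)\smallsetminus \mathrm{Cut}(M)$ is a homeomorphism, so its inverse yields a continuous global section of $\mathrm{Exp}|_K$ over $A_0:=(M\times M)\smallsetminus \mathrm{Cut}(M)$. This set is open in $M\times M$, hence locally compact.

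Next, for each $i\in\{1,\dots,r\}$ set $k_i:=\secat(F_i)$. By definition of sectional category there exists an open cover $U_0^{(i)},\dots,U_{k_i}^{(i)}$ of $A_i$ such that $F_i$ admits a continuous local section over each $U_j^{(i)}$. From this cover I would extract a disjoint decomposition
$$B_j^{(i)}:=U_j^{(i)}\smallsetminus\bigcup_{l<j}U_l^{(i)},\qquad j=0,1,\dots,k_i,$$
of $A_i$ into $k_i+1$ locally compact subsets, each of which still carries a continuous local section of $F_i$ obtained by restriction. Crucially, since $\widetilde{A}_i\subset\widetilde{\mathrm{Cut}}(M)\subset K$, every continuous local section of $F_i$ is automatically a continuous local section of $\mathrm{Exp}|_K$ over the same subset of $M\times M$.

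Assembling everything, I obtain
$$M\times M \;=\; A_0\;\sqcup\;\bigsqcup_{i=1}^r\bigsqcup_{j=0}^{k_i}B_j^{(i)},$$
a decomposition into $1+\sum_{i=1}^r(k_i+1)=1+r+\sum_{i=1}^r\secat(F_i)$ locally compact subsets, each admitting a continuous local section of $\mathrm{Exp}|_K$. By Corollary \ref{CorGCTM} this gives
$${\sf GC}(M,g)\;\leq\; r+\sum_{i=1}^r\secat(F_i),$$
which is precisely the claimed inequality. The main technical obstacle is the passage from the open-cover definition of $\secat$ to the locally-compact-decomposition requirement of ${\sf GC}$: one must verify that the differences $B_j^{(i)}$ are indeed locally compact subsets of $M\times M$ (not merely of $A_i$), which uses that the $A_i$ themselves are locally compact in $M\times M$ and that intersections of open with closed subsets of a locally compact Hausdorff space remain locally compact. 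Once this point-set check is in hand, the rest of the argument is essentially bookkeeping.
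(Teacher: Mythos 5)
Your argument is correct and is exactly the derivation the paper indicates: combine Corollary \ref{CorGCTM} with the open covers realizing $\secat(F_i)$ on each $A_i$, converting them into disjoint locally compact pieces by the difference trick, and add the section of $\mathrm{Exp}|_K$ over $(M\times M)\smallsetminus\mathrm{Cut}(M)$; your count $1+\sum_i(\secat(F_i)+1)$ pieces gives precisely the claimed bound, and your point-set justification of local compactness of the $B_j^{(i)}$ is the right one. The only caveat, inherited from the survey's own presentation, is that Corollary \ref{CorGCTM} and the bundle $K$ are formulated there for \emph{closed} manifolds, so for a merely complete $M$ one should take $K_p=\{v\in T_pM \mid t\mapsto \exp_p(tv),\ t\in[0,1],\ \text{is minimizing}\}$; with this adjustment your argument goes through verbatim.
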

This result gives us the possibility of bounding ${\sf {GC}}(M,g)$ by sectional categories, which can in turn be estimated using a big topological toolbox. Upon first sight, the existence of a fibered decomposition of the total cut locus might seem like wishful thinking, but indeed there exist such fibered decompositions for certain classes of \emph{homogeneous} Riemannian manifolds, i.e. Riemannian manifolds whose isometry group acts transitively on them. 

Since isometries map geodesics to geodesics, one checks that if $\phi:M \to M$ is an isometry, then
$$\mathrm{Cut}_{\phi(p)}(M) = \phi(\mathrm{Cut}_p(M)) \qquad \forall p \in M. $$
Thus, if the isometry group of $(M,g)$ acts transitively on $M$, then all cut loci of points in $M$ are homeomorphic to one another. Assuming a little additional condition, we can use a decomposition of the cut locus of a particular point to construct a fibered decomposition of the total cut locus using isometries. 

\begin{definition}
\label{DefIsotropInv}
Let $(M,g)$ be a closed Riemannian manifold with isometry group $G$. Let $p \in M$ and let $G_p$ denote the isotropy group of the usual $G$-action on $M$ in $p$. We call a locally compact decomposition $\{B_1,\dots,B_r\}$ of $\mathrm{Cut}_p(M)$ \emph{isotropy-invariant} if $g \cdot B_i \subset B_i$ for all $g \in G_p$, $i \in \{1,2,\dots,r\}$. 
\end{definition}

\begin{lemma}
\label{LemmaIsotropInv}
	Let $(M,g)$ be a homogeneous closed Riemannian manifold with isometry group $G$. Let $p \in M$, such that $\mathrm{Cut}_p(M)$ admits an isotropy-invariant decomposition $\{B_1,\dots,B_r\}$, for which 
	$$\exp_p|_{\widetilde{B}_i}: \widetilde{B}_i\to B_i$$
	is a fibration for each $i \in \{1,2,\dots,r\}$, where $\widetilde{B}_i:= \exp_p^{-1}(B_i) \cap \widetilde{\mathrm{Cut}}_p(M)$. Then $\{A_1,\dots,A_r\}$ is a fibered decomposition of $\mathrm{Cut}(M)$, where 
$$A_i =  \{(g \cdot p, g \cdot b) \in M \times M \ | \ g \in G, \ b \in  B_i\} \qquad \forall i \in \{1,2,\dots,r\}.$$
\end{lemma}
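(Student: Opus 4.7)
I would separate the claim into two pieces: first, that $\{A_1,\ldots,A_r\}$ is a locally compact decomposition of $\mathrm{Cut}(M)$; second, that $F_i := \mathrm{Exp}|_{\widetilde{A}_i}\colon \widetilde{A}_i \to A_i$ is a fibration for each $i$. The isotropy-invariance hypothesis will play a crucial role in both, but most conspicuously in the second.

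For the decomposition, coverage follows from homogeneity: given $(x,y)\in\mathrm{Cut}(M)$, choose $g \in G$ with $g\cdot p = x$; since isometries send cut loci to cut loci, $y \in \mathrm{Cut}_x(M) = g\cdot\mathrm{Cut}_p(M)$, so $y = g\cdot b$ for some $b \in B_i$, placing $(x,y)$ in $A_i$. For disjointness, if $(g\cdot p, g\cdot b) = (h\cdot p, h\cdot b')$ with $b \in B_i$ and $b' \in B_j$, then $h^{-1}g \in G_p$ and isotropy-invariance forces $b' = (h^{-1}g)\cdot b \in B_i$, so $i=j$. Local compactness of each $A_i$ will fall out of the local triviality over $M$ established below, combined with local compactness of the $B_i$.

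For the fibration property, the plan is to exploit the $G$-action to trivialize. By Myers--Steenrod $G$ is a compact Lie group acting smoothly on $M$, so the orbit map $G \to M$, $g \mapsto g\cdot p$, is a smooth principal $G_p$-bundle. Cover $M$ by open sets $U$ carrying smooth local sections $\sigma\colon U \to G$ with $\sigma(x)\cdot p = x$. I would then define
\begin{equation*}
\phi_U\colon U \times B_i \to A_i \cap (U \times M), \qquad (x,b) \mapsto (x, \sigma(x)\cdot b),
\end{equation*}
and, using the fiberwise isomorphism $d\sigma(x)_p\colon T_pM \to T_xM$,
\begin{equation*}
\widetilde{\phi}_U\colon U \times \widetilde{B}_i \to \widetilde{A}_i \cap TU, \qquad (x,w) \mapsto (x, d\sigma(x)_p(w)).
\end{equation*}
Well-definedness is the place where isotropy-invariance is essential: if $(x,y) \in A_i$ with $x \in U$ and $y = h\cdot b_0$, $b_0 \in B_i$, $h\cdot p = x$, then $\sigma(x)^{-1}h \in G_p$ and so $\sigma(x)^{-1}\cdot y = (\sigma(x)^{-1}h)\cdot b_0 \in B_i$, providing the continuous inverse; the same remark applies to $\widetilde{\phi}_U$ after differentiating. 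The exponential equivariance identity $\exp_{g\cdot p}\circ dg_p = g\circ \exp_p$ applied with $g = \sigma(x)$ shows that the diagram
\begin{equation*}
\begin{tikzcd}
U \times \widetilde{B}_i \arrow[r, "\widetilde{\phi}_U"] \arrow[d, "\mathrm{id}_U \times (\exp_p|_{\widetilde{B}_i})"'] & \widetilde{A}_i \cap TU \arrow[d, "F_i"] \\
U \times B_i \arrow[r, "\phi_U"'] & A_i \cap (U \times M)
\end{tikzcd}
\end{equation*}
commutes, so $F_i$ is locally conjugate to $\mathrm{id}_U \times (\exp_p|_{\widetilde{B}_i})$, which is a fibration by hypothesis.

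To close the argument, the sets $A_i \cap (U \times M)$ form an open cover of $A_i$ as $U$ ranges over the trivializing neighborhoods, and $A_i$ is paracompact as a subspace of the metric space $M \times M$; hence the Hurewicz uniformization theorem (fibrations are local on a paracompact base) yields that $F_i$ is globally a fibration. The step I expect to require the most care is the well-definedness check for $\phi_U$ and $\widetilde{\phi}_U$: two different local sections over the same point differ by an element of $G_p$, and without the isotropy-invariance assumption these trivializations would mix the pieces of the decomposition, so the whole strategy rests on that hypothesis being carefully deployed.
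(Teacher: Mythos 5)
Your argument is correct and is essentially the proof this survey leaves to \cite{GCfiber}: use Myers--Steenrod and local sections of the principal $G_p$-bundle $G \to G/G_p \cong M$ to exhibit $\mathrm{Exp}|_{\widetilde{A}_i}$ over $A_i \cap (U\times M)$ as conjugate to $\mathrm{id}_U \times \exp_p|_{\widetilde{B}_i}$ (with isotropy-invariance guaranteeing the trivializations are well defined and that the $A_i$ are disjoint), and then globalize via the Hurewicz local-to-global theorem over the paracompact (metrizable) base, noting $\mathrm{Exp}^{-1}(U\times M)\cap\widetilde{\mathrm{Cut}}(M)\cap\mathrm{Exp}^{-1}(A_i)=\widetilde{A}_i\cap TU$ so the local restrictions are the right ones. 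No gaps worth flagging; the local-compactness and equivariance checks you sketch go through exactly as you indicate.
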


This lemma gives us a machinery to produce fibered decompositions of total cut loci, which is applicable to several kinds of homogeneous manifolds.
\begin{example}
\label{ExampleIsoInv}
	\begin{enumerate}
		\item In \cite{Sakai1} and \cite{Sakai2}, T. Sakai has described a neat decomposition of the cut loci of an irreducible compact simply connected symmetric space in terms of a root system of the space. This decomposition turns out to fulfill the conditions of Lemma \ref{LemmaIsotropInv}, so we derive that the total cut locus of an irreducible compact simply connected symmetric space admits a fibered decomposition.
		\item Consider the three-dimensional lens space $L(p,1)=S^3/\Z_p$, where $p \geq 3$, with a metric of constant curvature as a homogeneous Riemannian manifold via the action of the Lie group $S^3\cong SU(2)$. It is shown in \cite[Section 5]{GCfiber} that the cut locus of a point in $L(p,1)$ admits an isotropy-invariant decomposition to which Lemma \ref{LemmaIsotropInv} is applicable.
		\item We consider the two-dimensional torus $T^2$ with a flat Riemannian metric inherited from $\R^2$ by viewing $T^2=\R^2/\Gamma$, where $\Gamma \subset \R^2$ is a lattice whose generating vectors are not orthogonal to each other. The cut locus and the tangent cut locus of a point $p \in T^2$ are described in \cite[Example 2.114.e)]{GHL}. Roughly, $\mathrm{Cut}_p(T^2)$ consists of two points $q_1$ and $q_2$ which are connected by three pairwise disjoint arcs. The tangent cut locus takes the form of a hexagon, in which any opposite edges are mapped onto the same connecting arc under the exponential map. Consider the decomposition $\mathrm{Cut}_p(T^2)=B_1 \sqcup B_2$, where $B_1=\{q_1,q_2\}$ and $B_2=\mathrm{Cut}_p(T^2)\smallsetminus \{q_1,q_2\}$. The preimages of $q_1$ and $q_2$ consist of three points each, the corners of the hexagon. $B_2$ consists of three pairwise disjoint arcs whose preimages each consist of two disjoint arcs. One checks without difficulties that this decomposition is indeed isotropy-invariant and satisfies the assumptions of Lemma \ref{LemmaIsotropInv}.
	\end{enumerate}
\end{example}

Applying Theorem \ref{TheoremGCUpper} to the fibered decompositions discussed in Example \ref{ExampleIsoInv}, one obtains the following upper bounds. Details of the computations are found in \cite{GCfiber}. 

\begin{itemize}
	\item (\cite[Theorem 4.6]{GCfiber}) \enskip Consider the complex projective space $\mathbb{C}P^n$ with the Fubini-Study metric. Then
	$${\sf {GC}}(\C P^n,g_{FS})=2n = \tc(\C P^n).$$
	\item (\cite[Example 4.5]{GCfiber}) \enskip Consider the complex Grassmann manifold $G_2(\C^4)$ as a compact symmetric space. Then 
	$${\sf {GC}}(G_2(\C^4),g_{\mathrm{sym}}) \leq 15.$$
	\item (\cite[Theorem 5.8]{GCfiber})\enskip  For the lens space $L(p,1)$ with a constant curvature metric $g_c$, we obtain $${\sf {GC}}(L(p,1),g_c)\leq 6.$$
	\item (\cite[Theorem 7.2]{MSGC})\enskip Given any flat metric $g_f$ on $T^2$, it holds that ${\sf {GC}}(T^2,g_f)=2$.
\end{itemize}

\subsection{Lower bounds for geodesic complexity} As we have seen in the first section, lower bounds for TC are often derived from cohomology rings of the spaces under investigation. The general philosophy is vaguely described by saying that certain cohomology classes provide obstructions to the existence of continuous motion planners. In the study of \emph{geodesic} complexity, one needs to find \emph{geometric obstructions} to the existence of continuous geodesic motion planners. An example of such an obstruction is given in the following statement, which was initially observed in \cite[Remark 3.17]{RecioMitter}.

\begin{proposition}[{\cite[Proposition 3.7]{MSGC}}]
\label{PropNonexist}
Let $(M,g)$ be a complete Riemannian manifold, let $p \in M$ and $q \in \mathrm{Cut}_p(M)$, such that there are two distinct minimal geodesic segments $\gamma_1,\gamma_2 \in GM$ from $p$ to $q$. Let $U$ be an open neighborhood of $q$. Then there is no continuous geodesic motion planner on $\{p\} \times U$.
	\end{proposition}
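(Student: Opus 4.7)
The plan is to argue by contradiction using the tangent-bundle reformulation from Lemma~\ref{LemmaGeod}. Suppose a continuous geodesic motion planner $s\colon \{p\}\times U \to GM$ exists. Composing with the continuous initial-velocity map $\nu\colon GM \to TM$, $\nu(\gamma)=\gamma'(0)$, whose continuity is recorded in the proof of Lemma~\ref{LemmaGeod}, produces a continuous map $\sigma\colon U \to K_p$, $\sigma(q'):=(s(p,q'))'(0)$, which is a section of $\exp_p|_{K_p}$ in the sense that $\exp_p(\sigma(q'))=q'$ for all $q'\in U$.

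Relabeling if necessary, I may assume $\sigma(q)=v_1$, where $v_i:=\gamma_i'(0)\in \widetilde{\mathrm{Cut}}_p(M)$ for $i=1,2$, with $v_1\neq v_2$. The key geometric input is the standard fact (the fiberwise version of the homeomorphism $\mathrm{Exp}|_{K\setminus \widetilde{\mathrm{Cut}}(M)}\colon K\setminus \widetilde{\mathrm{Cut}}(M) \to (M\times M)\setminus \mathrm{Cut}(M)$ noted in Section~2) that $\exp_p$ restricts to a homeomorphism
$$\exp_p|_{K_p\setminus \widetilde{\mathrm{Cut}}_p(M)}\colon K_p\setminus \widetilde{\mathrm{Cut}}_p(M) \xrightarrow{\ \cong\ } M\setminus \mathrm{Cut}_p(M),$$
and that every point of $\widetilde{\mathrm{Cut}}_p(M)$ lies in the topological boundary of $K_p\setminus \widetilde{\mathrm{Cut}}_p(M)$ in $T_pM$; this last property is immediate from the ray-wise description of $K_p$, since each ray from the origin meets $\widetilde{\mathrm{Cut}}_p(M)$ in exactly one point. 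Consequently I can pick a sequence $(w_n)$ in $K_p\setminus \widetilde{\mathrm{Cut}}_p(M)$ with $w_n\to v_2$; then $q_n:=\exp_p(w_n)$ satisfies $q_n\to q$ and $q_n\notin \mathrm{Cut}_p(M)$, so $q_n\in U$ for $n$ large. Since there is a unique minimal geodesic segment from $p$ to such a $q_n$, and since $\sigma$ must assign to $q_n$ the initial velocity of some minimal geodesic to $q_n$, the section is forced to take the value $\sigma(q_n)=w_n$.

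Two limits for $\sigma(q_n)$ now clash: by construction $\sigma(q_n)=w_n\to v_2$, while continuity of $\sigma$ at $q$ forces $\sigma(q_n)\to \sigma(q)=v_1$, contradicting $v_1\neq v_2$. The substantive obstacle is the geometric lemma invoked above, namely that the open star domain $K_p\setminus \widetilde{\mathrm{Cut}}_p(M)$ maps homeomorphically onto $M\setminus \mathrm{Cut}_p(M)$; granted this, the rest is a short uniqueness-plus-continuity argument and requires no further input.
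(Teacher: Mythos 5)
Your argument is correct and is essentially the paper's own proof: approach $q$ through points strictly before the cut point, where the minimizing geodesic is unique and its initial velocity is forced, and contradict continuity of the initial-velocity map $\nu$ from Lemma~\ref{LemmaGeod}. The only cosmetic difference is that the paper runs sequences along \emph{both} geodesics ($r_nv_1$ and $r_nv_2$ with $r_n\to 1^-$, needing no appeal to the fiberwise exponential homeomorphism), which also sidesteps your ``relabeling'' step --- if more than two minimal geodesics to $q$ exist, $\sigma(q)$ need not be $v_1$ or $v_2$, so you should instead choose the sequence toward whichever of $v_1,v_2$ differs from $\sigma(q)$.
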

\begin{proof}
Let $v_1,v_2 \in T_pM$, $v_1 \neq v_2$, be given by $\gamma_i(t) = \exp_p(t\cdot v_i)$ for all $t \in [0,1]$, $i \in \{1,2\}$. Choose a sequence $(r_n)_{n \in \mathbb{N}}$ in $[0,1)$ with $\lim_{n \to \infty} r_n=1$ and such that $\gamma_{i}(r_n) \in U$ for all $n \in \mathbb{N}$, $i \in \{1,2\}$. Assume there was such a continuous geodesic motion planner $s\colon \{p\} \times U\to GM$.  By definition of the cut locus, $\gamma_i(r_n)\notin \mathrm{Cut}_p(M)$, such that $s(p,\gamma_i(r_n))$ is the unique minimal geodesic segment from $p$ to $\gamma_i(r_n)$ for all $i$ and $n$. One checks from the properties of the exponential map that this segment is given by
\begin{equation}
\label{EqMPGeod}
(s(p,\gamma_i(r_n)))(t)= \exp_p(t \cdot r_n \cdot v_i)) \qquad \forall t \in [0,1],\ n \in \mathbb{N}, \ i \in \{1,2\}. 
\end{equation}
The map $\nu\colon GM \to TM$, $\nu(\gamma)=\gamma'(0)$, that already occurred in the proof of Lemma \ref{LemmaGeod}, is continuous, so $\nu\circ s\colon \{p\}\times U \to TM$ must be a continuous local section of $\mathrm{Exp}$. However, we derive from \eqref{EqMPGeod} that
$$(\nu \circ s)(p, \gamma_i(r_n)) = r_n \cdot v_i \qquad \forall n \in \mathbb{N},\ i \in \{1,2\}$$
But then 
$$\lim_{n \to \infty} (\nu \circ s)(p, \gamma_1(r_n))= v_1 \neq v_2 = \lim_{n \to \infty} (\nu \circ s)(p, \gamma_2(r_n)),$$
contradicting the continuity of $\nu \circ s$. Thus, there is no such continuous geodesic motion planner.
\end{proof}

In the previous proposition, we have derived a non-existence result for geodesic motion planners from a property of the tangent cut locus $\widetilde{\mathrm{Cut}}_p(M)$, namely that there are two different tangent vectors that are mapped onto the same point by the exponential map.


This evokes the question how more sophisticated observations on the behaviour of the tangent cut loci can yield lower bounds on geodesic complexity.  In \cite{MSGC}, M. Stegemeyer and the author have introduced the notion of an \emph{inconsistent stratification} of the cut locus, motivated by the idea of finding geometric obstructions to the existence of continuous geodesic motion planners in tangent cut loci. 

Similar geometric obstructions that do not make use of tangent spaces, but are technically more intricate, were introduced by D. Recio-Mitter in \cite[Section 3]{RecioMitter} for more general metric spaces. Since the definition of inconsistent stratifications, see \cite[Definition 4.6]{MSGC} is a very technical one, we shall only present a special case which generalizes the condition used in Proposition \ref{PropNonexist} to some extent. \medskip 

In the following we outline a notion investigated by J.-I. Itoh and T. Sakai in \cite{ItohSakai}. We assume that $(M,g)$ be a closed $n$-dimensional Riemannian manifold and that $p \in M$ is given in such a way that $\mathrm{Cut}_p(M)$ does not contain any conjugate point of $p$. For each $k \in \mathbb{N}$ let $C_k \subset \mathrm{Cut}_p(M)$ consist of those points, for which there are precisely $k+1$ distinct minimal geodesic segments from $p$ to $q$, so that $\mathrm{Cut}_p(M)= \bigcup_{k \in \mathbb{N}} C_k$.

\begin{definition}
 We call the decomposition $\mathrm{Cut}_p(M)=\bigcup_{k \in \mathbb{N}} C_k$ \emph{non-degenerate} if the following holds for all $k \in \mathbb{N}$ and $q \in C_k$: \medskip

If $\gamma_0,\gamma_1,\dots,\gamma_k \in GM$ are the $k+1$ distinct minimal geodesic segments from $p$ to $q$, then $\{\gamma'_0(0),\gamma_1'(0),\dots,\gamma_k'(0)\}\subset T_pM$ is in general position, i.e. the vectors 
$$\gamma_1'(0)-\gamma'_0(0),\gamma_2'(0)-\gamma'_0(0),\dots,\gamma_k'(0)-\gamma'_0(0)$$ 
are linearly independent in $T_pM$. 
\end{definition}
Apparently, if the decomposition is non-degenerate, then for dimensional reasons $C_k=\emptyset$ for all $k >n$. Using a result by Itoh and Sakai, see \cite[Proposition 2.4]{ItohSakai}, one shows that the decomposition $(C_1,\dots,C_n)$ of a cut locus indeed forms an inconsistent stratification and obtains the following result.

\begin{theorem}[{\cite[Corollary 4.12]{MSGC}}]
\label{TheoremGCLower}
Let $(M,g)$ be a closed Riemannian manifold and let $p \in M$, such that $\mathrm{Cut}_p(M)$ does not contain any conjugate point of $p$. If the decomposition $(C_1,\dots,C_n)$ from above is non-degenerate with $C_n \neq \emptyset$, then 
	$${\sf {GC}}(M,g) \geq n.$$
\end{theorem}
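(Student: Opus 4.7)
The plan is to argue by contradiction and localize the question to the point $q \in C_n$, producing $n+1$ mutually incompatible "branches" of the exponential map whose combinatorial arrangement forces the use of at least $n+1$ local sections. The overall framework is the \emph{inconsistent stratification} apparatus of \cite[Section 4]{MSGC}, for which Proposition \ref{PropNonexist} is the base case.

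First I would translate to the tangent picture via Corollary \ref{CorGCTM}: assume ${\sf GC}(M,g) \leq n-1$ and fix a decomposition $M \times M = A_0 \sqcup \dots \sqcup A_{n-1}$ into locally compact sets carrying continuous local sections $\sigma_k \colon A_k \to K$ of $\mathrm{Exp}|_K$. Let $q \in C_n$ (non-empty by hypothesis) and let $v_0, \dots, v_n \in T_pM$ be the initial vectors of the $n+1$ distinct minimal geodesic segments from $p$ to $q$. Since $\mathrm{Cut}_p(M)$ contains no conjugate point of $p$, the map $\exp_p$ restricts to a diffeomorphism on pairwise disjoint open neighborhoods $W_0, \dots, W_n$ of the $v_i$ onto a common open neighborhood $U$ of $q$; let $\phi_i \colon U \to W_i$ denote the $n+1$ local inverses.

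The key geometric input is the local structure of the stratification under the non-degeneracy hypothesis. Applying \cite[Proposition 2.4]{ItohSakai} together with the general-position condition on $v_0,\dots,v_n$, one obtains that $U$ decomposes as $U = (\mathrm{Cut}_p(M) \cap U) \sqcup R_0 \sqcup \dots \sqcup R_n$, where each $R_i$ is a non-empty open "Voronoi chamber" characterized by $|\phi_i(q')| < |\phi_j(q')|$ for all $j \neq i$, and $q$ lies in the closure of every $R_i$. On each $R_i$, uniqueness of minimal geodesics forces every continuous local section of $\mathrm{Exp}|_K$ defined on a subset of $\{p\} \times R_i$ to coincide with the $i$-th branch $q' \mapsto \phi_i(q')$. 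Next I would fix $k_0$ with $(p,q) \in A_{k_0}$ and note $\sigma_{k_0}(p,q) = v_{j_0}$ for some $j_0$. Continuity of $\sigma_{k_0}$ at $(p,q)$ together with the disjointness of the $W_i$ implies that $\sigma_{k_0}$ takes values in $W_{j_0}$ on a neighborhood of $(p,q)$ in $A_{k_0}$, hence $A_{k_0}$ is disjoint from $\{p\} \times R_i$ for $i \neq j_0$ near $(p,q)$. Iterating this observation to the remaining $n-1$ sets and the remaining $n$ chambers, pigeonhole would then yield the desired contradiction.

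The main obstacle is the subtle variant of the pigeonhole step at indices $k$ with $(p,q) \notin A_k$: I must rule out that such an $A_k$ accumulates on $(p,q)$ from two distinct chambers $R_i$ and $R_{i'}$. This is precisely where the locally compact structure of the $A_k$ and the formal notion of inconsistent stratification in \cite[Definition 4.6]{MSGC} become essential. Concretely, sequences $(p, q^i_n) \in A_k$ with $q^i_n \in R_i$ and $q^i_n \to q$ force $\sigma_k(p,q^i_n) = \phi_i(q^i_n) \to v_i$, and analogously $\sigma_k(p,q^{i'}_n) \to v_{i'}$; a compactness argument in $K$ and local closedness of $A_k$ then produce two distinct accumulation points of $\sigma_k$ over $(p,q)$, which — paired with the uniqueness of continuous selection established in the spirit of Proposition \ref{PropNonexist} — is incompatible with $v_i \neq v_{i'}$. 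This is the combinatorial-geometric heart of the theorem; all remaining ingredients are either preparatory translations or direct generalizations of the continuity argument already carried out for Proposition \ref{PropNonexist}.
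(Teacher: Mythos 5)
Your reduction via Corollary \ref{CorGCTM} and your analysis of the single domain containing $(p,q)$ are fine, but the pigeonhole step is a genuine gap, and it cannot be repaired in the form you propose. The claim you need --- that a locally compact set $A_k$ carrying a continuous geodesic motion planner and with $(p,q)\notin A_k$ can accumulate at $(p,q)$ from at most one chamber $R_i$ --- is false. The complement of the total cut locus is a single locally compact (indeed open) set carrying a continuous geodesic motion planner, namely the inverse of $\mathrm{Exp}|_{K\smallsetminus\widetilde{\mathrm{Cut}}(M)}$, and in your own setup it meets every chamber $R_0,\dots,R_n$ arbitrarily close to $(p,q)$; the union of any two chambers is another counterexample. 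Your proposed fix via local closedness does not work, because continuity of $\sigma_k$ constrains its values only at points of $A_k$: sequences $(p,q^i_m)\to(p,q)$ and $(p,q^{i'}_m)\to(p,q)$ inside $A_k$ whose $\sigma_k$-values converge to distinct vectors $v_i\neq v_{i'}$ contradict nothing when the limit point lies outside $A_k$ (compare the sign function on $\R\smallsetminus\{0\}$), and local compactness does not force $(p,q)\in A_k$, since $A_k$ may simply be open. The argument of Proposition \ref{PropNonexist} is not available here precisely because there the domain $\{p\}\times U$ contains $q$ together with both approaching sequences. Consequently one extra domain can serve all $n$ remaining chambers, and your counting only shows that two domains are needed near $(p,q)$, far short of ${\sf {GC}}(M,g)\geq n$.

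The failure is structural: the chambers consist of pairs with a unique minimal geodesic, and counting them can never yield a large lower bound. The argument behind \cite[Corollary 4.12]{MSGC} (which this survey does not reproduce, but cites) counts instead along the strata of the cut locus itself. Non-degeneracy together with \cite[Proposition 2.4]{ItohSakai} shows that $(C_1,\dots,C_n)$ is an inconsistent stratification: for $q\in C_k$ and any choice of minimal initial vector $v$ at $q$, there are points of the lower stratum $C_{k-1}$ (and eventually of $M\smallsetminus\mathrm{Cut}_p(M)$) arbitrarily close to $q$ all of whose minimal initial vectors stay uniformly away from $v$. One then descends $C_n \to C_{n-1}\to \dots \to C_1 \to M\smallsetminus \mathrm{Cut}_p(M)$, producing points in nested neighborhoods that must lie in pairwise distinct domains of the decomposition: here the continuity argument is applied at a point that \emph{does} belong to the domain in question (so nearby section values must be close to its section value, contradicting the inconsistency property), and local compactness is used to ensure that a previously used domain accumulating on the newly chosen points would already contain the relevant limit point. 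This descent through $n+1$ levels is what forces $n+1$ distinct domains, i.e. ${\sf {GC}}(M,g)\geq n$; your argument never descends below the top stratum $C_n$, which is why it cannot reach the stated bound.
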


\begin{remark}
Note that the lower bounds in this subsection are obtained from the study of the (tangent) cut locus \emph{of one particular point in $M$}. So far there is no generalization of the inconsistency or non-degeneracy condition to \emph{total} cut loci that might potentially provide stronger lower bounds for geodesic complexity. 	
\end{remark}

\section{Critical point theory and sectional category}

For almost a century, mathematicians have been studying connections between numerical homotopy invariants and critical points of functions on manifolds, starting with the work of L. Lusternik and L. Schnirelmann who introduced their eponymous category for, see \cite{LSbook}, and by M. Morse, see \cite{MorseCalcVarLarge}, whose theory has been studied and generalized ever since. 

Concerning topological robotics, a connection to critical point theory is given by so-called \emph{navigation functions}. The underlying idea, which first occurred in works of D. Koditschek and E. Rimon, is to carry out motion planning along paths whose traces are the traces of gradient flow trajectories of a differentiable function. In this section, we want to give a brief exposition of navigation functions and how they yield upper bounds on the complexity of motion planning in a manifold. We also want to take a slight detour from motion planning to discuss the notion of spherical complexities. The latter has been introduced by the author to study critical points of functions on loop and sphere spaces and applied to the closed geodesics
 problem in \cite{MescherExistGeod} and \cite{MescherSC}. We will see that spherical complexities indeed generalize TC in a certain sense and discuss the connection to navigation functions.
\medskip

Throughout this section, we will employ the following notation:
\begin{itemize} 
\item Given a topological space $X$, we will write $\pi_0(X)$ for the set of path-connected components of $X$.
\item Given a function $f\colon X \to \R$ and some $a \in \R$, we denote the closed sublevel set and the level set associated with $a$ by 
$$ f^a := \{x\in X \ | \ f(x)\leq a\}, \qquad f_a := \{x \in X \ | \ f(x)=a\}.$$
\item If $X$ is a smooth manifold and $f$ is differentiable, then we shall denote the set of critical points of $f$ by 
$$\mathrm{Crit}\, f = \{x \in X \ | \ df(x)=0\}.$$
\item We will further write $\cat_X(A)$ for the subspace category of $A \subset X$, see \cite[Definition 1.1]{CLOT}, and $\tc_X(B)$ for the subspace TC of $B \subset X \times X$. Here, with $i_B\colon B \hookrightarrow X\times X$ denoting the inclusion,  we let the latter be given as
$$\tc_X(B)= \secat(i_B^*\pi\colon i_B^*PX \to B),$$
i.e. the sectional category of the pullback of the path fibration via the inclusion, see also \cite[Section 4.3]{FarberBook}.
\end{itemize}
We further note that given a path-connected space $X$ and $A_1,\dots,A_r \subset X \times X$, an elementary computation shows that
\begin{equation}
\label{EqTCSubspace}
\tc_X\Big(\bigcup_{i=1}^r A_i \Big) \leq \sum_{i=1}^r \tc_X(A_i)+r-1	
\end{equation}

\subsection{Navigation functions and their generalizations}
A connection between critical point theory and topological complexity is given by the study of \emph{navigation functions}. This notion picks up an idea by D. Koditschek and E. Rimon who used critical points of Morse function on manifolds with boundary in robot navigation in \cite{KoditschekRimon} and \cite{RimonKoditStar}. They study the  motion planning problem for robots with respect to a fixed target point by considering a Morse function having a global minimum in the target point. 
The corresponding approach to topological complexity was suggested by M. Farber in \cite[Section 4.4]{FarberBook} and further elaborated upon and applied to lens spaces by A. Costa in \cite{Costa}. The main idea is to use a similar approach for flexible target points by studying Morse-Bott functions on $M \times M$ with special properties, assuming that the configuration space $M$ of the robot is a smooth manifold. The precise definition is as follows. 

\begin{definition}
	Let $M$ be a smooth manifold. A $C^2$-function $F:M \times M \to \R$ is called a \emph{navigation function} if it is a non-negative  Morse-Bott function with $F_0=F^{-1}(\{0\})=\Delta_M$.
\end{definition}

See \cite[Section 2.6]{NicolaescuMorse} for an overview of Morse-Bott theory. Since $\Delta_M$ consists of exactly those points in which $F$ attains its absolute minimum, $\Delta_M$ is a critical submanifold of every navigation function. 

\begin{lemma}
\label{LemmaStableDelta}
	Let $M$ be a smooth manifold, let $F:M \times M \to \R$ be a navigation function and let $g$ be a Riemannian metric on $M$. Let $W^s(\Delta_M)$ be the stable manifold of $\Delta_M$ with respect to the negative gradient of $F$ with respect to $g$. Then $W^s(\Delta_M)$ admits a continuous motion planner $s: W^s(\Delta_M)\to PM$.
\end{lemma}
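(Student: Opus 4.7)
The plan is to build the path $s(p,q)$ by following the negative gradient flow of $F$ from $(p,q)$ to the diagonal, projecting onto the two factors, and concatenating. Let $\widetilde{g} = g \oplus g$ denote the product Riemannian metric on $M \times M$ and let $X = -\nabla_{\widetilde{g}} F$ be the associated negative gradient vector field. Let $\phi: \mathcal{D} \to M \times M$ denote its local flow. By definition of $W^s(\Delta_M)$, for every $(p,q) \in W^s(\Delta_M)$ the trajectory $t \mapsto \phi_t(p,q)$ is defined on $[0,\infty)$ and converges as $t \to \infty$ to some point $(r,r) \in \Delta_M$. Writing $\mathrm{pr}_1, \mathrm{pr}_2: M \times M \to M$ for the projections, we obtain two curves $\alpha(t) := \mathrm{pr}_1 \phi_t(p,q)$ and $\beta(t) := \mathrm{pr}_2 \phi_t(p,q)$ in $M$ with $\alpha(0)=p$, $\beta(0)=q$, and $\lim_{t\to\infty}\alpha(t)=\lim_{t\to\infty}\beta(t)=r$.

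The first main step is to reparametrize these half-trajectories from $[0,\infty]$ to a compact interval in a way that depends continuously on $(p,q)$. The clean way to do this is to invoke Morse--Bott theory: since $F$ is a Morse--Bott function with $\Delta_M = F^{-1}(0)$ a non-degenerate minimum stratum, the negative gradient flow yields a continuous strong deformation retraction
\begin{equation*}
H: W^s(\Delta_M) \times [0,1] \to W^s(\Delta_M), \qquad H((p,q),0) = (p,q), \quad H((p,q),1) \in \Delta_M,
\end{equation*}
whose restriction to $\Delta_M$ is the identity (see e.g.\ \cite[Section 2.6]{NicolaescuMorse}). Concretely, one picks a continuous reparametrization such as $\tau(u) = -\log(1-u)$ and sets $H((p,q),u) = \phi_{\tau(u)}(p,q)$ for $u \in [0,1)$ and $H((p,q),1) = \lim_{t\to\infty}\phi_t(p,q)$; continuity at $u=1$ and jointly in $(p,q)$ is the content of Morse--Bott stable-manifold theory and uses the non-degeneracy of the Hessian of $F$ in normal directions to $\Delta_M$.

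Having $H$, define $s: W^s(\Delta_M) \to PM$ by concatenating the two projected half-trajectories, meeting at the common limit point on the diagonal:
\begin{equation*}
s(p,q)(u) = \begin{cases} \mathrm{pr}_1 \, H((p,q), 2u) & \text{if } u \in [0, 1/2], \\ \mathrm{pr}_2 \, H((p,q), 2-2u) & \text{if } u \in [1/2, 1]. \end{cases}
\end{equation*}
This is well defined at $u=1/2$ because $H((p,q),1) \in \Delta_M$ forces the two projections to coincide, and it satisfies $s(p,q)(0)=p$ and $s(p,q)(1)=q$ by the initial condition $H((p,q),0)=(p,q)$.

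The remaining step is to verify that $s$ is continuous as a map into $PM$ equipped with the compact-open topology. Both pieces are continuous as compositions of the continuous maps $H$, $\mathrm{pr}_i$ and affine reparametrizations on $[0,1]$; by the exponential law, the associated maps $W^s(\Delta_M) \to C^0([0,1/2],M)$ and $W^s(\Delta_M) \to C^0([1/2,1],M)$ are continuous, and they agree at $u=1/2$, so the gluing yields a continuous map into $PM$. The expected main obstacle is precisely the reparametrization and the continuity of $H$ at $u=1$ in the joint variables, i.e.\ showing that $\phi_t(p,q) \to $ its limit uniformly on small neighborhoods in $W^s(\Delta_M)$; this is exactly where the Morse--Bott hypothesis on $F$ (rather than mere non-negativity with $F^{-1}(0) = \Delta_M$) is crucial, because it guarantees exponential convergence in normal directions and hence the continuous structure of the stable manifold.
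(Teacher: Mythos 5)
Your argument is correct and follows essentially the same route as the paper's proof: flow $(p,q)$ down the negative gradient of $F$, project the trajectory onto the two factors of $M\times M$, reparametrize the half-trajectories onto a compact interval, and concatenate one projected path with the reverse of the other, meeting at the common limit point on $\Delta_M$. The only difference is cosmetic -- the paper reparametrizes each half-trajectory proportionally to its accumulated energy $\int_0^t\|\gamma_i'(s)\|_g^2\,ds\,/\,E(\gamma_i)$ and then checks continuity directly, whereas you use a fixed logarithmic time change and delegate the continuity at the endpoint to the Morse--Bott deformation retraction of $W^s(\Delta_M)$ onto $\Delta_M$; both devices address the same point, namely locally uniform convergence of the flow to the limit point, which is where the Morse--Bott hypothesis enters.
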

\begin{proof}
	Let $\phi: \R \times M^2 \to M^2$ be the negative gradient flow of $F$ with respect to $g$ and define 
$$\gamma^1_{x,y},\gamma^2_{x,y}:[0,+\infty) \to M, \qquad (\gamma^1_{x,y}(t),\gamma^2_{x,y}(t)):= \phi(t,x,y)$$ 
for all $t \in [0,+\infty)$,  $x,y\in M$.
Then $\lim_{t \to + \infty} \phi(t,x,y) \in \Delta_M$ for any $(x,y) \in W^s(\Delta_M)$, or equivalently
	$$\lim_{t \to + \infty} \gamma_{x,y}^1(t) = \lim_{t\to +\infty} \gamma_{x,y}^2(t). $$
We want to reparametrize these trajectories to obtain paths suitable for motion planning purposes. Fix $(x,y) \in W^s(\Delta_M)$. For $i \in \{1,2\}$ we put 
$$\gamma_i:= \gamma_{x,y}^i, \qquad p := \lim_{t\to +\infty} \gamma_i(t) \qquad \text{and} \qquad E(\gamma_i) := \int_0^{+\infty} \|\gamma_i'(s)\|_g^2\, ds.$$ 
Here, $\|\cdot \|_g$ denotes the norm induced by $g$ on the respective tangent space of $M$. A standard computation shows that the integral $E(\gamma_i)$ converges, so the following map is well-defined:
	$$\tau_i: [0,+\infty) \to [0,1), \qquad \tau_i(t) = \frac{\int_0^t \|\gamma_i'(s)\|_g^2\, ds}{E(\gamma_i)}.$$
Apparently, $\tau_i$ is continuous, surjective and strictly increasing, hence a homeomorphism, and one checks that the following  is indeed a continuous path for each $i \in \{1,2\}$:
$$ \alpha^i_{x,y} \colon[0,1] \to M, \qquad \alpha^i_{x,y}(t) = \begin{cases}
	\gamma_i(\tau_i^{-1}(t)) & \text{if } t \in [0,1), \\
	p & \text{if } t=1.
\end{cases}$$
These paths satisfy $\alpha^1_{x,y}(0)=x$, $\alpha^2_{x,y}(0)=y$ and $\alpha^1_{x,y}(1)=\alpha^2_{x,y}(1)$. One checks from the continuity of the flow  that the thus-obtained motion planner
	$$s:W^s(\Delta_M) \to PM, \qquad s(x,y) = \alpha_{x,y}^1 * \bar{\alpha}^2_{x,y}$$
	is indeed continuous. Here, we let $*$ denote the concatenation of paths and let $\bar{\alpha} \in PM$ be given by $\bar{\alpha}(t)=\alpha(1-t)$ for all $\alpha \in PM$ and $t \in [0,1]$.
\end{proof}

Using the previous lemma, we can establish an upper bound on $\tc(M)$ by subspace TCs of critical manifolds of navigation functions. 
\begin{theorem}[{\cite[Theorem 4.32]{FarberBook}}]
\label{TheoremTCNavigation}
Let $M$ be a closed smooth manifold and let $F:M \times M \to \R$ be a navigation function whose critical values are given by $0<c_1<c_2 < \dots <c_r$. For each $i \in \{1,2,\dots,r\}$ we put $\mathcal{C}_i( F) := \pi_0(\mathrm{Crit}\, F \cap F_{c_i})$. Then 
$$\tc(M) \leq \sum_{i=1}^r \tc_M(\mathrm{Crit}\, F \cap F_{c_i})+r = \sum_{i=1}^r \max \{\tc_M(C) \ | \ C \in \mathcal{C}_i(F)\}+r.$$
\end{theorem}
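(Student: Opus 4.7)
The plan is to build an open cover of $M \times M$ of cardinality $1 + \sum_{i=1}^r(\tc_M(K_i)+1)$ by pairing the negative gradient flow of $F$ with the subspace motion planners on the critical submanifolds $K_i := \mathrm{Crit}\, F \cap F_{c_i}$ (setting $K_0 := \Delta_M$). Fix a Riemannian metric on $M \times M$ and let $\phi$ denote the associated negative gradient flow of $F$. Since $\Delta_M$ is a Morse--Bott minimum, $W^s(\Delta_M)$ is open in $M \times M$, and Lemma \ref{LemmaStableDelta} already supplies a continuous motion planner on it; this is the first set of the cover.

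For each $i \in \{1,\dots,r\}$ I would choose pairwise disjoint open Morse--Bott tubular neighborhoods $T_i \supset K_i$ in $M \times M \setminus \Delta_M$ equipped with continuous retractions $r_i \colon T_i \to K_i$ supplied by the Morse--Bott normal form. Writing $N_i := \tc_M(K_i)$, the definition of subspace TC yields an open cover $\{A_{i,j}\}_{j=0}^{N_i}$ of $K_i$ together with continuous local sections $\sigma_{i,j}\colon A_{i,j} \to PM$; set $T_{i,j} := r_i^{-1}(A_{i,j})$. Every forward orbit of $\phi$ eventually enters either $W^s(\Delta_M)$ or some $T_{i,j}$, and a standard compactness argument promotes this pointwise statement to the existence of a uniform $T > 0$ with $\phi_T(M \times M) \subset W^s(\Delta_M) \cup \bigcup_{i,j} T_{i,j}$. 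Since $W^s(\Delta_M)$ is $\phi$-invariant, this produces the open cover
\[
M \times M \;=\; W^s(\Delta_M) \,\cup\, \bigcup_{i=1}^{r}\bigcup_{j=0}^{N_i} \phi_{-T}(T_{i,j})
\]
of size $1 + \sum_{i=1}^r(N_i+1)$. On $\phi_{-T}(T_{i,j})$ a continuous motion planner is assembled, for each $(x,y)$, from three continuously parametrized pieces in $M$: the two components of the flow segment $t \mapsto \phi_t(x,y)|_{[0,T]}$ (the first from $x$ to $x'$, the second reversed from $y'$ to $y$, where $(x',y') := \phi_T(x,y)$), the two components of a retraction path in $T_i$ from $(x',y')$ to $(p_1,p_2) := r_i(x',y') \in A_{i,j}$, and the path $\sigma_{i,j}(p_1,p_2)$; an appropriate reparametrization of the flow pieces onto subintervals of $[0,1]$, in the spirit of the proof of Lemma \ref{LemmaStableDelta}, makes the concatenation continuous in $(x,y)$. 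Counting the cover gives the desired bound $\tc(M) \leq r + \sum_{i=1}^r \tc_M(K_i)$, and the equality with $\sum_i \max\{\tc_M(C) : C \in \mathcal{C}_i(F)\}$ follows from the observation that the finitely many path components of each closed submanifold $K_i$ are pairwise disjoint open subsets, so component-wise covers amalgamate into a cover of $K_i$ of cardinality $\max_{C \in \mathcal{C}_i(F)}(\tc_M(C)+1)$.

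The principal obstacle I anticipate is the continuity of the motion planner on each $\phi_{-T}(T_{i,j})$, in particular at the transition from the flow phase to the retraction phase. The Morse--Bott normal form determines $F$ only up to fiberwise diffeomorphism on the normal bundle to $K_i$, so extracting a retraction $r_i$ together with associated paths in $M$ that depend continuously on the basepoint requires an explicit geometric choice, for instance a linear connection on the normal bundle. A secondary technical point is the uniform hitting-time estimate: it follows from compactness of $M \times M$ combined with the openness of $W^s(\Delta_M) \cup \bigcup_i T_i$, but its organization needs some care since the pointwise hitting time $(x,y) \mapsto \inf\{t \geq 0 : \phi_t(x,y) \in W^s(\Delta_M) \cup \bigcup_i T_i\}$ is only lower semicontinuous in general.
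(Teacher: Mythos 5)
Your plan breaks down at the uniform hitting-time claim, and this is not a minor technicality but a genuinely false step. From the pointwise statement ``for every $z$ there exists $t$ with $\phi_t(z)\in U$'', where $U:=W^s(\Delta_M)\cup\bigcup_{i,j}T_{i,j}$, compactness only yields finitely many times $t_1,\dots,t_m$ with $M\times M=\bigcup_k\phi_{-t_k}(U)$, not a single $T$ with $\phi_T(M\times M)\subset U$ -- and in general no such $T$ exists, because the sets $T_i$ are not forward-invariant. Concretely, whenever there is a flow line from a critical component at level $c_i$ down to one at a lower positive level $c_j$, take $z\in W^s(K_j)$ very close to $K_i$: its orbit lingers inside $T_i$ for an arbitrarily long time (growing as $z$ approaches $W^s(K_i)$), then spends a transit interval of roughly fixed length outside $T_i\cup T_j$ before settling into $T_j$. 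For any prescribed $T$ you can choose $z$ so that $\phi_T(z)$ lies in that transit window, hence outside $U$ (the invariance of $W^s(\Delta_M)$ does not help, since $z$ lies in the stable manifold of a nonminimal component). So the cover you propose need not cover $M\times M$, and using several times $t_k$ instead would multiply the count and destroy the bound. The rest of your argument (the three-piece planner on $\phi_{-T}(T_{i,j})$, the tubular-neighborhood retraction paths, and the amalgamation of component-wise covers of $K_i$ into one of cardinality $\max_C(\tc_M(C)+1)$) is sound.

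The paper avoids this problem by never introducing a uniform time or tubular neighborhoods: it uses the Morse--Bott decomposition $M\times M=\bigcup_{C\in\pi_0(\mathrm{Crit}\,F)}W^s(C)$ into stable manifolds (locally compact, not open, which is permitted by the subspace notion $\tc_M$), applies the subadditivity \eqref{EqTCSubspace} together with $\tc_M(W^s(\Delta_M))=0$ from Lemma \ref{LemmaStableDelta}, and gets $\tc_M(W^s(C))=\tc_M(C)$ from the continuous retraction $q_C\colon W^s(C)\to C$ sending a point to its limit under the flow, via \cite[Corollary 4.26]{FarberBook}; the same-level components are then merged using the closure relation $\overline{W^s(C)}\subset W^s(C)\cup\bigcup_{j>i}\bigcup_{D}W^s(D)$ from Morse--Bott theory. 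If you want to salvage your construction, replace the sets $\phi_{-T}(T_{i,j})$ by $q_{C}^{-1}(A_{i,j})\subset W^s(C)$: on these domains your flow-plus-section planner works verbatim (reparametrizing the full forward trajectory as in Lemma \ref{LemmaStableDelta}), and both of the obstacles you flagged -- the normal-form/retraction issue and the hitting-time estimate -- disappear.
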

\begin{proof}
By standard results from Morse-Bott theory, since $M$ is closed, hence complete,  $M \times M$  decomposes as
	$$ M \times M = \bigcup_{C \in \pi_0(\mathrm{Crit}\, F)} W^s(C) = W^s(\Delta_M) \cup \bigcup_{i=1}^r \bigcup_{C \in \mathcal{C}_i(F)} W^s(C).$$
	We derive from \eqref{EqTCSubspace} that 
	\begin{align*}
	\tc(M) &\leq 
	\sum_{i=1}^r \tc_M \Big(\bigcup_{C \in \mathcal{C}_i(F)} W^s(C) \Big)+r,
	\end{align*}
	where we used that $\tc_M(W^s(\Delta_M)=0$ by Lemma \ref{LemmaStableDelta}.
	By results from Morse-Bott theory, see e.g. \cite[Lemma 3.3]{AustinBraam}, it holds for all $i \in \{1,2,\dots,r\}$ and each $C \in \mathcal{C}_i(F)$ that $$\overline{W^s(C)} \subset W^s(C) \cup \bigcup_{j>i} \bigcup_{D \in \mathcal{C}_j(F)} W^s(D),$$ which particularly implies that
	$$\overline{W^s(C)}\cap W^s(C') = \emptyset \qquad \forall C,C' \in \mathcal{C}_i(F), \ i \in \{1,2,\dots,r\}.$$
It is a well-known trick in the study of topological complexity that if the closure of a domain of one continuous motion planner does not intersect the domain of another and vice versa, then the domains may be merged to form a domain of \emph{one} continuous motion planner. (Note that the definition of a continuous motion planner does not require its domain to be connected.)	Here, this showns that domains of continuous motion planners on different components of $\mathcal{C}_i(f)$ can be merged in the way we just described. Reducing the number of domains of continuous motion planners shows that 
	$$\tc_M \Big(\bigcup_{C \in \mathcal{C}_i(F)} W^s(C) \Big) = \max \{\tc_M(W^s(C)) \ | \ C \in \mathcal{C}_i(F)\}.$$
for each $i \in \{1,2,\dots,r\}$. By the stable manifold theorem for Morse-Bott functions $W^s(C)$ is homeomorphic to the normal bundle of $C$ in $M$. Moreover, one obtains a continuous retraction $q_C:W^s(C) \to C$ by sending each point to its limit point under the negative gradient flow, which by \cite[Corollary 4.26]{FarberBook} yields that $\tc_M(W^s(C))=\tc_M(C)$ for any $C \in \pi_0(\mathrm{Crit}\, F)$. Combining the previous observations shows the claim.
\end{proof}

Recently, the use of navigation functions has been picked up in the context of \emph{homotopic distance}. This notion was introduced by E. Mac\'{i}as-Virg\'{o}s and D. Mosquera Lois in \cite{MVMLhomdist}. Given two topological spaces $X$ and $Y$ and two continuous maps $f,g\colon X \to Y$, the \emph{homotopic distance of $f$ and $g$}, denoted by $D(f,g)$, is the minimal value of $n \in \mathbb{N}$, for which there exists an open cover $U_0,U_1,\dots,U_n$ of $X$, such that $f|_{U_j}\colon U_j \to Y$ is homotopic to $g|_{U_j}$ for each $j \in \{0,1,\dots,n\}$. It particularly holds that
$$\tc(X)= D(\mathrm{pr}_1,\mathrm{pr}_2), $$
where $\mathrm{pr}_i\colon X\times X \to X$ denotes the projection onto the $i$-th factor for $i \in \{1,2\}$. An analogue of Theorem \ref{TheoremTCNavigation} for homotopic distance was established in \cite[Theorem 4.5]{MVMLgeneralized}.

\begin{remark}
Morse-theoretic methods have been applied in topological robotics in other contexts as well, for example the in the study of configuration spaces of planar linkages by M. Farber and D. Sch\"utz, see \cite[Chapter 1]{FarberBook} or \cite[Section 3.1]{NicolaescuMorse} for an overview.
\end{remark}

\subsection{Critical points and numerical invariants} A reader familiar with critical point theory might see similarities between Theorem \ref{TheoremTCNavigation} and its proof and the Lusternik-Schnirelmann theorem, which we want to briefly recall. While Lusternik and Schni\-rel\-mann only considered finite-dimensional manifolds in their original works, the following is an extension of their work that is due to R. Palais, see \cite{PalaisLuster}.

\begin{theorem}[Lusternik-Schnirelmann]
\label{TheoremLS}
Let $M$ be a Hilbert manifold and let $f \in C^1(M)$ be bounded from below and satisfy the Palais-Smale condition with respect to a complete Riemannian metric on $M$. Then, for each $a \in \R$, 
$$\cat_M(f^a) \leq \sum_{c \in (-\infty,a]} (\cat_M(\mathrm{Crit}\, f \cap f_c)+1)-1.$$  
\end{theorem}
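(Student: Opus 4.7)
The plan is to exploit the negative pseudo-gradient flow of $f$, together with the Palais--Smale condition, to relate sublevel sets across consecutive critical values, and then to combine these comparisons via the subadditivity of subspace category.

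First I would note that since $f$ is bounded from below and satisfies Palais--Smale with respect to a complete metric, the set of critical values in $(-\infty, a]$ is finite; denote them by $c_1 < c_2 < \cdots < c_k \leq a$ and set $K_i := \mathrm{Crit}\,f \cap f_{c_i}$. Each $K_i$ is compact, again by Palais--Smale. Since $M$ is a Hilbert manifold and hence an ANR, I would then use the upper semicontinuity of $\cat_M$ on compacta to produce open neighborhoods $U_i \supset K_i$ satisfying $\cat_M(U_i) = \cat_M(K_i)$.

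Next, using completeness of the metric I would construct a locally Lipschitz pseudo-gradient vector field $X$ for $f$ and invoke the classical deformation lemma: for each $i$ and sufficiently small $\epsilon_i > 0$, the set $f^{c_i + \epsilon_i} \setminus U_i$ deforms, through $f^{c_i + \epsilon_i}$, into $f^{c_i - \epsilon_i}$. Palais--Smale ensures a uniform lower bound on $\|X\|$ on the complement of $U_i$ inside a narrow strip around $c_i$, so that the flow descends past $c_i$ in finite time. For regular values $c$, the same argument with $U_i = \emptyset$ shows $f^{c+\eta}$ deforms onto $f^{c-\eta}$; in particular, $f^{c_i - \epsilon_i}$ deforms onto $f^{c_{i-1} + \epsilon_{i-1}}$ since no critical values lie strictly between.

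Combining these ingredients with the subadditivity $\cat_M(A \cup B) \leq \cat_M(A) + \cat_M(B) + 1$ and the fact that $\cat_M$ does not increase along deformations inside $M$, I would obtain the recursion
$$\cat_M(f^{c_i + \epsilon_i}) \leq \cat_M(U_i) + \cat_M(f^{c_i - \epsilon_i}) + 1 = \cat_M(K_i) + \cat_M(f^{c_{i-1} + \epsilon_{i-1}}) + 1.$$
Starting from $\cat_M(f^b) = -1$ for $b < c_1$ (any point in $f^b$ could otherwise be flowed indefinitely downwards by PS, contradicting boundedness below) and iterating, followed by a final deformation of $f^a$ onto $f^{c_k + \epsilon_k}$, yields exactly $\cat_M(f^a) \leq \sum_{i=1}^k (\cat_M(K_i)+1) - 1$.

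The main obstacle is a careful implementation of the deformation lemma in the $C^1$ Hilbert setting: since $f$ is merely $C^1$, the gradient $\nabla^g f$ need not be locally Lipschitz, so one must pass to a Palais pseudo-gradient, and one must arrange that the integral curves stay outside the chosen neighborhood $U_i$ of $K_i$ throughout the deformation. This is precisely where Palais--Smale enters, ruling out almost-critical sequences on the complement of $U_i$ at level $c_i$ and thereby providing the uniform descent rate required to leave the strip around $c_i$ in finite time.
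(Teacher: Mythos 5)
The survey does not prove this theorem itself: it is quoted as Palais's extension of Lusternik--Schnirelmann theory, with only the remark (citing Struwe) that the $C^1$ hypothesis is admissible because one can replace the gradient by a locally Lipschitz pseudo-gradient. Your sketch reproduces exactly that classical route: compactness of each critical set $K_i$ from the Palais--Smale condition, neighborhoods $U_i$ with $\cat_M(U_i)=\cat_M(K_i)$ via the ANR property of a Hilbert manifold, the quantitative deformation lemma for the pseudo-gradient flow across each critical level and across noncritical bands (with completeness ensuring the flow exists long enough), and then subadditivity plus deformation-monotonicity of the subspace category to assemble the recursion and the empty base level. In structure and in the handling of the $C^1$ issue this matches the proof the paper refers to, and the bookkeeping with the reduced conventions is correct.

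One assertion is wrong as stated: Palais--Smale together with boundedness from below does \emph{not} force the set of critical values in $(-\infty,a]$ to be finite. It only forces $\mathrm{Crit}\,f\cap f^a$ to be compact, so the critical values form a compact set, which may well be infinite (critical values can accumulate; e.g.\ a function on $\R$ with critical points at $0$ and $1/n$ taking pairwise distinct values, with $|f'|$ bounded away from zero outside a compact set, is bounded below and satisfies Palais--Smale). The repair is immediate and should be stated: if there are infinitely many critical values in $(-\infty,a]$, each such level contributes at least $1$ to the right-hand side, which is then infinite and the inequality is vacuous; hence one may assume finitely many critical values $c_1<\dots<c_k\le a$ and run your induction as written. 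With that adjustment your argument is complete in outline.
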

\begin{remark}
\begin{enumerate}[(1)]
	\item	In many statements of the Lusternik-Schnirelmann theorem, for example in \cite[Theorem 1.15]{CLOT} the function is required to be of class $C^2$ or of class $C^{1,1}$, i.e. to have a locally Lipschitz-continuous derivative. However, this requirement is dropped in Theorem \ref{TheoremLS} since every differentiable function on a smooth Banach manifold admits a locally Lipschitz-continuous \emph{pseudo}-gradient whose flow can be made use of in its proof, see \cite[Lemma II.3.9]{Struwe}.
	\item 
There are various generalizations of the Palais-Smale condition, under which analogues of Theorem \ref{TheoremLS} hold, e.g. those by M. Clapp and D. Puppe from \cite{ClappPuppe}. A more topological approach, which considers fixed points of self-maps instead of critical points, which are fixed points of time-1 map of a gradient flow, was carried out by Y. Rudyak and F. Schlenk in \cite{RudyakSchlenk}.	\end{enumerate}
\end{remark}
One derives from Theorem \ref{TheoremLS} that 
$$\# \{x \in \mathrm{Crit}\, f\ | \ f(x)\leq a\} \geq \cat_M(f^{ a})+1$$
for each $a \in \R$. For this reason, the Lusternik-Schnirelmann theorem is often used to estimate the minimum number of critical points of a function by topological quantities of the manifold. However, there are similar questions on functions whose critical points are not isolated, but occur in families to which the use of Lusternik-Schnirelmann category is limited. 

Consider a differentiable function $f:M \to \R$ on a Hilbert manifold which is invariant under the action of a Lie group $G$ on $M$. Then each critical point of $f$ is contained in a whole $G$-orbit of critical points of $M$. In particular, $f$ will have infinitely many critical points if $G$ is infinite. Evidently, in this case one is interested not in the number of critical points, but in the number of critical \emph{$G$-orbits} of $f$. \medskip

A particularly interesting example of such a function occurs in Riemannian geometry in connection with the \emph{closed geodesic problem}, which has been a frequent research topic in Riemannian geometry throughout the last half-century. In this problem, one asks if every closed Riemannian manifold admits infinitely many geometrically distinct non-constant closed geodesics, where we call two closed geodesics $\gamma_1,\gamma_2\colon S^1 \to M$ geometrically distinct if $\gamma_1(S^1) \neq \gamma_2(S^1)$. This question has been answered affirmatively for a large class of manifolds by D. Gromoll and W. Meyer in \cite{GromollMeyer} , namely for those manifolds, for which the sequence of Betti numbers of the free loop space is unbounded. Among those manifolds for whom the closed geodesic problem remains open are spheres and complex projective spaces. 

 Given a Riemannian manifold $(M,g)$, we recall that a closed geodesic with respect to $g$ is given by a geodesic segment $\alpha:[0,1]\to M$ which satisfies $\alpha(0)=\alpha(1)$ and $\alpha'(0)=\alpha'(1)$, which is more commonly expressed as a differentiable map $\gamma\colon S^1 \to M$. 
 For example, on $S^2$ with a round metric, the closed geodesics are precisely the great circles in $S^2$ with a suitable parametrization at constant speed. Most research on the closed geodesic problem roughly follows one of the following two approaches: either one studies all geodesics of the manifold by studying the geodesic flow on its tangent bundle and searches for closed orbits or one studies the free loop space of the manifold and searches for loops satisfying the closed geodesic condition. We refer the reader to \cite{Oancea} for a recent survey on the closed geodesics problem from the second point of view. The key tools in this approach are \emph{energy functionals}.

Instead of the free loop space $LM$ one studies the infinite-dimensional manifold $\Lambda M=H^1(S^1,M)$. It is locally modelled on the Sobolev space $H^1(S^1,\R^{\dim M})$ of weakly differentiable loops with square-integrable derivatives and has the homotopy type of $LM$, see \cite[Theorem 1.5.1]{MooreGlobal} for a proof. Since it is modelled on a Hilbert space, $\Lambda M$ is a Hilbert manifold. An advantage of Hilbert manifolds over general Banach manifolds is that several approaches and results from finite-dimensional Morse theory carry over to the infinite-dimensional setting, see e.g. \cite{PalaisHilbert} or \cite{AMHilbert}. The \emph{energy functional of $g$} is given by 
$$E_g\colon \Lambda M \to \R, \qquad E_g(\gamma) = \int_0^1 \|\gamma'(t)\|^2_{\gamma(t)}\, dt,$$
where for each $p \in M$ we denote by $\|\cdot\|_p$ the norm on $T_pM$ induced by $g$. The energy functional is indeed continuously differentiable and the critical points of $E_g$ are precisely the closed geodesics of $M$. It evidently holds that $E_g(\gamma) \geq 0$ for all $\gamma \in \Lambda M$ and that $E_g^{-1}(\{0\})= c(M)$, where $c(M)$ denotes the set of constant loops in $M$. \medskip

One shows that $E_g$ is invariant under the $O(2)$-action by reparametrizations which is induced by the action of $O(2)$ on $S^1$ via matrix multiplication. Consequently, the critical points of $E_g$, i.e. the closed geodesics of $g$, occur in $O(2)$-orbits. Moreover, since every constant loop minimizes $E_g$, each constant loop is a critical point of $E_g$ and we obtain $c(M)$ as a connected critical submanifold. Our aim is to find a lower bound for the number of $O(2)$-orbits of non-constant critical points of $E_g$. However, we can not gain information about this number from Theorem \ref{TheoremLS} as the following problems occur on the right-hand side of the inequality:
\begin{itemize}
	\item The value of $\cat_{\Lambda M}(c(M))$ is unclear. 
	\item Given $\gamma \in \Lambda M$, we know for its $O(2)$-orbit only that $\cat_{\Lambda M}(O(2)\cdot \gamma) \in \{0,1\}$, but there is no explicit way to decide which of the two values it takes. 
\end{itemize}
As mentioned above, similar problems occur more generally in Lusternik-Schnirelmann-type approaches to functions that are invariant under a Lie group action. There have been several approaches to handle such situations and to derive estimates on numbers of critical orbits, see \cite{Marz}, \cite{ClappPuppeSymm} and \cite{Bartsch}.

An idea suggested by the author in \cite{MescherSC} in the context of energy functional is now to replace the subspace categories $\cat_{\Lambda M}(A)$ by  better-suited numbers ${\sf {SC}}_{M}(A)$, for whom an inequality analogous to the one from Theorem \ref{TheoremLS} holds.

\subsection{Spherical complexities and critical points} We will formulate the definition and a main result in the more general context of sphere spaces, as no immediate technical benefit is obtained by restricting to loop spaces. Given $n \in \mathbb{N}_0$, we let $B^{n+1}\subset \R^{n+1}$ denote the closed $(n+1)$-dimensional unit ball and $S^n = \partial B^{n+1}$ denote the unit sphere in $\R^{n+1}$.
Given a path-connected topological space $X$, we put $B_{n+1}X := C^0(B^{n+1},X)$ and $S_nX:= \{f \in C^0(S^n,X) \ | \ f \text{ is nullhomotopic}\}$. Then 
$$r_n\colon B_{n+1} X \to S_nX, \qquad r_n(f) := f|_{S^n},$$
is a fibration.

\begin{definition}[{\cite{MescherSC}}]
\label{DefSC}
	Let $n \in \mathbb{N}$ and let $X$ be a path-connected topological space. 
Given $A \subset S_nX$ we let $i_A\colon A \hookrightarrow S_nX$ denote the inclusion and call
\begin{align*}
	{\sf {SC}}_{n,X}(A) := \secat(i_A^*r_n: i_A^*B_{n+1}X \to A)
\end{align*}
the \emph{spherical complexity of $A$ relative to $S_n X$}.
\end{definition}

\begin{remark}
\label{RemarkSC}
\begin{enumerate}[(1)]
	\item The case of $n=0$ in Definition \ref{DefSC} is already familiar to us. In this case, $B_1X \approx PX$ and $C^0(S^0,X) \approx X \times X$ and one checks that $r_0$ corresponds to the path fibration $p:PX \to X \times X$ under these identifications. Thus, for each $A \subset X \times X$, we obtain
	$${\sf {SC}}_{0,X}(A) = \tc_X(A).$$
	\item In the case of $n=1$, it apparently holds that $S_1X = L_0X$, the connected component of $LX$ consisting of all contractible loops in $X$ and the fibration $r_1:C^0(B^2,X) \to L_0X$ is  given by restricting a map from a $2$-disk to $M$ to its boundary loop. 
	\end{enumerate}
\end{remark}

The following result is an analogue of Theorem \ref{TheoremLS} for spherical complexities. Note that in the original formulation in \cite{MescherSC} the ``unreduced'' version of sectional category is used, so that the definition of the numbers ${\sf {SC}}_{n,X}(A)$ differs from the one in Definition \ref{DefSC} by one.

\begin{theorem}[{\cite[Corollary 2.14]{MescherSC}}]
\label{TheoremSC}
	Let $n \in \mathbb{N}_0$, let $X$ be a path-connected topological space and let $\mathcal{M} \subset S_nX$ be a smooth Hilbert manifold. Let $F:\mathcal{M} \to \R$ be continuously differentiable and bounded from below and assume that $F$ satisfies the Palais-Smale condition with respect to a complete Riemannian metric on $\mathcal{M}$.  Then 
$${\sf {SC}}_{n,X}(F^a) \leq \sum_{c \in (-\infty,a]} ({\sf {SC}}_{n,X}(\mathrm{Crit}\, F \cap F_c)+1)-1 \qquad \forall a \in \R. $$
\end{theorem}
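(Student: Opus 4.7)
The plan is to imitate the proof of the classical Lusternik-Schnirelmann theorem (Theorem \ref{TheoremLS}), with the subspace category $\cat_{\mathcal{M}}(\cdot)$ replaced throughout by the spherical complexity ${\sf {SC}}_{n,X}(\cdot)$. Three structural properties of ${\sf {SC}}_{n,X}$ drive the argument, all inherited from its definition as the sectional category of the pullback fibration $i_A^*r_n$: monotonicity under inclusion; subadditivity, ${\sf {SC}}_{n,X}(A\cup B) \leq {\sf {SC}}_{n,X}(A) + {\sf {SC}}_{n,X}(B) + 1$, which is obtained by concatenating open covers with local sections; and a continuity property asserting that every compact $K \subset \mathcal{M}$ admits an open neighborhood $U$ in $S_nX$ with ${\sf {SC}}_{n,X}(U) \leq {\sf {SC}}_{n,X}(K)$, established by extending finitely many local sections of the Hurewicz fibration $r_n$ from $K$ to an ambient open set.

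Next I would invoke the standard deformation machinery of infinite-dimensional Morse theory for $C^1$-functions on Hilbert manifolds satisfying the Palais-Smale condition, cf.\ the discussion surrounding Theorem \ref{TheoremLS} or \cite[Chapter II]{Struwe}. Using a locally Lipschitz pseudo-gradient one obtains, for each critical value $c \leq a$, an open neighborhood $U_c$ of the critical set $K_c := \mathrm{Crit}\, F \cap F_c$, a number $\varepsilon > 0$, and a continuous deformation of $\mathcal{M}$ homotopic to $\mathrm{id}_{\mathcal{M}}$ carrying $F^{c+\varepsilon}\smallsetminus U_c$ into $F^{c-\varepsilon}$. The Palais-Smale condition also ensures that each $K_c$ is compact, making the continuity property above applicable, and that the critical values in $(-\infty,a]$ form a locally finite subset.

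The bound then follows by induction on the critical values $c_1 < c_2 < \dots < c_N$ in $(-\infty, a]$. Choosing $c_0 < \inf F$ so that $F^{c_0} = \emptyset$ and ${\sf {SC}}_{n,X}(F^{c_0}) = -1$, the inductive step at $c_i$ reads
\begin{align*}
	{\sf {SC}}_{n,X}(F^{c_i+\varepsilon})
	&\leq {\sf {SC}}_{n,X}(F^{c_i-\varepsilon} \cup U_{c_i}) \\
	&\leq {\sf {SC}}_{n,X}(F^{c_i-\varepsilon}) + {\sf {SC}}_{n,X}(U_{c_i}) + 1 \\
	&\leq {\sf {SC}}_{n,X}(F^{c_i-\varepsilon}) + {\sf {SC}}_{n,X}(K_{c_i}) + 1,
\end{align*}
combining the deformation lemma (which pushes $F^{c_i+\varepsilon}$ into $F^{c_i-\varepsilon}\cup U_{c_i}$ up to a homotopy under which sectional category is invariant), subadditivity, and the continuity property. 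Since ${\sf {SC}}_{n,X}(F^t)$ is constant on intervals between consecutive critical values (the negative pseudo-gradient flow retracts the higher sublevel set onto the lower one), the total increment from $F^{c_0}$ to $F^a$ is at most $\sum_{i=1}^N ({\sf {SC}}_{n,X}(K_{c_i})+1)$, yielding the stated inequality once the initial $-1$ is accounted for.

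The hard part is the continuity property invoked in the first paragraph. Unlike the classical category $\cat_{\mathcal{M}}$, where a neighborhood of $K_c$ inside $\mathcal{M}$ is sufficient, ${\sf {SC}}_{n,X}$ measures sections of $r_n$ over the substantially larger sphere space $S_n X$. Extending local sections of $r_n$ from the compactum $K_c$ to an ambient open subset of $S_n X$ therefore requires exploiting both the Hurewicz fibration property of $r_n$ and the existence of sufficiently well-behaved tubular-type neighborhoods of compact subsets in the Hilbert manifold $\mathcal{M}$, which should then be thickened to open sets in $S_n X$ via homotopy lifting. This is precisely where the Hilbert manifold hypothesis on $\mathcal{M}$, rather than just some abstract ambient metric space, is used in an essential way.
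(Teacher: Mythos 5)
The survey does not reproduce a proof of this theorem (it is quoted from \cite{MescherSC}), but it frames it explicitly as the ${\sf {SC}}$-analogue of Theorem \ref{TheoremLS}, and your skeleton --- pseudo-gradient deformation lemmas for $C^1$ Palais--Smale functions, subadditivity, a continuity property, and induction over critical values --- is indeed the intended Lusternik--Schnirelmann-type route. The substantive problem is the step you yourself flag as ``the hard part'': the continuity property is not established, and the mechanism you propose does not work. The critical sets $\mathrm{Crit}\, F \cap F_c$ are merely compact subsets of $\mathcal{M}$; compact subsets of a Hilbert manifold admit tubular neighborhoods only when they are submanifolds, and arbitrary compacta (which is all Palais--Smale gives you) need not be neighborhood retracts at all, so ``tubular-type neighborhoods plus homotopy lifting'' cannot produce a neighborhood $U\supset K$ with ${\sf {SC}}_{n,X}(U)\leq {\sf {SC}}_{n,X}(K)$. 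Moreover, a section of $r_n$ over $K$ is a map into $B_{n+1}X=C^0(B^{n+1},X)$, and for a bare path-connected space $X$ this target has no extension property whatsoever, so there is nothing to extend with. The argument that actually closes this step uses niceness of $X$ rather than of $\mathcal{M}$: when $X$ is a metrizable ANR (as in every application here, $X=M$ a closed manifold), the mapping spaces $B_{n+1}X$ and $S_nX$ are ANRs, one extends the section from the compact set to a map on a neighborhood in $\mathcal{M}$, observes that $r_n$ composed with this extension is, after shrinking, homotopic to the inclusion (near maps into an ANR are homotopic), and then straightens via the homotopy lifting property of $r_n$ to get a genuine section. Without an input of this kind your proof has a genuine hole exactly where you said the difficulty lies.

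Two smaller points. First, the Palais--Smale condition does \emph{not} make the critical values in $(-\infty,a]$ locally finite (critical values may accumulate); the correct reduction is that if there are infinitely many critical values $\leq a$, then each nonempty critical level contributes at least $1$ to the right-hand side, which is therefore infinite and the inequality is vacuous, so one may assume finitely many. Second, the subadditivity ${\sf {SC}}_{n,X}(A\cup B)\leq {\sf {SC}}_{n,X}(A)+{\sf {SC}}_{n,X}(B)+1$ is not automatic for arbitrary subsets, because a relatively open subset of $F^{c-\varepsilon}$ with a section need not be relatively open in $F^{c-\varepsilon}\cup U_c$; the standard repair is to work with nested neighborhoods $K_c\subset N\subset C\subset U$ of the critical set, apply the deformation lemma to $F^{c+\varepsilon}\smallsetminus C$, and transport the sections over the cover of $F^{c-\varepsilon}$ back along the deformation via the homotopy lifting property, which yields a relatively open cover of $F^{c+\varepsilon}$ of the required size. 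These two issues are routine to fix; the continuity property is the one that needs a real argument (and, in full honesty, an ANR-type hypothesis on $X$ or on the critical sets) before your proof can be considered complete.
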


In the light of Remark \ref{RemarkSC}.(1), let us focus on the case of $n=0$, i.e. of $\tc$. If we identify $S_0 X \approx X \times X$ and assume that $X$ is a smooth manifold, then we derive for $F:X \times X \to \R$ from Theorem \ref{TheoremSC} that 
$$\tc_M(F^a) \leq  \sum_{c \in (-\infty,a]} (\tc_M(\mathrm{Crit}\, F \cap F_c)+1)-1. $$
Comparing this to Theorem \ref{TheoremTCNavigation} shows that this indeed generalizes said theorem to  manifolds which are not necessarily closed. Moreover, if $F$ is non-negative and satisfies $F_0=\Delta_X$, then one obtains using Lemma \ref{LemmaGeod} that 
$$\tc_M(F^a) \leq  \sum_{c \in (0,a]} (\tc_M(\mathrm{Crit}\, F \cap F_c)+1), $$
which generalizes Theorem \ref{TheoremTCNavigation} in a straightforward way to functions which are not necessarily Morse-Bott, but satisfy the Palais-Smale condition, giving rise to a possible generalization of the class of navigation functions. The downside is that a detailed proof of this general inequality is technically more involved than the proof of Theorem \ref{TheoremTCNavigation} presented in this survey. \medskip 

For the rest of this subsection, we want to focus on the case of $n=1$, for which we assume that $X=M$ is a closed manifold and consider $\mathcal{M}=\Lambda_0M :=\{\alpha \in \Lambda M \ | \ \alpha \text{ is contractible}\}$. Since $\Lambda_0M$ is a connected component of $\Lambda M$, it is itself a Hilbert manifold. For all $A \subset \Lambda_0M$ we further put ${\sf {SC}}_M(A) := {\sf {SC}}_{1,M}(A)$; The following observation that is proven by elementary methods is key to the application of Theorem \ref{TheoremSC} to energy functionals. 

\begin{lemma}[{\cite[Propositions 1.4 and 3.1]{MescherSC}}]Let $M$ be a closed smooth manifold.
	\begin{enumerate}[a)]
		\item Let $c(M) \subset \Lambda_0M$ be the subset of constant loops. Then ${\sf {SC}}_M(c(M))=1$. 
		\item Let $O(2)\cdot \gamma$ be the orbit of a loop $\gamma$ under the $O(2)$-action by reparametrization. Then ${\sf {SC}}_M(O(2) \cdot \gamma)=1$ for each $\gamma \in \Lambda_0 M$. 
	\end{enumerate}
\end{lemma}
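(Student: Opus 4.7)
The plan is to establish both assertions by constructing continuous local sections of the restriction of the fibration $r_1 : C^0(B^2, M) \to L_0 M$, $r_1(f) = f|_{S^1}$, to the respective subspace. In part~(a) this will be done by a single global section, while in part~(b) a cover by at most two open sets with continuous sections will be exhibited.

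For part~(a), I would define $\sigma : c(M) \to C^0(B^2, M)$ by assigning to each constant loop $c_x$ the constant disk $z \mapsto x$. The identity $\sigma(c_x)|_{S^1} = c_x$ is immediate, and continuity follows from the exponential law applied to the evidently continuous evaluation map $c(M) \times B^2 \to M$, $(c_x, z) \mapsto x$. This exhibits a continuous section over the whole of $c(M)$, which suffices for the claimed bound.

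For part~(b), I would first identify $O(2) \cdot \gamma \cong O(2)/H$, where $H = \mathrm{Stab}_{O(2)}(\gamma)$ is a closed subgroup of the compact Lie group $O(2)$. The possibilities for $H$ are the trivial group, the finite cyclic groups $\Z_n$, the finite dihedral groups $D_n$, the identity component $SO(2)$, and $O(2)$ itself, so the quotient $O(2)/H$ is always a compact smooth manifold whose connected components are each either a point or a circle, with at most two such components in total. Fixing a disk filling $D_0 : B^2 \to M$ of $\gamma$, which exists because $\gamma \in \Lambda_0 M$ is contractible, one obtains local sections over any contractible open $U \subset O(2) \cdot \gamma$ as follows: lift the principal $H$-bundle projection $O(2) \to O(2)/H$ continuously over $U$ to a map $t_U : U \to O(2)$, and set $\sigma_U(y) := D_0 \circ t_U(y)^{-1}$, where $O(2)$ acts on $B^2$ via its standard linear action. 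A routine check using the defining relation $t_U(y)\cdot\gamma = y$ and the $O(2)$-equivariance of $r_1$ confirms that $\sigma_U$ is a section. Each circle component of the orbit is covered by two open contractible arcs; by grouping one arc from each component into a single open set, one produces a cover of $O(2) \cdot \gamma$ by (at most) two open sets, each admitting a continuous section as the disjoint union of the corresponding component-wise local sections.

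The main obstacle in part~(b) is that the tempting global formula $\sigma(\phi \cdot \gamma) := D_0 \circ \phi^{-1}$ is well-defined only if $D_0$ is chosen to be $H$-invariant, which is not possible in general. For example, on $\R P^2$ the loop $\gamma = \tilde\gamma^2$, where $\tilde\gamma$ generates $\pi_1(\R P^2)$, is nullhomotopic but is invariant under the $\Z_2$-rotation of $S^1$, while any $\Z_2$-invariant disk filling of $\gamma$ would descend to a disk filling of $\tilde\gamma$ itself, contradicting the nontriviality of $\tilde\gamma$. One is therefore forced to work with local sections produced from non-canonical lifts of the bundle $O(2) \to O(2)/H$, and the key topological observation that the open sets in a sectional-category cover need not be connected is what permits the two component-wise covers to be merged into a single two-open-set cover.
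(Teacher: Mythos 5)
The survey itself contains no proof of this lemma (it is quoted from \cite{MescherSC}), so your argument can only be judged on its own terms; its two constructions are correct and are surely the intended elementary ones. The constant-disk assignment is a continuous global section over $c(M)$, and over an orbit $O(2)\cdot\gamma\cong O(2)/H$ your locally defined sections $\sigma_U(y)=D_0\circ t_U(y)^{-1}$ do work: any subset of the orbit that is contractible, such as your arcs, admits a section because its inclusion into $L_0M$ is nullhomotopic and $r_1$ is a fibration with nonempty fibres over $L_0M$. Pairing one arc from each of the at most two components then yields a cover of the orbit by two open sets, each admitting a continuous section. Your $\R P^2$ example is also exactly the right warning about the naive formula $\phi\cdot\gamma\mapsto D_0\circ\phi^{-1}$.

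What your argument does not, and cannot, establish are the equalities ``$=1$'' as printed, and you should say so explicitly. In the reduced convention of Definition~\ref{DefSC}, part a) of your proof shows ${\sf SC}_M(c(M))=0$, and part b) shows ${\sf SC}_M(O(2)\cdot\gamma)\leq 1$, with equality failing, e.g., for constant loops or for orbits with trivial stabilizer, where a single filling transported by the free action already gives a global section. The printed value $1$ in a) is the unreduced value of \cite{MescherSC} (see the remark preceding Theorem~\ref{TheoremSC}); but read unreduced, the equality in b) would assert a global section over \emph{every} orbit, which is false. Indeed, your $\R P^2$ example can be upgraded from ``no $H$-invariant filling'' (which only rules out the naive formula, not an arbitrary section) to ``no section at all over the circle $SO(2)\cdot\gamma$'': the monodromy around that circle acts on $\pi_0$ of the fibre, a torsor over $\pi_2(\R P^2)\cong\Z$, by an affine map $k\mapsto -k+b$, where the sign comes from the basepoint tracing the noncontractible loop $\tilde\gamma$, which acts on $\pi_2$ by $-1$, and where the translation part $b$ is the difference class of a filling and its half-turn rotate, which one computes to be odd; hence no component is fixed and no section exists. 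So the defensible content, in either convention, is exactly the pair of bounds you prove -- a global section over $c(M)$ and a cover of each orbit by two sets admitting sections -- and part b) should be stated as an upper bound (sharp for iterated loops whose underlying prime loop is noncontractible, by the monodromy argument above), not as an equality valid for every $\gamma\in\Lambda_0M$.
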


If we apply this lemma and Theorem \ref{TheoremSC} in the case of $n=1$ to the energy functional of Riemannian manifolds, we conclude that spherical complexities can indeed be used to \emph{count the number of $O(2)$-orbits in $\mathrm{Crit}\, E_g$} that are contained in a sublevel set.

\begin{cor}
\label{CorClosedGeod}
Let $(M,g)$ be a closed Riemannian manifold. Given $a \in \R$, let $N(g,a)$ denote the number of geometrically distinct contractible closed geodesics in $E_g^a$. Then
$$N(g,a) \geq {\sf {SC}}_{M}(E_g^a\cap \Lambda_0M).$$
\end{cor}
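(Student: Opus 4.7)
The plan is to apply Theorem \ref{TheoremSC} to the energy functional $E_g\colon \Lambda_0 M \to \R$. The first step is to verify the hypotheses: on the Hilbert manifold $\Lambda_0 M$, the functional $E_g$ is continuously differentiable, bounded below by zero, and satisfies the Palais--Smale condition with respect to the natural complete Riemannian metric, as is classical in the variational approach to closed geodesics. Taking $n=1$, $X=M$, $\mathcal{M}=\Lambda_0 M$, and $F = E_g$ in Theorem \ref{TheoremSC} gives
$${\sf {SC}}_M(E_g^a \cap \Lambda_0 M) \;\leq\; \sum_{c} \bigl({\sf {SC}}_M(\mathrm{Crit}\, E_g \cap (E_g)_c) + 1\bigr) - 1,$$
where the sum ranges over the critical values of $E_g$ in $(-\infty,a]$, finite in number by Palais--Smale.

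The second step is to analyse each term on the right-hand side via the preceding lemma. The only critical value in $(-\infty,0]$ is $c = 0$, at which the critical set is the submanifold of constant loops $c(M)$, and part (a) of the lemma yields ${\sf {SC}}_M(c(M)) = 1$. For each positive critical value $c$, the set $\mathrm{Crit}\, E_g \cap (E_g)_c$ decomposes as a disjoint union of $O(2)$-orbits, one for each contractible closed geodesic of energy $c$ up to reparametrization, and part (b) of the lemma gives ${\sf {SC}}_M = 1$ for each such orbit. A merging argument analogous to the one in the proof of Theorem \ref{TheoremTCNavigation}---which uses the Palais--Smale condition to guarantee that distinct critical $O(2)$-orbits admit pairwise disjoint neighborhoods, so that local sections can be concatenated into a single continuous one---is then used to combine the contributions of different orbits at the same level.

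The third step is the combinatorial bookkeeping: summing the resulting bounds across all critical values in $(-\infty,a]$, absorbing the constant-loop contribution together with the global $-1$ appearing in Theorem \ref{TheoremSC}, and verifying that the total matches $N(g,a)$. I expect this final bookkeeping to be the main obstacle, since the identification requires both the correct counting of geometrically distinct geodesics (including their iterates when these lie in $E_g^a$) and the verification that the merging trick goes through in the spherical-complexity setting exactly as it does for topological complexity. Once both points are established, the inequality $N(g,a) \geq {\sf {SC}}_M(E_g^a \cap \Lambda_0 M)$ follows directly from Theorem \ref{TheoremSC} and the preceding lemma.
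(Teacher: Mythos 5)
Your overall strategy is the same as the one the survey sketches: apply Theorem \ref{TheoremSC} with $n=1$ to $F=E_g$ on $\mathcal{M}=\Lambda_0M$ and feed in the lemma on ${\sf {SC}}_M(c(M))$ and ${\sf {SC}}_M(O(2)\cdot\gamma)$. The parts you do carry out are fine (the merging of distinct orbits at a fixed critical level is even easier than you suggest: finitely many disjoint compact orbits are clopen in the critical level set, so sections glue and no stable-manifold closure argument is needed; and the finiteness of critical values below $a$ is not a consequence of Palais--Smale, but if it fails then $N(g,a)=\infty$ and there is nothing to prove). The problem is that the step you defer as ``the main obstacle'' is exactly where the argument, as you have set it up, does not close. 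Theorem \ref{TheoremSC} is a level-by-level bound: with the correct (reduced) values ${\sf {SC}}_M(c(M))=0$ and ${\sf {SC}}_M(O(2)\cdot\gamma)=0$ it yields
$${\sf {SC}}_M(E_g^a\cap\Lambda_0M)\ \le\ \#\{\text{critical values of } E_g|_{\Lambda_0M}\text{ in }(0,a]\},$$
i.e.\ a bound by the number of critical levels, equivalently by (at most) the number of critical $O(2)$-orbits. But a single geometrically distinct geodesic $\gamma$ with $E_g(\gamma)=c$ contributes a critical orbit at every level $m^2c\le a$ through its iterates, so the number of critical values can strictly exceed $N(g,a)$; the passage from ``orbits/levels'' to ``geometrically distinct geodesics'' goes in the wrong direction for the desired inequality. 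No bookkeeping of the level sums alone can repair this: one must either group the critical orbits by geometric class (the union of all iterate-orbits of one prime geodesic still admits a section, but Theorem \ref{TheoremSC} does not allow summing over such cross-level clusters), or restrict the energy range so that iterates are excluded, which is precisely what the length bound $2\ell_g$ accomplishes in the quoted existence theorems. Your proposal names the iterate issue but offers no mechanism for it, so the inequality as stated is not established.

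A second, smaller point you would hit when actually doing the arithmetic: the lemma's values ${\sf {SC}}_M(c(M))=1$ and ${\sf {SC}}_M(O(2)\cdot\gamma)=1$ are quoted from the original (unreduced) convention, whereas Definition \ref{DefSC} and Theorem \ref{TheoremSC} in this survey are reduced. If you substitute the value $1$ literally, each critical level contributes $2$ to the right-hand side and the count cannot match $N(g,a)$ even in the absence of iterates; the statements only mesh after translating the lemma to the reduced convention (both values become $0$, meaning a global section exists over $c(M)$ and over each single orbit). Being explicit about this normalization is necessary before the ``absorbing the constant-loop contribution together with the global $-1$'' step can be made precise.
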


\begin{remark}
We note that analogues of Corollary \ref{CorClosedGeod} and of the following existence results for closed geodesics indeed hold true for \emph{Finsler} metrics. These are generalizations of Riemannian metrics that we shall not elaborate upon in this survey.
\end{remark}

We recall from the beginning of this subsection that the numbers ${\sf {SC}}_M(A)$ are obtained as sectional categories of pullback fibrations, which allows us to establish lower bounds for these numbers using the cohomology rings of the free loop space. This method is carried out in \cite[Section 7]{MescherSC} and in \cite{MescherExistGeod} by studying the Mayer-Vietoris sequence from subsection 1.2 and obtaining classes of sectional category weight two in the present context. We refer to \cite{MescherExistGeod} for the technical details and want to present a result that is derived using spherical complexities. 
  Given two Riemannian metrics $g_1$ and $g_2$ on the same manifold $M$ and $c>0$, we write $$g_1 \leq c \cdot g_2 \quad :\Leftrightarrow \quad  (g_1)_p(v,v) \leq c \cdot (g_2)_p(v,v) \qquad \forall p \in M, \ v \in T_pM.$$

\begin{theorem}[{\cite[Theorems 3.2 and 5.1]{MescherExistGeod}}] Let $(M,g)$ be a closed oriented Riemannian manifold. Let $K_g$ denote the sectional curvature of $g$ and let $\ell_g$ be the length of the shortest non-constant closed geodesic of $g$. If $0 < K_g \leq 1$ and if one of the following holds:
	\begin{enumerate}[(i))] 
	\item  $M=S^{2n}$ with $n \geq 2$ and $g \leq 4g_1$, where  $g_1$ denotes the round metric on $S^n$ of constant curvature $1$,
\item $M=\C P^n$ with $n \geq 3$ and $g \leq 4 g_1$, where $g_1$ denotes the Fubini-Study metric on $M$ of constant curvature $1$, 
	\end{enumerate}
	then $g$ admits two geometrically distinct closed geodesics of length less than $2 \ell_g$.
\end{theorem}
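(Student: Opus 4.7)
My plan is to deduce the existence statement from the Morse-theoretic Corollary \ref{CorClosedGeod} applied to the energy functional $E_g$ on $\Lambda_0 M$. A constant-speed parametrization of a closed geodesic of length $\ell$ is a critical point of $E_g$ with value $\ell^2$, so two geometrically distinct non-constant contractible closed geodesics of length less than $2\ell_g$ correspond to two non-trivial $O(2)$-orbits of critical points of $E_g$ in the sublevel set $E_g^{4\ell_g^2-\delta}\cap\Lambda_0 M$ for a suitably small $\delta>0$. Combining the bookkeeping behind Theorem \ref{TheoremSC} with the facts ${\sf {SC}}_M(c(M))=1$ and ${\sf {SC}}_M(O(2)\cdot\gamma)=1$ from the preceding lemma, it suffices to prove
\[ {\sf {SC}}_M\bigl(E_g^{4\ell_g^2-\delta}\cap\Lambda_0 M\bigr) \geq 3. \]

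To establish this lower bound I would transport the weight-two argument reviewed in subsection 1.2 from the path fibration to the sphere-space fibration $r_1:B_2 M\to L_0 M$. Writing the fiberwise join $r_1*_f r_1$ as a homotopy pushout, the corresponding Mayer-Vietoris sequence identifies the kernel of $(r_1)_2^*$ with the image of a slant product $\mathrm{ev}^*(-)/[S^2]$ applied to classes pulled back from $M$. Feeding this through a rational Thom-representability argument analogous to Theorem \ref{TheoremPullbackWeight}, but with test maps $P\times S^2\to M$ in place of $P\times S^1\to M$, yields a class $\mathfrak u\in H^*(\Lambda_0 M;\Q)$ of sectional category weight at least two with respect to $r_1$: for $M=S^{2n}$ with $n\geq 2$ the class is built from the fundamental-class dual $[S^{2n}]^*$, while for $M=\C P^n$ with $n\geq 3$ it is built from a suitable power of the degree-$2$ generator. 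The dimension bounds are precisely what ensures that the cup products of $\mathfrak u$ with natural zero-divisors on $\Lambda_0 M$ do not vanish, which through the spherical analogue of Theorem \ref{TheoremWeights}.c) produces classes of weight at least three.

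The geometric heart of the argument is showing that these classes remain nontrivial after restriction to $E_g^{4\ell_g^2-\delta}\cap\Lambda_0 M$. This is exactly where the hypotheses $0<K_g\leq 1$ and $g\leq 4g_1$ enter. Klingenberg's injectivity-radius estimate, applicable to even-dimensional orientable simply connected manifolds of positive curvature at most $1$ (together with the Rauch comparison for $\C P^n$), gives $\ell_g\geq \pi$, fixing the order of magnitude of $\ell_g$. The comparison $g\leq 4g_1$ yields $L_g(\alpha)\leq 2\,L_{g_1}(\alpha)$ for every loop $\alpha$, and hence $E_g(\alpha)\leq 4\,E_{g_1}(\alpha)$. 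The explicit families of short closed geodesics for the model metric $g_1$ — great circles on $(S^{2n},g_1)$ of length $2\pi$, projective lines on $(\C P^n,g_1)$ of length $\pi$ — then assemble into a continuous family of $g$-loops whose $g$-energy lies strictly below $4\ell_g^2$. This family should be chosen to realize a homology cycle pairing nontrivially with $\mathfrak u$ (respectively $\mathfrak u\cup\mathfrak u$), certifying nontriviality of the restriction.

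The step I expect to be the main obstacle is verifying this last nonvanishing Kronecker pairing. It demands precise knowledge of the rational cohomology of $\Lambda_0 S^{2n}$ and $\Lambda_0\C P^n$ — obtainable via Sullivan minimal models or Ziller's calculations — coupled with a careful interplay between the weight-two transgression, the curvature pinching and the length comparison. The factor $4$ in $g\leq 4g_1$ is sharp for the scheme: the doubled length squares to exactly the required fourfold energy bound, and any enlargement would push the constructed cycle out of the sublevel set.
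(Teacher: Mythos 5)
Your outline does follow the route that this survey attributes to the cited source: reduce via Corollary \ref{CorClosedGeod} (i.e.\ Theorem \ref{TheoremSC} applied to $E_g$ on $\Lambda_0M$) to a lower bound on ${\sf {SC}}_M$ of an energy sublevel set, produce that lower bound from classes of sectional category weight $\geq 2$ for $r_1$ via the fiberwise-join/Mayer--Vietoris mechanism of Subsection 1.2, and feed in Klingenberg's estimate $\ell_g\geq 2\pi$ and the comparison $g\leq 4g_1$ to place the model families of closed geodesics inside the sublevel set. Note, however, that this survey does not prove the theorem at all; it only sketches the method and refers to the original paper, so the real content lies exactly in the steps you leave open.

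And there is a genuine gap at the decisive point. Your mechanism for producing the weight-two class is a verbatim $S^2$-analogue of Theorem \ref{TheoremPullbackWeight}: take $u=[S^{2n}]^*$ (resp.\ a power $x^j$ of the degree-two generator of $H^*(\C P^n;\Q)$) and demand $f^*u=0$ for all maps $f\colon P\times S^2\to M$ with $P$ a closed oriented manifold of the complementary dimension. This hypothesis is simply false for the classes you name: the collapse map $S^{2n-2}\times S^2\to S^{2n-2}\wedge S^2=S^{2n}$ has degree $\pm 1$, so it pulls back $[S^{2n}]^*$ nontrivially, and the polynomial-multiplication map $\C P^{j-1}\times \C P^1\to \C P^j\subset \C P^n$ pulls back $x$ to $h+s$, whence $f^*(x^j)=j\,h^{j-1}s\neq 0$. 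So, unlike the symplectically atoroidal situation in Section 1, the weight-two property here cannot be an absolute statement about $M$; it can only hold for the fibration restricted over the energy sublevel set, and establishing it there is precisely where the curvature bound $0<K_g\leq 1$ and the comparison $g\leq 4g_1$ must enter analytically -- which is also the step ("the nonvanishing Kronecker pairing") you explicitly defer. In addition, your bookkeeping is off relative to the survey's statements: Corollary \ref{CorClosedGeod} gives $N(g,a)\geq {\sf {SC}}_M(E_g^a\cap\Lambda_0M)$ directly, so one must decide carefully (reduced vs.\ unreduced conventions, and how the orbits ${\sf {SC}}_M(O(2)\cdot\gamma)=1$ and $c(M)$ are counted) what lower bound on ${\sf {SC}}_M$ of the sublevel set is actually needed; asserting ``$\geq 3$'' and then proposing to reach it by multiplying with further zero-divisors is not justified by anything in your sketch. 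As it stands the proposal is a plausible plan whose two load-bearing steps -- the relative weight-two statement over the sublevel set and the nonvanishing of the restricted product -- are missing, and the absolute version you do propose provably fails.
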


We refer to \cite{MescherExistGeod} for further existence results for closed geodesics that are obtained using spherical complexities, e.g. for odd-dimensional spheres, and to its introduction for an overview of other recent existence results.

\bibliography{Survey-Mescher.bib}

\providecommand{\bysame}{\leavevmode\hbox to3em{\hrulefill}\thinspace}
\providecommand{\MR}{\relax\ifhmode\unskip\space\fi MR }
\providecommand{\MRhref}[2]{%
  \href{http://www.ams.org/mathscinet-getitem?mr=#1}{#2}
}
\providecommand{\href}[2]{#2}
\begin{thebibliography}{10}

\bibitem{AMHilbert}
Alberto Abbondandolo and Pietro Majer, \emph{Lectures on the {M}orse complex for infinite-dimensional manifolds}, Morse theoretic methods in nonlinear analysis and in symplectic topology, NATO Sci. Ser. II Math. Phys. Chem., vol. 217, Springer, Dordrecht, 2006, pp.~1--74.

\bibitem{AudinTorus}
Mich\`ele Audin, \emph{Torus actions on symplectic manifolds}, revised ed., Progress in Mathematics, vol.~93, Birkh\"{a}user Verlag, Basel, 2004.

\bibitem{AustinBraam}
D.~M. Austin and P.~J. Braam, \emph{Morse-{B}ott theory and equivariant cohomology}, The {F}loer memorial volume, Progr. Math., vol. 133, Birkh\"{a}user, Basel, 1995, pp.~123--183.

\bibitem{Bartsch}
Thomas Bartsch, \emph{Topological methods for variational problems with symmetries}, Lecture Notes in Mathematics, vol. 1560, Springer-Verlag, Berlin, 1993.

\bibitem{BaryshPerplex}
Yuliy Baryshnikov, \emph{Topological perplexity of feedback stabilization}, J. Appl. Comput. Topol. \textbf{7} (2023), no.~1, 75--87.

\bibitem{BlaszCarras}
Zbigniew B{\l}aszczyk and Jos{\'e} Carrasquel, \emph{Topological complexity and efficiency of motion planning algorithms}, Rev. Mat. Iberoam. \textbf{34} (2018), no.~4, 1679--1684.

\bibitem{Borel}
Armand Borel, \emph{Sur la cohomologie des espaces fibr\'{e}s principaux et des espaces homog\`enes de groupes de {L}ie compacts}, Ann. of Math. (2) \textbf{57} (1953), 115--207.

\bibitem{BoudjajRami}
A.~Boudjaj and Y.~Rami, \emph{On spaces of topological complexity two}, JP J. Geom. Topol. \textbf{19} (2016), no.~4, 383--392.

\bibitem{CLM}
Pietro Capovilla, Clara L\"{o}h, and Marco Moraschini, \emph{Amenable category and complexity}, Algebr. Geom. Topol. \textbf{22} (2022), no.~3, 1417--1459.

\bibitem{ClappPuppe}
M\'{o}nica Clapp and Dieter Puppe, \emph{Invariants of the {L}usternik-{S}chnirelmann type and the topology of critical sets}, Trans. Amer. Math. Soc. \textbf{298} (1986), no.~2, 603--620.

\bibitem{ClappPuppeSymm}
\bysame, \emph{Critical point theory with symmetries}, J. Reine Angew. Math. \textbf{418} (1991), 1--29.

\bibitem{CVKlein}
Daniel~C. Cohen and Lucile Vandembroucq, \emph{Topological complexity of the {K}lein bottle}, J. Appl. Comput. Topol. \textbf{1} (2017), no.~2, 199--213.

\bibitem{CVconnected}
\bysame, \emph{Motion planning in connected sums of real projective spaces}, Topology Proc. \textbf{54} (2019), 323--334.

\bibitem{CVabelian}
\bysame, \emph{On the topological complexity of manifolds with abelian fundamental group}, Forum Math. \textbf{33} (2021), no.~6, 1403--1421.

\bibitem{CLOT}
Octav Cornea, Gregory Lupton, John Oprea, and Daniel Tanr\'e, \emph{Lusternik-{S}chnirelmann category}, Mathematical Surveys and Monographs, vol. 103, American Mathematical Society, Providence, RI, 2003.

\bibitem{Costa}
Armindo Costa, \emph{Topological complexity of configuration spaces}, Ph.D. thesis, Durham University, 2010.

\bibitem{CostaFarber}
Armindo Costa and Michael Farber, \emph{Motion planning in spaces with small fundamental groups}, Commun. Contemp. Math. \textbf{12} (2010), no.~1, 107--119.

\bibitem{CrabbJames}
Michael Crabb and Ioan James, \emph{Fibrewise homotopy theory}, Springer Monographs in Mathematics, Springer-Verlag London, Ltd., London, 1998.

\bibitem{HigherTCSeifert}
Navnath Daudnkar, Rekha Santhanam, and Soumyadip Thandar, \emph{Higher topological complexity of {S}eifert fibered manifolds}, arXiv:2304.01274, 2023.

\bibitem{delaHarpe}
Pierre de~la Harpe, \emph{Brouwer degree, domination of manifolds and groups presentable by products}, Bulletin of the Manifold Atlas, 2017.

\bibitem{DSconnectedsum}
Alexander Dranishnikov and Rustam Sadykov, \emph{On the {LS}-category and topological complexity of a connected sum}, Proc. Amer. Math. Soc. \textbf{147} (2019), no.~5, 2235--2244.

\bibitem{DKR}
Alexander~N. Dranishnikov, Mikhail~G. Katz, and Yuli~B. Rudyak, \emph{Small values of the {L}usternik-{S}chnirelman category for manifolds}, Geom. Topol. \textbf{12} (2008), no.~3, 1711--1727.

\bibitem{FadellHusseini}
Edward Fadell and Sufian Husseini, \emph{Category weight and {S}teenrod operations}, Bol. Soc. Mat. Mexicana (2) \textbf{37} (1992), no.~1-2, 151--161, Papers in honor of Jos\'e Adem (Spanish).

\bibitem{FarberTC}
Michael Farber, \emph{Topological complexity of motion planning}, Discrete Comput. Geom. \textbf{29} (2003), no.~2, 211--221.

\bibitem{FarberTC2}
\bysame, \emph{Instabilities of robot motion}, Topology Appl. \textbf{140} (2004), no.~2-3, 245--266.

\bibitem{FarberSurveyTC}
\bysame, \emph{Topology of robot motion planning}, Morse theoretic methods in nonlinear analysis and in symplectic topology, NATO Sci. Ser. II Math. Phys. Chem., vol. 217, Springer, Dordrecht, 2006, pp.~185--230.

\bibitem{FarberBook}
\bysame, \emph{Invitation to topological robotics}, Zurich Lectures in Advanced Mathematics, European Mathematical Society (EMS), Z\"urich, 2008.

\bibitem{FarberGrantSymm}
Michael Farber and Mark Grant, \emph{Symmetric motion planning}, Topology and robotics, Contemp. Math., vol. 438, Amer. Math. Soc., Providence, RI, 2007, pp.~85--104.

\bibitem{FTY}
Michael Farber, Serge Tabachnikov, and Sergey Yuzvinsky, \emph{Topological robotics: motion planning in projective spaces}, Int. Math. Res. Not. (2003), no.~34, 1853--1870.

\bibitem{FOT}
Yves F\'elix, John Oprea, and Daniel Tanr\'e, \emph{Algebraic models in geometry}, Oxford Graduate Texts in Mathematics, vol.~17, Oxford University Press, Oxford, 2008.

\bibitem{GHL}
Sylvestre Gallot, Dominique Hulin, and Jacques Lafontaine, \emph{Riemannian geometry}, third ed., Universitext, Springer-Verlag, Berlin, 2004.

\bibitem{GluckSinger}
Herman Gluck and David Singer, \emph{Scattering of geodesic fields. {I}}, Ann. of Math. (2) \textbf{108} (1978), no.~2, 347--372.

\bibitem{Goubault}
Eric Goubault, Michael Farber, and Aur\'{e}lien Sagnier, \emph{Directed topological complexity}, J. Appl. Comput. Topol. \textbf{4} (2020), no.~1, 11--27.

\bibitem{GrantTCfibr}
Mark Grant, \emph{Topological complexity, fibrations and symmetry}, Topology Appl. \textbf{159} (2012), no.~1, 88--97.

\bibitem{GLO}
Mark Grant, Gregory Lupton, and John Oprea, \emph{Spaces of topological complexity one}, Homology Homotopy Appl. \textbf{15} (2013), no.~2, 73--81.

\bibitem{GrantMescher}
Mark Grant and Stephan Mescher, \emph{Topological complexity of symplectic manifolds}, Math. Z. \textbf{295} (2020), no.~1-2, 667--679.

\bibitem{GromollMeyer}
Detlef Gromoll and Wolfgang Meyer, \emph{Periodic geodesics on compact riemannian manifolds}, J. Differential Geometry \textbf{3} (1969), 493--510.

\bibitem{HallLie}
Brian Hall, \emph{Lie groups, {L}ie algebras, and representations}, second ed., Graduate Texts in Mathematics, vol. 222, Springer, Cham, 2015.

\bibitem{HaugPavesic}
Edward~J. Haug and Petar Pave\v{s}i\'{c}, \emph{Mechanical system kinematics and dynamics on differentiable manifolds}, Topology, IntechOpen, Rijeka, 2023.

\bibitem{ItohSakai}
Jin-Ichi Itoh and Takashi Sakai, \emph{Cut loci and distance functions}, Math. J. Okayama Univ. \textbf{49} (2007), 65--92.

\bibitem{KapoMillson}
Michael Kapovich and John~J. Millson, \emph{Universality theorems for configuration spaces of planar linkages}, Topology \textbf{41} (2002), no.~6, 1051--1107.

\bibitem{KKM}
J.~Kedra, D.~Kotschick, and S.~Morita, \emph{Crossed flux homomorphisms and vanishing theorems for flux groups}, Geom. Funct. Anal. \textbf{16} (2006), no.~6, 1246--1273.

\bibitem{KoditschekRimon}
Daniel~E. Koditschek and Elon Rimon, \emph{Robot navigation functions on manifolds with boundary}, Adv. in Appl. Math. \textbf{11} (1990), no.~4, 412--442.

\bibitem{KotschickLoeh}
D.~Kotschick and C.~L\"{o}h, \emph{Fundamental classes not representable by products}, J. Lond. Math. Soc. (2) \textbf{79} (2009), no.~3, 545--561.

\bibitem{KotschickNeofyt}
D.~Kotschick and C.~Neofytidis, \emph{On three-manifolds dominated by circle bundles}, Math. Z. \textbf{274} (2013), no.~1-2, 21--32.

\bibitem{LeeSmooth}
John~M. Lee, \emph{Introduction to smooth manifolds}, second ed., Graduate Texts in Mathematics, vol. 218, Springer, New York, 2013.

\bibitem{LeeRiem}
\bysame, \emph{Introduction to {R}iemannian manifolds}, second ed., Graduate Texts in Mathematics, vol. 176, Springer, Cham, 2018.

\bibitem{LuptonOpreaCsymp}
Gregory Lupton and John Oprea, \emph{Cohomologically symplectic spaces: toral actions and the {G}ottlieb group}, Trans. Amer. Math. Soc. \textbf{347} (1995), no.~1, 261--288.

\bibitem{LSbook}
Lazar Lusternik and Lev Schnirelmann, \emph{M{\'e}thodes topologiques dans les probl{\`e}mes variationnels. {I}. {Pt}. {Espaces} {\`a} un nombre fini de dimensions.}, Actualit{\'e}s {Scientifiques} et {Industrielles}. 188. {Paris}: {Hermann} \& {Cie}. 51 {S}., 5 {Fig}. (1934)., 1934.

\bibitem{LynchPark}
{Kevin M.} Lynch and {Frank C.} Park, \emph{Modern robotics: Mechanics, planning, and control}, Cambridge Univeristy Press, 2017.

\bibitem{MVMLhomdist}
Enrique Mac\'{\i}as-Virg\'{o}s and David Mosquera-Lois, \emph{Homotopic distance between maps}, Math. Proc. Cambridge Philos. Soc. \textbf{172} (2022), no.~1, 73--93.

\bibitem{MVMLgeneralized}
Enrique Mac\'{\i}as-Virg\'{o}s, David Mosquera-Lois, and Mar\'{\i}a~Jos\'{e} Pereira-S\'{a}ez, \emph{Homotopic distance and generalized motion planning}, Mediterr. J. Math. \textbf{19} (2022), no.~6.

\bibitem{Marz}
Wac\l{}aw Marzantowicz, \emph{A {$G$}-{L}usternik-{S}chnirelman category of space with an action of a compact {L}ie group}, Topology \textbf{28} (1989), no.~4, 403--412.

\bibitem{TCframe}
Stephan Mescher, \emph{Oriented robot motion planning in {R}iemannian manifolds}, Topology Appl. \textbf{258} (2019), 1--20.

\bibitem{MescherExistGeod}
\bysame, \emph{Existence results for closed {F}insler geodesics via spherical complexities}, Calc. Var. Partial Differential Equations \textbf{59} (2020), no.~5, Paper No. 155, 14.

\bibitem{MescherSC}
\bysame, \emph{Spherical complexities with applications to closed geodesics}, Algebr. Geom. Topol. \textbf{21} (2021), no.~2, 1021--1074.

\bibitem{MSGC}
Stephan Mescher and Maximilian Stegemeyer, \emph{Geodesic complexity of homogeneous {R}iemannian manifolds}, Algebr. Geom. Topol. \textbf{23} (2023), no.~5, 2221--2270.

\bibitem{GCfiber}
\bysame, \emph{Geodesic complexity via fibered decompositions of cut loci}, J. Appl. Comput. Topol. \textbf{7} (2023), no.~3, 397--425.

\bibitem{MilMorse}
John~W. Milnor, \emph{Morse theory}, Based on lecture notes by M. Spivak and R. Wells. Annals of Mathematics Studies, No. 51, Princeton University Press, Princeton, N.J., 1963.

\bibitem{MooreGlobal}
John~Douglas Moore, \emph{Introduction to global analysis}, Graduate Studies in Mathematics, vol. 187, American Mathematical Society, Providence, RI, 2017, Minimal surfaces in Riemannian manifolds.

\bibitem{MorseCalcVarLarge}
Marston Morse, \emph{The calculus of variations in the large}, Colloq. Publ., Am. Math. Soc., vol.~18, American Mathematical Society (AMS), Providence, RI, 1934.

\bibitem{Neofyt}
Christoforos Neofytidis, \emph{Topological complexity, asphericity and connected sums}, arXiv:2212.08962, 2022.

\bibitem{NicolaescuMorse}
Liviu Nicolaescu, \emph{An invitation to {M}orse theory}, second ed., Universitext, Springer, New York, 2011.

\bibitem{Oancea}
Alexandru Oancea, \emph{Morse theory, closed geodesics, and the homology of free loop spaces}, Free loop spaces in geometry and topology, IRMA Lect. Math. Theor. Phys., vol.~24, Eur. Math. Soc., Z\"{u}rich, 2015, pp.~67--109.

\bibitem{OpreaWalsh}
John Oprea and John Walsh, \emph{Quotient maps, group actions and {L}usternik-{S}chnirelmann category}, Topology Appl. \textbf{117} (2002), no.~3, 285--305.

\bibitem{Orita}
Ryuma Orita, \emph{Topological complexity of monotone symplectic manifolds}, arXiv:2311.14989, 2023.

\bibitem{PalaisHilbert}
Richard~S. Palais, \emph{Morse theory on {H}ilbert manifolds}, Topology \textbf{2} (1963), 299--340.

\bibitem{PalaisLuster}
\bysame, \emph{Lusternik-{S}chnirelman theory on {B}anach manifolds}, Topology \textbf{5} (1966), 115--132.

\bibitem{PavesicManif}
Petar Pave\v{s}i\'{c}, \emph{Manifolds with small topological complexity}, arXiv:2011.13754, to appear in Algebr. Geom. Topol., 2020.

\bibitem{Petersen}
Peter Petersen, \emph{Riemannian geometry}, third ed., Graduate Texts in Mathematics, vol. 171, Springer, Cham, 2016.

\bibitem{Pfalzgraf}
Jochen Pfalzgraf, \emph{On geometric and topological reasoning in robotics}, Ann. Math. Artificial Intelligence \textbf{19} (1997), no.~3-4, 279--318.

\bibitem{Radovanovic}
Marko Radovanovi{\'c}, \emph{On the topological complexity and zero-divisor cup-length of real {Grassmannians}}, Proc. R. Soc. Edinb., Sect. A, Math. \textbf{153} (2023), no.~2, 702--717.

\bibitem{RecioMitter}
David Recio-Mitter, \emph{Geodesic complexity of motion planning}, J. Appl. Comput. Topol. \textbf{5} (2021), no.~1, 141--178.

\bibitem{RimonKoditStar}
Elon Rimon and Daniel~E. Koditschek, \emph{The construction of analytic diffeomorphisms for exact robot navigation on star worlds}, Trans. Amer. Math. Soc. \textbf{327} (1991), no.~1, 71--116.

\bibitem{RudyakWeight}
Yuli~B. Rudyak, \emph{On category weight and its applications}, Topology \textbf{38} (1999), no.~1, 37--55.

\bibitem{RudyakOprea}
Yuli~B. Rudyak and John Oprea, \emph{On the {L}usternik-{S}chnirelmann category of symplectic manifolds and the {A}rnold conjecture}, Math. Z. \textbf{230} (1999), no.~4, 673--678.

\bibitem{RudyakSchlenk}
Yuli~B. Rudyak and Felix Schlenk, \emph{Lusternik-{S}chnirelmann theory for fixed points of maps}, Topol. Methods Nonlinear Anal. \textbf{21} (2003), no.~1, 171--194.

\bibitem{Sakai1}
Takashi Sakai, \emph{On the structure of cut loci in compact {R}iemannian symmetric spaces}, Math. Ann. \textbf{235} (1978), no.~2, 129--148.

\bibitem{Sakai2}
\bysame, \emph{Cut loci of compact symmetric spaces}, Minimal submanifolds and geodesics ({P}roc. {J}apan-{U}nited {S}tates {S}em., {T}okyo, 1977), North-Holland, Amsterdam-New York, 1979, pp.~193--207.

\bibitem{SinghofLieI}
Wilhelm Singhof, \emph{On the {L}usternik-{S}chnirelmann category of {L}ie groups}, Math. Z. \textbf{145} (1975), no.~2, 111--116.

\bibitem{Spanier}
Edwin~H. Spanier, \emph{Algebraic topology}, McGraw-Hill Book Co., New York-Toronto-London, 1966.

\bibitem{Struwe}
Michael Struwe, \emph{Variational methods}, fourth ed., Ergebnisse der Mathematik und ihrer Grenzgebiete. 3. Folge. A Series of Modern Surveys in Mathematics, vol.~34, Springer-Verlag, Berlin, 2008.

\bibitem{ThomQuelques}
Ren\'{e} Thom, \emph{Quelques propri\'{e}t\'{e}s globales des vari\'{e}t\'{e}s diff\'{e}rentiables}, Comment. Math. Helv. \textbf{28} (1954), 17--86.

\end{thebibliography}
 \bibliographystyle{amsplain}

\end{document}